\pdfoutput=1
\RequirePackage{ifpdf}
\ifpdf % We are running pdfTeX in pdf mode
\documentclass[pdftex]{sigma}
\else
\documentclass{sigma}
\fi

\numberwithin{equation}{section}

\newtheorem{Theorem}{Theorem}[section]

\newtheorem{Lemma}[Theorem]{Lemma}
\newtheorem{Proposition}[Theorem]{Proposition}
 { \theoremstyle{definition}

\newtheorem{Remark}[Theorem]{Remark} }

\usepackage{eucal,bbold,bbm}

\makeatletter
\def\amsbb{\use@mathgroup \M@U \symAMSb}
\makeatother

\newcommand{\dd}{\operatorname{d}}
\newcommand{\e}{{\mbox{\rm e}}}
\newcommand{\mb}[1]{{\boldsymbol{#1}}}% mathematical bold
\newcommand{\mc}[1]{{\mathcal{#1}}}% mathematical caligraphic
\newcommand{\mr}[1]{{\mathrm{#1}}}% mathematical roman
\newcommand{\got}[1]{{\mathfrak{#1}}}% gothic with mbox for mathematic
\newcommand{\db}[1]{{\amsbb{#1}}}% doubles
\newcommand{\pa}{\partial}
\newcommand{\un}{{\mathbbm{1}}_n}
\newcommand{\R}{{\amsbb{R}}}
\newcommand{\C}{{\amsbb{C}}}
\newcommand{\N}{{\amsbb{N}}}

\newcommand{\Hi}{{\got{H}}}% Hilbert space

\def\ii{{\rm i}}
\newcommand{\tr}{\operatorname{Tr}}
\newcommand{\h}{{\got{h}}}% Lie algebra h
% Lie algebra m

% Lie algebra g203

\newcommand{\Hinf}{{\mathcal{H}^{\infty}}}% \infty Hilbert space

\renewcommand{\P}{{\amsbb{P}}}
% Projective of Hilbert
\newcommand{\Phinf}{{\P (\Hinf )}}% Projective of Hilbert % infty

\newcommand{\zn}{{\mathbb{0}}_n}

\newcommand{\gl}{{\mc{L}}}

\begin{document}

\allowdisplaybreaks

\newcommand{\arXivNumber}{1512.00601}

\renewcommand{\PaperNumber}{064}

\FirstPageHeading

\ShortArticleName{Balanced Metric and Berezin Quantization on the Siegel--Jacobi Ball}

\ArticleName{Balanced Metric and Berezin Quantization\\ on the Siegel--Jacobi Ball}

\Author{Stefan BERCEANU}
\AuthorNameForHeading{S.~Berceanu}
\Address{National Institute for Physics and Nuclear Engineering, Department of Theoretical Physics,\\
 PO BOX MG-6, Bucharest-Magurele, Romania}
\Email{\href{mailto: Berceanu@theory.nipne.ro}{Berceanu@theory.nipne.ro}}
\URLaddress{\url{http://www.theory.nipne.ro/index.php/mcp-home}}

\ArticleDates{Received March 03, 2016, in f\/inal form June 17, 2016; Published online June 27, 2016}

\Abstract{We determine the matrix of the balanced metric of the Siegel--Jacobi ball and its inverse. We calculate the scalar curvature, the Ricci form and the Laplace--Beltrami operator of this manifold. We discuss several geometric aspects related with Berezin quantization on the Siegel--Jacobi ball.}

\Keywords{Jacobi group; Siegel--Jacobi ball; balanced metric; homogenous K\"ahler manifolds; Laplace--Beltrami operator; scalar curvature; Ricci
form; Berezin quantization}

\Classification{32Q15; 81S10; 53D50; 57Q35; 81R30}

\section{Introduction}\label{intro}

The Jacobi groups are semidirect products of appropriate semisimple real algebraic groups of Hermitian type with Heisenberg groups \cite{bs,ez,LEE03,TA99}. As unimodular, nonreductive, algebraic groups of Harish-Chandra type~\cite{SA80}, the Jacobi groups are intensively studied in mathema\-tics~\cite{gem, bb,TA90,TA92,TA99,Y02,Y08,Y10}. The Siegel--Jacobi (partially bounded~\cite{Y08,Y10}) domains are homogeneous manifolds associated to the Jacobi groups by the generalized Harish-Chandra embedding. It was underlined~\cite{csg,SB14,berr} that the Siegel--Jacobi disk is a~nonsymmetric, reductive~\cite{nomizu}, quantizable~\cite{Cah}, Q-K.~Lu K\"ahler manifold~\cite{lu66}, not Einstein with respect to the balanced metric~\cite{arr, don}, with constant negative scalar curvature. In the present paper we investigate similar geometric properties for the Siegel--Jacobi ball.

As was already emphasized \cite{jac1,sbj,nou}, the Jacobi group is relevant in several branches of physics, as: quantum mechanics, geometric quantization, quantum optics, nuclear structure, signal processing~\cite{BERC08B, KRSAR82,Q90,SH03}. In particular, the Jacobi group is responsible~\cite{BERC09} for the squeezed states in quantum optics~\cite{mandel}. Recently, new signif\/icant applications of the Jacobi group have been underlined in various domains as: quantum tomography~\cite{marmo}, quantum teleportation~\cite{chi}, Vlasov kinetic equation~\cite{gbt}, general symmetry methods for analyzing solutions of dif\/ferential equations using Lie's prolongation method~\cite{hunz, sep}, establishing the link between the standard Gaussian distribution and the Siegel--Jacobi space~\cite{mol}. Berezin's quantization and Berezin symbols related to the Jacobi group have been also investigated~\cite{ca1,ca2}.

The real Jacobi group of index $n$ is def\/ined as $G^J_n(\R )=\mr{H}_n(\R)\rtimes\operatorname{Sp}(n,\R) $, where $\mr{H}_n(\R)$ is the real Heisenberg group of real dimension $(2n+1)$ \cite{nou, Y02}. Let $g=(M,X,k)$, $g'=(M',X',k')\in G^J_n(\R )$, where $X=(\lambda,\mu)\in\R^{2n}$ and $(X,k)\in \mr{H}_n(\R)$. Then the composition law in $G^J_n(\R )$ is
\begin{gather*}%\label{clawr}
gg'=\big(MM',XM'+ X', k+k'+XM'JX'^t\big),
\end{gather*}
where
\begin{gather*} J=\left(\begin{matrix} \zn & \un\\-\un &
 \zn \end{matrix}\right) .\end{gather*}
Also it is considered the restricted real Jacobi group $G^J_n(\R )_0$, consisting only of elements of the form above, but $g=(M,X)$.

To the Jacobi group $G^J_n(\R )$ it is associated the homogeneous manifold~-- the Siegel--Jacobi upper half-plane~-- $\mc{X}^J_n\approx\C^n \times
\mc{X}_n$, where the Siegel upper half-plane $\mc{X}_n= \mr{Sp}(n,\R)/\mr{U}(n)$ is realized as
\begin{gather*}\mc{X}_n:=\{V\in M(n,\C)\,| \,V=S+\ii R, \, S,
R\in M(n,\R),\, R>0, \,
 S^t=S, \, R^t=R\} . \end{gather*}

The (complex version of the) Jacobi group of index $n$ is def\/ined as $G^J_n=\mr{H}_n\rtimes\operatorname{Sp}(n,\R)_{\C}$, where $\mr{H}_n$ denotes the
$(2n+1)$-dimensional Heisenberg group \cite{sbj,nou, Y02}. The composition law is
\begin{gather*}%\label{compositieX}
(g_1,\alpha_1,t_1)(g_2,\alpha_2, t_2)= \big(g_1 g_2, g_2^{-1}\times \alpha_1+\alpha_2, t_1+ t_2 +\Im \big(g^{-1}_2\times\alpha_1\bar{\alpha}_2\big)\big),
\end{gather*}
where $\alpha_i \in\C^n$, $t_i\in\R$ and $g_i\in\operatorname{Sp}(n,\R)_{\C}$, $i=1,2$ have the form
\begin{gather}\label{dgM}
g = \left( \begin{matrix}p & q\\ \bar{q} & \bar{p}\end{matrix}\right), \qquad p,q\in M(n,\C),
\end{gather}
 and $g\times\alpha = p \alpha + q \bar{\alpha} $, and $g^{-1}\times\alpha ={p}^*\alpha -q^t\bar{\alpha}$.

The Siegel--Jacobi ball, denoted $\mc{D}^J_n$ \cite{sbj}, is the homogeneous manifold associated with the Jacobi group $G^J_n $, whose points are in $\C^n\times\mc{D}_n$, where $\mc{D}_n$ denotes the Siegel (open) ball. The non-compact hermitian symmetric space $ \operatorname{Sp}(n,\R)_{\C}/\operatorname{U}(n)$ admits a matrix realization as a bounded homogeneous domain:
\begin{gather}\label{dn}
\mc{D}_n:=\big\{W\in M (n, \C )\colon W=W^t,\, N>0,\, N:=\un-W\bar{W} \big\}.
\end{gather}
$\mc{D}_n$ is a hermitian symmetric space of type CI (cf.~\cite[Table~V, p.~518]{helg}), identif\/ied with the symmetric bounded domain of type~II, $\got{R}_{\text{II}}$ in Hua's notation~\cite{hua}.

In \cite{JGSP,sbj,nou} we have attached coherent states (CS) \cite{perG} to the Jacobi group $G^J_n$ with support on the Siegel--Jacobi ball $\mc{D}^J_n$. The particular case of coherent states attached to the Jacobi group $G^J_1$ def\/ined on the Siegel--Jacobi disk $\mc{D}^J_1$ has been investigated in~\cite{jac1,csg}. In the present paper we don't use ef\/fectively the coherent states, but we use results obtained in geometry using the coherent state approach. The homogeneous K\"ahler two-form on~$\mc{D}^J_n$, denoted $\omega_{\mc{D}^J_n}$, has been obtained in \cite{sbj,nou} from the scalar product of two
coherent state vectors, and this will be the starting point of the present investigation. The reproducing kernel function for the Siegel--Jacobi
ball was obtained previously, see \cite[p.~532]{neeb} and references there. We recall that in~\cite{jac1} we have shown that when expressed in variables on~$\mc{X}_1$ obtained (via the partial Cayley transform) from variables on $\mc{D}^J_1$, the K\"ahler two-form~$\omega_{\mc{X}^J_1}$ is the one determined by
Berndt~\cite{bern,bs} and K\"ahler~\cite{cal3,cal}. Applying the partial Cayley transform, which we recall later in Proposition~\ref{THETAS}, from $\omega_{\mc{X}^J_n}$ obtained in~\cite{Y07}, it was obtained the homogeneous K\"ahler two-form on the Siegel--Jacobi ball~$\omega_{\mc{D}^J_n} $~-- in fact, Yang considers a Jacobi group more general than~$G^J_n$. In~\cite{nou} we have shown that when expressed in appropriate variables, $\omega_{\mc{D}^J_n}$ is a particular case of that obtained by Yang in~\cite{Y10}. In~\cite{berr} we have underlined that the homogeneous metric corresponding to~$\omega_{\mc{D}^J_1}$ is a balanced metric~\cite{arr, don}. In the present paper it is emphasized that the metric corresponding to $\omega_{\mc{D}^J_n}$~\cite{JGSP,sbj,nou}
is the balanced metric. We also point out several geometric aspects related with Berezin quantization on the Siegel--Jacobi ball.

The paper is laid out as follows. In Section~\ref{PRL1} we recall several notions which are used in the paper. The notion of balanced metric on homogeneous K\"ahler manifolds is brief\/ly recalled in Section~\ref{PRL2}. Also other notions which appear in connection with Berezin quantization, as quantizable manifold, Berezin kernel, CS-type group, Q.-K.~Lu manifold and diastasis are brief\/ly recalled. The homogenous metric on the Siegel upper half plane and Siegel ball, which we need later, are recalled in Section~\ref{PRL3}. We have considered useful to collect in Section~\ref{PRL4} several formulas referring to the dif\/ferential geometry of the Siegel--Jacobi disk~$\mc{D}^J_1$, a guide for the formulas deduced in the present paper for the Siegel--Jacobi ball~$\mc{D}^J_n$. Section~\ref{GMM} contains the original results of the present paper. The balanced metric is obtained from the K\"ahler potential, calculated previously \cite{sbj} using the coherent states. Do to the symmetry of the variables $W\in M(n,\C)$ describing points on the Siegel ball $\mc{D}_n$, we write down the matrix associated with the metric $h_{\mc{D}^J_n}(z,W)$ on the Siegel--Jacobi ball as a four-blocks matrix expressed in $(z,W)\in\C^n\times\mc{D}_n$, but also in a variable $\eta$ related with $(z,W)$ by $\eta=(\un-W\bar{W})^{-1}(z+W\bar{z})$. The signif\/icance of
the change of coordinates ${\rm FC}\colon (\eta,W) \rightarrow (z,W)$ as a homogeneous K\"ahler dif\/feomorphism was underlined in~\cite{nou} in the context of the {\it fundamental conjecture}~\cite{DN} for homogeneous K\"ahler manifolds (Gindikin and Vinberg~\cite{GV}). In order to determine the inverse of the metric matrix attached to the metric~$h_{\mc{D}^J_n}$, we need the ``inverse'' of the metric matrix~$\h^k_{\mc{D}_n}$ of the Siegel ball, presented in Section~\ref{PRL3}~-- the complication comparatively with that on the noncompact Grassmann manifold~\cite{gras} comes from the fact the matrices describing the points on Siegel ball are symmetric~-- see Lemma \ref{CUL}. $(h^k_{\mc{D}_n})^{-1}$ is calculated in Proposition~\ref{LLB},
%in Appendix~2 of Section~\ref{APP2},
where more details on the calculation of the Laplace--Beltrami operator on Siegel ball are given. Theorem~\ref{mainTH} contains the matrix of the metric of the Siegel--Jacobi ball and its ``inverse'' (in the sense of Lemma~\ref{CUL}). Proposition~\ref{geoPL} collects a~description of the geometry of the Siegel--Jacobi ball from the point of view of Berezin's quantization. In Section~\ref{SCR} it is shown that the scalar curvature of the Siegel--Jacobi ball is constant and negative. This is compatible with Theorem~4.1 in~\cite{loi04} which asserts that a~K\"ahler manifold that admits a regular quantization has constant scalar curvature. It is also shown that $\mc{D}^J_n$ is not an Einstein manifold with respect to the balanced metric, but it is one with respect to the Bergman metric. Section~\ref{LBO} contains the Laplace--Beltrami ope\-ra\-tor on~$\mc{D}^J_n$. Remark~\ref{equivv} %of Appendix~1
essentially asserts that if the metric on the homogeneous space in $M = G/H$ is invariant to the action of~$G$ on~$M$, then the same is true for the Laplace--Beltrami operator, a fact used in the proof of Theorem~\ref{mainTH}. In Appendix~\ref{APP1} we reproduce also a~lemma which calculates the invariant volume of the Siegel--Jacobi ball used in~\cite{sbj}. In Appendix~\ref{APP2} already mentioned are presented auxiliary calculations to the paper~\cite{maa} of Maass and a~remark concerning the results obtained in~\cite{hua59} referring to the Laplace--Beltrami operator on the Siegel ball and Siegel upper half plane.

The main new results of the present paper are contained in Remark~\ref{kra}, Lemma~\ref{CUL}, Theo\-rem~\ref{mainTH}, Propositions~\ref{geoPL},~\ref{mainPR} and~\ref{incaun}. They generalize to the Siegel--Jacobi ball the results recalled in Section~\ref{PRL4} obtained for the Siegel--Jacobi disk.

\textbf{Notation.} We denote by $\amsbb{R}$, $\amsbb{C}$, $\amsbb{Z}$, and $\amsbb{N}$ the f\/ield of real numbers, the f\/ield of complex numbers, the ring of integers, and the set of non-negative integers, respectively. We denote the imaginary unit $\sqrt{-1}$ by~$\ii$, and the real and imaginary part of a complex number by $\Re$ and respectively $\Im$, i.e., we have for $z\in\C$, $z=\Re z+\ii \Im z$, and $\bar{z}= cc(z)= \Re z-\ii \Im z$. \mbox{$M(m\times n, \amsbb{F})\approxeq\amsbb{F}^{mn}$} denotes the set of all $m\times n $ matrices with entries in the f\/ield~$\amsbb{F}$. $M(n\times 1,\amsbb{F})$ is
identif\/ied with~$\amsbb{F}^n$. Set $M(n,\amsbb{F}):=M(n\times n,\amsbb{F})$. For any $A\in M_{n}(\amsbb{F})$, $A^{t}$ denotes the transpose matrix of~$A$. For $A\in M_{n}(\amsbb{C})$, $\bar{A}$ denotes the conjugate matrix of $A$ and $A^{*}=\bar{A}^{t}$. For $A\in M_n(\amsbb{C})$, the inequality $A>0$ means that $A$ is positive def\/inite. The identity matrix of degree $n$ is denoted by~$\un$ and~$\zn$ denotes the $M(n,\amsbb{F})$-matrix with all entries $0$. We consider a complex separable Hilbert space~$\got{H}$ endowed with a scalar product which is antilinear in the f\/irst argument, $(\lambda x,y)=\bar{\lambda}(x,y)$, $x,y\in\got{H}$, $\lambda\in\C\setminus 0$. If $A$ is a linear operator, we denote by~$A^{\dagger}$ its adjoint. If~$\pi$ is representation of a Lie group~$G$ on the Hilbert space $\Hi$~and $\got{g}$ is the Lie algebra of~$G$, we denote $\mb{X}:=\dd \pi(X)$ for $X\in\got{g}$. A complex analytic manifold is {\it K\"ahlerian} if it is endowed with a Hermitian metric whose imaginary part $\omega$ has $\dd \omega = 0$ \cite{helg}. We denote the action of a Lie group $G$ on the space~$M$ by $G\times M\rightarrow M$. A coset space $M=G/H$ is {\it homogenous K\"ahlerian} if it caries a K\"ahlerian structure invariant under the group~$G$~\cite{bo}. By a~{\it K\"ahler homogeneous diffeomorphism} we mean a~dif\/feomorphism $\phi\colon M\rightarrow N$ of homogeneous K\"ahler manifolds such that $\phi^*\omega_N=\omega_M$. We use Einstein convention that repeated indices are implicitly summed over. If the K\"ahler-two form has the local expression
\begin{gather}\label{kall}
\omega_M(z)=\ii\sum_{\alpha,\beta=1}^n h_{\alpha\bar{\beta}} (z) \dd z_{\alpha}\wedge
\dd\bar{z}_{\beta}, \qquad h_{\alpha\bar{\beta}}= \bar{h}_{\beta\bar{\alpha}}= h_{\bar{\beta}\alpha},
\end{gather}
 we denote
\begin{gather}\label{corectt}
h^{\alpha\bar{\beta}}:=(h_{\alpha\bar{\beta}})^{-1},
\end{gather}
i.e., we have
\begin{gather}\label{sum2}
h_{\alpha\bar{\epsilon}}h^{\epsilon \bar{\beta}}=\delta_{\alpha\beta}.
\end{gather}
In this paper we use the following expression for the Laplace--Beltrami operator on K\"ahler manifolds $M$ with the K\"ahler two-form~\eqref{kall}:
\begin{gather}\label{LPB}
\Delta_M(z):=\sum_{\alpha,\beta=1}^n(h_{\alpha\bar{\beta}})^{-1}\frac{\pa^2}{\pa \bar{z}_{\alpha}\pa{{z}_{\beta}}},
\end{gather}
cf., e.g., Lemma~3 in the Appendix of~\cite{ber74} or in \cite[equation~(5.2.15), p.~253]{jost}, where the Laplace--Beltrami operator is $-\Delta_M(z)$ and
the author uses the convention $h^{i\bar{j}}h_{k\bar{j}}=\delta_{ij}$ (see \cite[equation~(5.2.14), p.~252]{jost}) instead of our notation~\eqref{corectt},~\eqref{sum2}. This means that $h^{l\bar{k}}$ in Jost's notation corresponds to $h^{k\bar{l}}=(h_{k\bar{l}})^{-1}$ in our notation.

\section{Preliminaries}\label{PRL1}
\subsection{Balanced metric and Berezin quantization via coherent states}\label{PRL2}

We consider a $G$-invariant K\"ahler two-form $\omega_M$ \eqref{kall} on the $2n$-dimensional homogeneous mani\-fold $M=G/H$ derived from the K\"ahler potential $f(z,\bar{z})$ \cite{chern}
\begin{gather}\label{Ter}
h_{\alpha\bar{\beta}}= \frac{\pa^2 f}{\pa {z}_{\alpha}\pa \bar{z}_{{\beta}}} .
\end{gather}

As was underlined in \cite{berr} for $\mc{D}^J_1$, the homogeneous hermitian metric determined in \cite{jac1,sbj,nou} is in fact a balanced metric, because it corresponds to the K\"ahler potential calculated as the product of two CS-vectors
\begin{gather}\label{FK}
f(z,\bar{z})=\ln K_M(z,\bar{z}), \qquad K_M(z,\bar{z})=(e_{\bar{z}},e_{\bar{z}}).
\end{gather}
In the approach of Perelomov \cite{perG} to CS, it is supposed that there exists a continuous, unitary, irreducible representation~$\pi$ of a Lie group $G$
 on the separable complex Hilbert space~$\Hi$. {\em The coherent vector mapping} $\varphi$ is def\/ined locally, on a coordinate neighborhood $\mc{V}_0\subset M$ (cf.~\cite{sb6, last}):
 \begin{gather*}%\label{ECFI}
 \varphi \colon \ M\rightarrow \bar{\Hi}, \qquad \varphi(z)=e_{\bar{z}},
\end{gather*}
where $ \bar{\Hi}$ denotes the Hilbert space conjugate to~$\Hi$. The vectors $e_{\bar{z}}\in\bar{\Hi}$ indexed by the points $z \in M $ are called {\it
Perelomov's CS vectors}. Using Perelomov's CS vectors, we consider Berezin's approach to quantization on K\"ahler manifolds with the supercomplete set of vectors verifying the Parceval overcompletness identity \cite{ber73,ber74,berezin,ber75}
\begin{gather}\label{PAR}
(\psi_1,\psi_2)_{\mc{F}_{K}}=\int_M (\psi_1,e_{\bar{z}}) (e_{\bar{z}},\psi_2) \dd \nu_M(z,\bar{z}), \qquad \psi_1,\psi_2\in\Hi ,\\
%\label{DELNU}
\dd{\nu}_{M}(z,\bar{z})=\frac{\Omega_M(z,\bar{z})}{(e_{\bar{z}},e_{\bar{z}})},
\qquad \Omega_M:=\frac {1}{n!} \underbrace{\omega\wedge\cdots\wedge\omega}_{\text{$n$ times}}.\nonumber
\end{gather}
The reproducing kernel for the Hilbert space of holomorphic, square integrable functions with respect to the scalar product of the type \eqref{PAR} was calculated via CS as the scalar product $K_M(z,\bar{w})=(e_{\bar{z}},e_{\bar{w}})$ \cite{jac1,sbj,nou}.

On the other side, let us consider the weighted Hilbert space $\Hi_f$ of square integrable holomorphic functions on~$M$, with weight $\e^{-f}$ \cite{eng}
\begin{gather}\label{HIF}
\Hi_f=\left\{\phi\in\operatorname{hol}(M) \,| \int_M\e^{-f}|\phi|^2 \Omega_M <\infty \right\}.
\end{gather}

In order to identify the Hilbert space $\Hi_f$ def\/ined by \eqref{HIF} with the Hilbert ${\mc{F}_{K}}$ space with scalar product~\eqref{PAR}, we have to consider the $\epsilon$-function \cite{cah+,Cah,raw}
\begin{gather*}%\label{EPSF}
\epsilon(z) = \e^{-f(z)}K_M(z,\bar{z}).
\end{gather*}
If the K\"ahler metric on the complex manifold $M$ is obtained from the K\"ahler potential via \eqref{kall},~\eqref{Ter} and~\eqref{FK} is such that $\epsilon(z)$ is a positive constant, then the metric is called {\it balanced}. This denomination was f\/irstly used in~\cite{don} for compact manifolds, then
it was used in \cite{arr} for noncompact manifolds and also in~\cite{alo} in the context of Berezin quantization on homogeneous bounded domains.

The {\it balanced hermitian metric} of $M$ in local coordinates is
\begin{gather}\label{herm}
\dd s^2_M(z,\bar{z}) =\sum_{\alpha,\beta=1}^n \frac{\pa^2}{\pa z_{\alpha} \pa\bar{z}_{\beta}} \ln (K_M(z,\bar{z})) \dd z_{\alpha}\otimes \dd\bar{z}_{\beta} ,
\qquad K_M(z,\bar{z})=(e_{\bar{z}},e_{\bar{z}}).
\end{gather}

We recall that in \cite{cah+,Cah,raw} Berezin's quantization via coherent states was globalized and extended to non-homogeneous manifolds in the context of geometric (pre-)quantiza\-tion~\cite{Kos}. To the K\"ahler manifold $(M,\omega)$, it is also attached the triple $\sigma =(\gl,h,\nabla)$, where~$\gl$ is a~holomorphic (prequantum) line bundle on $M$, $h$ is the hermitian metric on~$\gl$ and~$\nabla$ is a connection compatible with metric and the
K\"ahler structure \cite{SBS}. The manifold is called {\it quantizable} if the curvature of the connection~\cite{chern} $F(X,Y)=\nabla_X\nabla_Y-\nabla_Y\nabla_X-\nabla_{[X,Y]}$ has the property that $F=-\ii \omega_M $, or $\pa\bar{\pa} \log \hat{h} =\ii \omega_M$,
where $\hat{h}$ is a local representative of $h$. Then $\omega_M$ is integral, i.e., $c_1[\mc{L}]=[\omega_M]$. The reproducing (weighted Bergman) kernel admits the series expansion
\begin{gather}\label{funck}
K_M(z,\bar{w})\equiv (e_{\bar{z}},e_{\bar{w}}) = \sum_{i=0}^{\infty}\varphi_i(z)\bar{\varphi}_i(w), \end{gather}
where $\Phi= (\varphi_0,\varphi_1,\dots )$ is an orthonormal base with respect to the scalar product \eqref{PAR}:
\begin{gather*}%\label{functphi}
\int_M\bar{\varphi}_i\varphi_j\frac{\Omega_M}{K_M}=\delta_{ij}, \qquad i,j\in \N.
\end{gather*}
The base $\Phi$ is f\/inite-dimensional for compact manifolds~$M$.

Let $\xi\colon \Hi\setminus 0\rightarrow\db{P}(\Hi)$ be the canonical projection $\xi(\mb{z})=[\mb{z}]$. The {\it Fubini--Study metric} in the
nonhomogeneous coordinates~$[z]$ is the hermitian metric on~$\db{CP}^{\infty}$ (see \cite{koba} for details)
\begin{gather}\label{FBST}
\dd s^2|_{\rm FS}(\mb[{z}])= \frac{(\dd\mb{z},\dd\mb{z}) (\mb{z},\mb{z})-(\dd\mb{z},\mb{z}) (\mb{z},\dd\mb{z})}{(\mb{z},\mb{z})^2}.
\end{gather}

It was proved \cite{raw} (see also \cite{berr}) that:
\begin{Remark}\label{asaofi}If $\epsilon(z)$ is constant on $M$, then the balanced Hermitian metric on $M$ is the pullback
\begin{gather}\label{KOL}
\dd s^2_M(z)=\iota_M^*\dd s^2_{\rm FS}(z)= \dd s^2_{\rm FS}(\iota_M(z))
\end{gather}
 of the Fubini--Study metric \eqref{FBST} via the embedding
\begin{gather}\label{invers}
\iota_M\colon \ M\hookrightarrow \db{CP}^{\infty},\qquad \iota_M(z) = [\varphi_0(z):\varphi_1(z):\dots ] .
\end{gather}
If $M$ is a compact manifold, the embedding \eqref{invers} is the Kodaira embedding.
\end{Remark}

If the homogeneous K\"ahler manifold $M=G/H$ to which we associate the Hilbert space of functions $\mc{F}_K$ with respect to the scalar product~\eqref{PAR} admits a holomorphic embedding $\iota_M \colon M \hookrightarrow \Phinf$, then $M$ is called a~CS-{\em orbit}, and $G$ is called a CS-{\it type group}~\cite{sb6, lis,neeb}.

We denote the {\it normalized Bergman kernel} ({\it the two-point function} of $M$ \cite{ber97,SBS}) by
\begin{gather}\label{kmic}
\kappa_M(z,\bar{z}'):=\frac{K_M(z,\bar{z}')}{\sqrt{K_M(z)K_M(z')}}=
(\tilde{e}_{\bar{z}},\tilde{e}_{\bar{z}'})=\frac{(e_{\bar{z}},e_{\bar{z}'})}{\|e_{\bar{z}}\|\|e_{\bar{z}'}\|} .
\end{gather}
The set $\Sigma_z:=\{ z'\in M \,|\,\kappa_M(z,\bar{z}')=0\} $ was called \cite{ber97,SBS} {\it polar divisor} relative to $z\in M$, while a manifold for which
$\Sigma_z=\varnothing$, $\forall\, z\in M$ was called in~\cite{berr} a {\it Q.-K.~Lu manifold}, extending to manifolds a denomination introduced for domains in~$\C^n$~\cite{lu66}. Note that for a particular class of compact homogeneous manifolds that includes the hermitian symmetric spaces, $\Sigma_z$~is equal with the {\it cut locus} relative to $z\in M$ (see the def\/inition of the cut locus, e.g.,~\cite[p.~100]{koba2}), and~$\Sigma_z$ is a {\it divisor} in the sense of algebraic geometry \cite{ber97,SBS}.

By {\it Berezin kernel} $b_M\colon M\times M\rightarrow [0,1]\in \R$, in this paper we mean:
\begin{gather*}%\label{berK}
b_M(z,z'):=|\kappa_M(z,\bar{z}')|^2.
\end{gather*}
The {\it Calabi's diastasis} \cite{calabi1} expressed via the CS-vectors \cite{cah+} reads
\begin{gather}\label{DIA}
D_M(z,z'):= -\ln b_M(z,z') = -2\ln \left\vert(\tilde{e}_{\bar{z}},\tilde{e}_{\bar{z}'})\right\vert .
\end{gather}

Generalizing a theorem proved for homogeneous bounded domains~\cite{alo}, in~\cite{LM15} Loi and Mossa have proved:
\begin{Theorem}\label{LMN}
Let $(M,\omega)$ be a simply-connected homogeneous K\"ahler manifold such that the associated K\"ahler form $\omega$ is integral. Then there exists a constant $\mu_0>0$ such that $M$ equipped with $\mu_0\omega$ is projectively induced.
\end{Theorem}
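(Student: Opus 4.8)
The plan is to combine the classification of simply-connected homogeneous K\"ahler manifolds with the Berezin/Kodaira-type quantization recalled above, reducing the statement to its building blocks. First I would invoke the fundamental conjecture for homogeneous K\"ahler manifolds \cite{DN} (Gindikin and Vinberg \cite{GV}): since $M$ is simply connected, its structure theory realizes it as a holomorphic fiber bundle $\pi\colon M\to\Omega$ over a homogeneous bounded domain $\Omega$, with fiber the K\"ahler product $\C^k\times F$, where $\C^k$ carries a flat metric and $F$ is a generalized flag manifold -- the compact simply-connected homogeneous K\"ahler factor. Since $\Omega$ and $\C^k$ are contractible, restriction to a fiber identifies $H^2(M;\Z)$ with $H^2(F;\Z)$; hence $[\omega]$ integral forces the flag part of $\omega$ to lie in an integral class, and $\omega$ is prequantizable -- it carries a prequantum line bundle $(\gl,h,\nabla)$ with curvature $\omega$ and $c_1[\gl]=[\omega]$.

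Next I would produce, for a suitable positive integer $\mu_0$, the Kodaira-type map $\iota_M\colon M\to\Phinf$, $\iota_M(z)=[\varphi_0(z):\varphi_1(z):\cdots]$, built from an orthonormal basis $\{\varphi_i\}$ of the weighted Hilbert space $\Hi_{\mu_0 f}$ of \eqref{HIF} -- the square-integrable holomorphic sections of $\gl^{\otimes\mu_0}$. Because $h^{\mu_0}$ has curvature $\mu_0\omega$, one gets $\iota_M^*\dd s^2_{\rm FS}=\ii\pa\bar\pa\log\big(\sum_i|\varphi_i|^2\big)=\mu_0\,\dd s^2_M$ automatically, exactly as in Remark~\ref{asaofi}. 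Hence the whole theorem reduces to showing that, for $\mu_0$ large, $\Hi_{\mu_0 f}\neq 0$ and it is \emph{ample}, i.e.\ it separates points and $1$-jets of $M$, so that $\iota_M$ is a well-defined holomorphic immersion.

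These two properties are classical on the building blocks. For the flat factor $\C^k$ the Fock reproducing kernel is of positive type and its Hilbert space is ample for every positive weight, so every multiple of the flat metric is projectively induced. For the flag factor $F$, Kodaira's theorem applies: the (unique) invariant K\"ahler metric on a flag manifold lying in an integral class is projectively induced, via the Borel--Weil embedding $F\hookrightarrow\db{P}(V_\lambda)$ attached to the corresponding dominant integral weight $\lambda$, and positive-integer multiples stay projectively induced. For the base $\Omega$ this is precisely the theorem for homogeneous bounded domains recalled above (the result of \cite{alo}): there is $\mu_1>0$ such that $\mu\,\omega_\Omega$ is projectively induced for every $\mu\ge\mu_1$, the proof running through the weighted Bergman spaces of $\Omega$ and the positivity of powers of its Bergman kernel. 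One then fixes $\mu_0$ to be any integer $\ge\mu_1$.

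The main obstacle is that the fibration $\pi\colon M\to\Omega$ is \emph{not} a K\"ahler product: already the Siegel--Jacobi ball $\mc{D}^J_n=\C^n\times\mc{D}_n$ shows that $\omega$ does not split over the complex-product decomposition, its K\"ahler potential mixing fiber and base variables (the very phenomenon behind the FC change of coordinates of~\cite{nou}). Hence one cannot merely tensor the three Hilbert spaces and compose with a Segre embedding; instead one must control $\Hi_{\mu_0 f}$ relative to $\pi$. Fiberwise it is the flag piece tensored with a Gaussian/Fock-type piece that is \emph{twisted} by the base point, and the reproducing kernel of $M$ factors -- up to this twist -- as a power of the Bergman kernel of $\Omega$ times the Borel--Weil kernel of $F$ times a Gaussian exponential (equivalently, in the language of Calabi's diastasis $D_M$ of \eqref{DIA}). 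Proving that this combined kernel remains of positive type -- with $\mu_0\ge\mu_1$ controlling the domain power while the Gaussian and flag parts are handled as above -- and, more importantly, that the associated Hilbert space separates points and $1$-jets of the total space, is the technical heart of the argument: this uniform control along the fibration is precisely what must be added to the bounded-domain case of \cite{alo}.
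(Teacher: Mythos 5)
The paper itself offers no proof of Theorem~\ref{LMN}: it is quoted from Loi and Mossa \cite{LM15} as a generalization of \cite{alo}, so there is no internal argument to measure yours against. Judged on its own terms, your proposal correctly identifies the architecture of the known proof --- the Dorfmeister--Nakajima solution of the fundamental conjecture, the identification $H^2(M;\Z)\cong H^2(F;\Z)$ over a contractible base and the resulting prequantum line bundle, the coherent-state/Kodaira map from a weighted Bergman space, and the three building blocks $\C^k$, $F$, $\Omega$ --- but it is a plan rather than a proof. The step you defer in your final paragraph is not a technical refinement to be ``added to the bounded-domain case'': it is the content of the theorem. The Dorfmeister--Nakajima fibration $\pi\colon M\to\Omega$ is in general not a K\"ahler product, and everything you actually establish (Fock space for $\C^k$, Borel--Weil for $F$, the threshold $\mu_1$ for $\Omega$, a Segre composition) covers only the product case. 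The Siegel--Jacobi ball is the paper's own witness: the ${\rm FC}$ map \eqref{etaZ} depends anti-holomorphically on $z$, so it is only a symplectomorphism onto $\mc{D}_n\times\C^n$, not a holomorphic isometry, and the twisting of the fiber metric by the base point (visible in $h_{i\bar j}=\mu\bar M_{ij}$ of \eqref{hc11}) is exactly what your sketch does not control. In \cite{LM15} (building on \cite{alo}) this control comes from the normal $j$-algebra/Siegel-domain normal form of homogeneous K\"ahler manifolds and a positivity analysis of powers of the reproducing kernel along the fibration; nothing in your proposal substitutes for it, so the argument as written is incomplete.

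Two smaller inaccuracies. First, the identity $\iota_M^*\dd s^2_{\rm FS}=\mu_0\,\dd s^2_M$ is not ``automatic'' from the curvature of $h^{\mu_0}$: it is equivalent to the constancy of $\epsilon(z)=\e^{-\mu_0 f}\sum_i|\varphi_i(z)|^2$, which on a homogeneous manifold follows from Rawnsley's invariance argument only after one knows that $\Hi_{\mu_0 f}\neq\{0\}$ and that the full quantization data are homogeneous under $G$ (or a central extension acting on $\gl^{\otimes\mu_0}$); establishing $\Hi_{\mu_0 f}\neq\{0\}$ with no base points is again the hard analytic input. Second, you require more than is needed at the end: ``projectively induced'' asks for a holomorphic isometric immersion, and once $\iota_M^*\omega_{\rm FS}=\mu_0\omega$ holds with $\omega$ nondegenerate, $\iota_M$ is automatically an immersion; separation of points and of $1$-jets is relevant for an embedding, which the theorem does not claim.
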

With Theorem \ref{LMN} and the recalled def\/initions, we can formulate the
\begin{Remark}\label{kra}
Let $M=G/H$ be a simply-connected homogeneous K\"ahler manifold. Then the following assertions are equivalent:
\begin{enumerate}\itemsep=0pt
\item[A)] $M$ is a quantizable K\"ahler manifold,
\item[B)] $M$ admits a balanced metric,
\item[C)] $M$ is CS-type manifold and $G$ is a CS-type group,
\item[D)] $M$ is projectively induced and we have \eqref{KOL}, \eqref{invers}.
\end{enumerate}
\end{Remark}
 Loi and Mossa have proved \cite{LM15} the following
\begin{Theorem}\label{LMM} Let $(M,\omega)$ be a homogeneous K\"ahler manifold. Then the following are equivalent:
\begin{enumerate}\itemsep=0pt
\item[a)] $M$ is contractible,
\item[b)] $(M,\omega)$ admits a global K\"ahler potential,
\item[c)] $(M,\omega)$ admits a global diastasis $D_M\colon M\times M\rightarrow \R$,
\item[d)] $(M,\omega)$ admits a Berezin quantization.
\end{enumerate}
\end{Theorem}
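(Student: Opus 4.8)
The plan is to reduce the whole equivalence to a single structural dichotomy for the homogeneous K\"ahler manifold $M$ --- whether or not it has a nontrivial compact factor --- and then to test the four conditions against it. By the fundamental conjecture for homogeneous K\"ahler manifolds, proved by Dorfmeister and Nakajima (\cite{DN}; conjectured by Gindikin and Vinberg \cite{GV}), $M$ is the total space of a holomorphic fiber bundle $\pi\colon M\to D$ over a homogeneous bounded domain $D$, whose fiber, endowed with the metric induced from $\omega$, is K\"ahler-isomorphic to $\C^k\times C$ with $\C^k$ flat and $C$ a simply connected compact homogeneous K\"ahler manifold. Since a homogeneous bounded domain is diffeomorphic to a Euclidean space, hence contractible, and $\C^k$ is contractible, a fiber-bundle argument shows that $M$ is contractible if $C$ is a point; conversely, if $\dim_{\C}C\ge 1$ then the inclusion of a fiber $C\hookrightarrow M$ forces $H^{2}(M;\R)\ne 0$, because $\int_C\omega^{\dim_\C C}=\operatorname{vol}(C)>0$ makes $[\omega|_C]$, and hence $[\omega]$, nonzero. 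Thus (a) holds if and only if $M$ has no compact factor. In that case the Piatetski-Shapiro realization of $D$ as a Siegel domain provides an explicit global K\"ahler potential on $M$ (the logarithm of a Bergman-type kernel of $D$ together with the quadratic term coming from the $\C^k$-fiber), which gives $(a)\Rightarrow(b)$; and $(b)\Rightarrow(a)$ follows by restricting a global potential $\Phi$ to the compact factor $C\subset M$ sitting in a fiber of $\pi$, which would make $\omega|_C$ exact and force $\dim_\C C=0$.

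Next I would treat $(b)\Leftrightarrow(c)$. The implication $(c)\Rightarrow(b)$ is immediate: for a fixed base point $q$, Calabi's diastasis $D_M(q,\cdot)$ is a nonnegative real-analytic function vanishing at $q$ with $\ii\pa\bar{\pa} D_M(q,\cdot)=\omega$ up to normalization, hence a global K\"ahler potential \cite{calabi1}. For the converse, note that $(b)$ already entails that $M$ has no compact factor, so $M$ is modelled on a $\C^k$-bundle over a Siegel domain $D$; on such a model the relevant potential polarizes, i.e.\ admits a sesqui-holomorphic extension $K_M(z,\bar w)$ over all of $M\times M$, because the Bergman kernel of a Siegel domain is holomorphic in its first and anti-holomorphic in its second argument on $D\times D$ and the flat factor contributes the entire bilinear term. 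Calabi's recipe $D_M(z,w)=\log K_M(z,\bar z)+\log K_M(w,\bar w)-\log K_M(z,\bar w)-\log K_M(w,\bar z)$, which is independent of the chosen potential, then defines the diastasis globally, giving $(c)$. I expect this step --- globalizing the sesqui-holomorphic continuation of the potential --- to be the technical heart of the argument; everything else is either Calabi's local theory, a cohomological obstruction supported on the compact factor, or an appeal to the structure theorem \cite{DN}.

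Finally I would close the cycle $(a)\Rightarrow(d)\Rightarrow(b)$. For $(d)\Rightarrow(b)$: a Berezin quantization in the sense of \eqref{PAR}--\eqref{FK} supplies globally defined coherent vectors $e_{\bar z}$ and hence the globally defined reproducing kernel $K_M(z,\bar z)=(e_{\bar z},e_{\bar z})>0$, so that $f=\log K_M$ is a global K\"ahler potential by \eqref{FK} and \eqref{Ter}. For $(a)\Rightarrow(d)$: if $M$ is contractible it is simply connected with $H^2(M;\R)=0$, so $\omega$ is (trivially) integral, and Theorem~\ref{LMN} yields $\mu_0>0$ with $\mu_0\omega$ projectively induced; by Remarks~\ref{asaofi} and~\ref{kra} this is exactly the datum of a balanced metric and a CS-orbit realization $\iota_M\colon M\hookrightarrow\Phinf$ for $\mu_0\omega$, and since $M$ is homogeneous the rescaling $\mu_0$ is harmless (pass to a suitable power of the prequantum line bundle), which gives a Berezin quantization of $(M,\omega)$. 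Alternatively, one may Berezin-quantize each factor of the model $\C^k\times D$ --- the Bargmann--Fock construction on $\C^k$ and Berezin's original one, generalized to homogeneous bounded domains \cite{berezin,alo}, on $D$ --- and transport it along the bundle, which is holomorphically trivial over the Stein contractible base by Grauert's Oka principle. Combining $(a)\Leftrightarrow(b)$, $(b)\Leftrightarrow(c)$ and the cycle $(a)\Rightarrow(d)\Rightarrow(b)\Rightarrow(a)$ yields the full equivalence.
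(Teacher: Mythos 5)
First, note that the paper does not actually prove this statement: it is quoted from Loi and Mossa \cite{LM15}, and all the paper records about its proof is that it rests on the fundamental conjecture for homogeneous K\"ahler manifolds \cite{DN} and on the asymptotics of the Bergman kernel \cite{eng}. Your outline correctly seizes on the first ingredient (the Dorfmeister--Nakajima fibration) but never touches the second, and that omission is not cosmetic. A Berezin quantization in the sense of \cite{berezin} is a family of operator algebras indexed by a set of Planck constants accumulating at $0$ and satisfying the correspondence principle; it is not the same datum as a single balanced metric. Your step $(a)\Rightarrow(d)$ --- ``Theorem~\ref{LMN} gives $\mu_0\omega$ projectively induced, hence a balanced metric, hence a Berezin quantization'' --- silently identifies these two notions, and verifying the correspondence principle is exactly where Engli\v{s}'s asymptotic expansion of the Bergman kernel (equivalently, of the $\epsilon$-function) enters. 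Your alternative route (``quantize each factor and transport along the holomorphically trivial bundle'') also does not work as stated, because the homogeneous metric on $\C^k\times D$ is in general not a product metric; the Siegel--Jacobi ball itself, cf.~\eqref{aabX}, is the standard example.

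Second, you misquote the Dorfmeister--Nakajima theorem in a way that breaks your central dichotomy. The fiber over the homogeneous bounded domain is the K\"ahler product of a \emph{flat homogeneous K\"ahler manifold} --- that is, $\C^k/\Gamma$ for a discrete group $\Gamma$ of translations, possibly nontrivial --- and a compact simply connected homogeneous K\"ahler manifold $C$. Hence ``$M$ is contractible iff $C$ is a point'' is false: a flat cylinder or torus factor obstructs contractibility through $\pi_1$ while contributing nothing to your $H^2$ obstruction. Correspondingly, your argument for $(b)\Rightarrow(a)$ --- restrict the global potential to $C$ and conclude $\omega|_C$ is exact --- only eliminates $C$; it cannot eliminate $\Gamma$, and no exactness argument can, since the flat form on $\C/\Gamma$ \emph{is} globally $\ii\pa\bar{\pa}$-exact (take $y^2$ on the cylinder). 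Whatever closes this loop in \cite{LM15}, it is not the argument you give. Finally, you yourself flag the global sesquiholomorphic extension of the potential (the step $(b)\Rightarrow(c)$) as ``the technical heart'' and then do not carry it out; for a general homogeneous metric (not the Bergman metric) on a homogeneous bounded domain this is a theorem of \cite{alo}, not a formal consequence of holomorphy of the Bergman kernel. So your outline shares the paper's declared starting point \cite{DN}, but it has genuine gaps at $(b)\Rightarrow(a)$, $(b)\Rightarrow(c)$ and $(a)\Rightarrow(d)$.
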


The proof of the Theorem \ref{LMM} is based on the fundamental conjecture for homogeneous bounded domains~\cite{DN} and the asymptotic of the Bergman kernel~\cite{eng}.

Theorems \ref{LMN}, \ref{LMM} and Remark~\ref{kra} will be used in the proof of Proposition~\ref{geoPL}.

\subsection{Elements of geometry of the Siegel upper half-plane and Siegel ball}\label{PRL3}

Firstly we recall some standard facts about the symplectic group and its algebra, see references and details in~\cite{sbj,nou}. Even if we will be concerned mainly with the Jacobi group $G^J_n$, we recall some basic facts about Cayley and partial Cayley transform which will appear in the paper.

\subsubsection{The Cayley transform}

If $X \in\got{sp}(n,\R)$, i.e., $X^tJ+JX=0$, then
\begin{gather*}%\label{XREAL}
X=\left( \begin{matrix} a & b \\c & -a^t\end{matrix}\right),
\qquad\text{where} \quad b=b^t,\quad c=c^t, \quad a,b,c\in M(n,\R ).
\end{gather*}

If $ g=\left(\begin{matrix}a & b\\c& d\end{matrix}\right) \in {{\operatorname{Sp}}}(n,\R)$, i.e., $g^tJg=J$, then the matrices $a,b,c,d\in M(n,\R)$ have the properties
%\begin{subequations}\label{lica}
\begin{gather*}
a^tc = c^ta, \!\!\qquad b^t d = d^t b, \!\!\qquad a^td - c^t b =\un ,\!\! \qquad %\label{lica1}\\
ab^t = ba^t, \!\!\qquad cd^t=dc^t, \!\!\qquad ad^t-bc^t =\un . %\label{lica2}
\end{gather*}
%\end{subequations}

We have the correspondence
\begin{gather}\label{pana}
g=\left(\begin{matrix}a & b\\c & d\end{matrix}\right) \in M(2n,\R
)\leftrightarrow g_{\C} = \mc{C}^{-1}g\mc{C} = \left(\begin{matrix} p & q \\ \bar{q}
&\bar{p} \end{matrix}\right), \qquad p, q\in M(n,\C ),
\end{gather}
where \begin{gather}%\label{CCC}
\mc{C}=\left(\begin{matrix} \ii \un & \ii \un
\\
-\un & \un\end{matrix}\right), \qquad \mc{C}^{-1}= \frac{1}{2}
\left(\begin{array}{cc} - \ii \un & -\un \\ -\ii \un &
 \un\end{array}\right), \nonumber\\
2 a = p+q +\bar{p}+\bar{q} ,\qquad
2 b = \ii (\bar{p}-\bar{q}-p+q), \qquad 2 c = \ii (p+q-\bar{p}-\bar{q}), \nonumber\\
 2 d = p-q +\bar{p}-\bar{q}, \qquad 2p = a+d+\ii (b-c), \qquad 2q= a-d -\ii (b+c). \label{CUCURUCU}
 %\label{CUCURUCU1}
\end{gather}
 To every $g\in \operatorname{Sp}(n,\R )$, we associate via \eqref{pana} $g\mapsto g_{\C}\in \operatorname{Sp}(n,\R)_{\C}\equiv\operatorname{Sp}(n, \C)\cap {\rm U}(n,n)$ as in~\eqref{dgM}, where the matrices $p,q\in M(n,\C)$ have the properties
\begin{gather}\label{simplectic}
 pp^*- qq^* = \un,\qquad pq^t=qp^t, \qquad %\label{simp1}\\
 p^*p-q^t\bar{q} = \un ,\qquad p^t\bar{q}=q^*p .%\label{simp2}
 \end{gather}

Let us consider an element $h=(g,l)$ in $G^J_n(\R )_0$, i.e.,
\begin{gather}\label{mM}
g=\left(\begin{matrix} a & b\\ c & d\end{matrix}\right)\in{\operatorname{Sp}}(n,\R),
\qquad l=(n,m)\in\R^{2n}, \end{gather}
and $V\in\mc{X}_n$, $u\in\C^n\equiv\R^{2n}$.

Let $g\in \operatorname{Sp}(n,\R )_{\C}$ be of the form \eqref{dgM}, \eqref{simplectic} and $\alpha, z\in\C^n$. The (transitive) action $(g,\alpha)\times(W,z)=(W_1,z_1)$ of the Jacobi group $G^J_n$ on the Siegel--Jacobi ball $\mc{D}^J_n$ is given by the formulas~\cite{sbj}
\begin{gather}
%\label{x44}
W_1 =(pW+q)(\bar{q}W+\bar{p})^{-1}=(Wq^*+p^*)^{-1}(q^t+Wp^t),\nonumber\\
%\label{xxxX}
z_1 = (Wq^*+ p^*)^{-1}(z+ \alpha -W\bar{\alpha}).
\label{TOIU}
\end{gather}

Now we consider the partial Cayley transform \cite{sbj,nou} $\Phi\colon \mc{X}^J_n\rightarrow \mc{D}_n^J$, $\Phi(V,u)=(W,z)$
\begin{gather}\label{bigtransf}
W = (V-\ii\un )(V+\ii\un )^{-1}, \qquad %\label{big1}\\
 z = 2\ii (V+\ii\un)^{-1} u,%\label{big2}
\end{gather}
with the inverse partial Cayley transform $\Phi^{-1}\colon \mc{D}^J_n\rightarrow \mc{X}^J_n$, $\Phi^{-1}(W,z)=(V,u)$
%\label{bigtransF}
\begin{gather} V = \ii (\un-W ) ^{-1} (\un+W ),\qquad
 u = (\un-W)^{-1}z. %\label{big22}
 \label{big11}
\end{gather}
Let us def\/ine $\Theta\colon G^J_n(\R)_0\rightarrow (G^J_n)_0$, $\Theta(h)=h_*$, $h=(g,n,m)$, $h_*=(g_{\C},\alpha)$.

We have proved in \cite[Proposition 2]{nou} (see also \cite{gem, Y08})
\begin{Proposition}\label{THETAS}
$\Theta$ is an group isomorphism and the action of $(G^J_n)_0$ on $\mc{D}^J_n$ is compatible with the action of $G^J_n(\R )_0$ on $\mc{X}^J_n$ through the biholomorphic partial Cayley transform~\eqref{bigtransf}, i.e., if $\Theta(h)=h_*$, then $\Phi h=h_*\Phi$. More exactly, if the action of $(G^J_n)_0$ on~$\mc{D}^J_n$ is given by~\eqref{TOIU}, then the action of $G^J_n(\R)_0$ on~$\mc{X}^J_n$ is given by $(g,l)\times(V,u)\rightarrow (V_1,u_1)\in\mc{X}^J_n$, where
\begin{gather}
%\label{conf1}
V_1 = (aV+b)(cV+d)^{-1} =(Vc^t+d^t)^{-1}(Va^t+b^t), \nonumber\\
 u_1 = (Vc^t+d^t)^{-1}(u+Vn+m).\label{conf}%\label{conf2}
\end{gather}
The matrices $g$ in~\eqref{mM} and $g_{\C}$ in~\eqref{dgM} are related by
\eqref{pana}, \eqref{CUCURUCU}, while $\alpha=m+\ii n$, $m,n\in\R^n$.
\end{Proposition}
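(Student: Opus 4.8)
The plan is to derive everything by direct computation, taking the transitive action \eqref{TOIU} of $(G^J_n)_0$ on $\mc{D}^J_n$ as given and transporting it through the partial Cayley transform.

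First the group statement. The assignment $g\mapsto g_\C=\mc{C}^{-1}g\mc{C}$ is conjugation by the fixed invertible matrix $\mc{C}$, hence a group isomorphism of $\operatorname{Sp}(n,\R)$ onto its image, which by \eqref{pana}, \eqref{simplectic} is exactly $\operatorname{Sp}(n,\R)_\C$; and $(n,m)\mapsto\alpha=m+\ii n$ is an $\R$-linear bijection $\R^{2n}\to\C^n$. Thus $\Theta$ is a bijection, and it remains to match the composition laws of the restricted groups. The $\operatorname{Sp}$-parts match because conjugation is multiplicative. For the Heisenberg parts one checks that the real right-translation $l\mapsto lg'$ goes, under $\alpha=m+\ii n$, into $\alpha\mapsto g'^{-1}_\C\times\alpha=p'^*\alpha-q'^t\bar\alpha$: reading \eqref{CUCURUCU} as $p^*-q^t=d^t+\ii c^t$ and $p^*+q^t=a^t-\ii b^t$, one gets $p'^*\alpha-q'^t\bar\alpha=(d'^tm+b'^tn)+\ii(c'^tm+a'^tn)$, which is precisely $lg'$ rewritten in the variable $m+\ii n$. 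No further (central) cocycle check is needed, since the restricted groups carry no $k$-, resp.\ $t$-component.

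For the compatibility $\Phi h=h_*\Phi$ I would compute $\Phi^{-1}\bigl(h_*\,\Phi(V,u)\bigr)$ from \eqref{bigtransf}, \eqref{big11} and \eqref{TOIU}, and check that it equals \eqref{conf}. The elementary identities $\un-W=2\ii(V+\ii\un)^{-1}$, $\un+W=2V(V+\ii\un)^{-1}$ and $(V+\ii\un)W=V-\ii\un$ (all from $W=(V-\ii\un)(V+\ii\un)^{-1}$ and the commutativity of polynomials in $V$), together with the automorphy-factor identities
\begin{gather*}
pW+q=(V_1-\ii\un)(cV+d)(V+\ii\un)^{-1},\qquad \bar qW+\bar p=(V_1+\ii\un)(cV+d)(V+\ii\un)^{-1},
\end{gather*}
where $V_1=(aV+b)(cV+d)^{-1}$ and the identities come from substituting \eqref{CUCURUCU}, do all the work. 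The second pair gives at once $W_1=(pW+q)(\bar qW+\bar p)^{-1}=(V_1-\ii\un)(V_1+\ii\un)^{-1}$, so the $\mc{X}_n$-coordinate of $\Phi^{-1}(W_1,z_1)$ is $V_1$; the symmetries $W=W^t$, $V=V^t$, $V_1=V_1^t$ turn $Wq^*+p^*$ into the transpose of $\bar qW+\bar p$ and, combined with the symplectic relations $a^tc=c^ta$, $a^td-c^tb=\un$, etc., yield the second expressions for $W_1$ and $V_1$. For the $\C^n$-part I substitute $z=2\ii(V+\ii\un)^{-1}u$ and $\alpha=m+\ii n$ into $z_1=(Wq^*+p^*)^{-1}(z+\alpha-W\bar\alpha)$, use $(V+\ii\un)W\bar\alpha=(V-\ii\un)\bar\alpha$ to obtain $(V+\ii\un)(z+\alpha-W\bar\alpha)=2\ii(u+Vn+m)$, and combine with $u_1=(\un-W_1)^{-1}z_1=\tfrac1{2\ii}(V_1+\ii\un)z_1$ and $(Wq^*+p^*)^{-1}=(V_1+\ii\un)^{-1}(Vc^t+d^t)^{-1}(V+\ii\un)$; the factors $(V_1+\ii\un)$ cancel and one is left with exactly $u_1=(Vc^t+d^t)^{-1}(u+Vn+m)$. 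Since $V\in\mc{X}_n\Leftrightarrow W\in\mc{D}_n$ is the classical Cayley correspondence and $u\leftrightarrow z$ is linear, $\Phi$ is a biholomorphism, so this proves $\Phi h=h_*\Phi$; and the right-hand side of \eqref{conf} is the standard realization of the $G^J_n(\R)_0$-action on $\mc{X}^J_n$, which (via \eqref{pana}, \eqref{CUCURUCU} and $\alpha=m+\ii n$) identifies it with the transported action.

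The routine-but-delicate point, which I expect to be the only real obstacle, is bookkeeping: the Cayley transform is a left/right-asymmetric rational map, the automorphy factors sit on prescribed sides, and the equivalence of the two forms in \eqref{conf} hinges on transposing through the symmetries $V=V^t$, $W=W^t$ — so one must be scrupulous throughout about which variant ($g$ versus $g^t$, $q$ versus $q^t$ versus $q^*$) appears where, using \eqref{simplectic} and \eqref{CUCURUCU} consistently.
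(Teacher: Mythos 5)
Your proposal is correct: the group-isomorphism part (conjugation by $\mc{C}$ together with the check that the real translation $l\mapsto lg'$ becomes $\alpha\mapsto p'^*\alpha-q'^t\bar{\alpha}$ via $p^*-q^t=d^t+\ii c^t$ and $p^*+q^t=a^t-\ii b^t$) and the transport of the action \eqref{TOIU} through the partial Cayley transform via the automorphy-factor identities $pW+q=(V_1-\ii\un)(cV+d)(V+\ii\un)^{-1}$, $\bar{q}W+\bar{p}=(V_1+\ii\un)(cV+d)(V+\ii\un)^{-1}$ and $(V+\ii\un)(z+\alpha-W\bar{\alpha})=2\ii(u+Vn+m)$ all verify, yielding exactly \eqref{conf}. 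The paper itself offers no proof of this proposition --- it recalls the statement from the cited earlier work --- and your direct computation is precisely the standard verification that reference carries out, including the correct use of the symmetries $W=W^t$, $V=V^t$ and the relations \eqref{simplectic} to pass between the two expressions in \eqref{conf}.
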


\subsubsection{The metric}
Siegel has determined the metric on $\mc{X}_n$,
$\operatorname{Sp}(n,\R)$-invariant to the action \eqref{conf} (see
 \cite[equation~(2)]{sieg} or \cite[Theorem~3, p.~644]{hua44}):
\begin{gather}\label{msig}
\dd s^2_{\mc{X}_n}(R)=\tr \big(R^{-1}\dd V R^{-1} \dd \bar{V}\big).
\end{gather}
With the Cayley transform \eqref{big11} and the relations
%\begin{subequations}\label{scgimb}
\begin{gather}\label{scg1}
\dot{V} = 2\ii U \dot{W} U, \qquad U=(\un-W)^{-1},\qquad
2 R =(\un +W)U+(\un+\bar{W})\bar{U} ,
\end{gather}
 introduced in \eqref{msig}, it is obtained the metric on $\mc{D}_n$, $\operatorname{Sp}(n,\R)_{\C}$-invariant to the action~\eqref{TOIU}:
\begin{gather}\label{mtrball}
\dd s^2_{\mc{D}_n}(W)= 4\tr(M\dd W\bar{M}\dd \bar{W}),\qquad W\in\mc{D}_n, \qquad M=(\un-W\bar{W})^{-1},
\end{gather}
associated with the K\"ahler two-form \eqref{omd}, modulo the factor $\frac{2}{k}$.
\begin{Remark}The metric \eqref{mtrball} can be written as in~\eqref{NEMM},~\eqref{neM}.
\end{Remark}
\begin{proof}
Equation \eqref{mtrball} can be written down as
\begin{gather*}%\label{CEE}
\tfrac{1}{4}\dd s^2_{\mc{D}_n}(W) = \sum_{p,q,m,n}H_{pq\bar{m}\bar{n}}\dd w_{pq}\dd \bar{w}_{mn},\\
H_{pq\bar{m}\bar{n}}:=M_{np}M_{mq}, \qquad M:=(\un- W\bar{W})^{-1}.
\end{gather*}
We have
\begin{gather}
 \tfrac{1}{4}\dd
s^2_{\mc{D}_n}(W) =\sum_{p<q;\, m<n}(H_{pq\bar{m}\bar{n}}+H_{pq\bar{n}\bar{m}}+H_{qp\bar{m}\bar{n}}+H_{qp\bar{n}\bar{m}})\dd w_{pq}\dd
\bar{w}_{mn} \nonumber\\
\hphantom{\tfrac{1}{4}\dd s^2_{\mc{D}_n}(W) =}{} + \sum_{p<q}(H_{pq\bar{m}\bar{m}}+H_{qp\bar{m}\bar{m}}) \dd w_{pq}\dd\bar{w}_{mm}\nonumber\\
\hphantom{\tfrac{1}{4}\dd s^2_{\mc{D}_n}(W) =}{} +\sum_{m<n}( H_{pp\bar{m}\bar{n}}+H_{pp\bar{n}\bar{m}})\dd w_{pp}\dd \bar{w}_{mn}+ \sum H_{pp\bar{m}\bar{m}}\dd w_{pp}\dd \bar{w}_{mm}.\label{SPS}
\end{gather}
For reasons which will be clarif\/ied further (see formulas \eqref{620Ec}, \eqref{hc23}), we write \eqref{SPS} as
\begin{gather}\label{NEMM}
\tfrac{1}{4}\dd
s^2_{\mc{D}_n}(W)= \sum_{p\le q; \, m\le n}h^k_{pqmn}\dd w_{pq}\dd
\bar{w}_{mn}, \end{gather}
where
\begin{gather}\label{neM}
h^k_{pq\bar{m}\bar{n}} = 2M_{mp}M_{nq}(1-\delta_{pq})+2M_{mq}M_{np}(1-\delta_{mn})+
M^2_{mp}\delta_{pq}\delta_{mn},
\end{gather}
which proves the statement of the remark.
\end{proof}

In order to take into account the symmetry of the matrix $W=(w_{ij})_{i,j=1,\dots,n}$, we introduce the notation (see \cite[equation (4.5)]{nou}):
\begin{gather}\label{mircea}
\Delta^{ij}_{pq}:=\frac{\pa w_{ij}}{\pa w_{pq}} = \delta_{ip}\delta_{jq}+\delta_{iq}\delta_{jp}-\delta_{ij}\delta_{pq}\delta_{ip},
\qquad w_{ij}=w_{ji} .
\end{gather}

For calculation of the ``inverse'' $h^{-1}$ of the metric matrix $h$ of $\mc{D}^J_n$~-- see Theorem \ref{mainTH}~-- we need the ``inverse'' of the matrix~\eqref{neM}, which we calculate in Proposition~\ref{LLB}, exploiting the formula given in~\cite{maa} for the Laplace--Beltrami operator~\eqref{LPB} on~$\mc{X}_n$ with the result (see~\eqref{ofi}):
\begin{gather}\label{kpqmn}
 k_{mn\bar{u}\bar{v}}=\tfrac{1}{2}(N_{vn}\bar{N}_{mu}+N_{vm}\bar{N}_{nu}), \qquad N=\un-W\bar{W}.\end{gather}

We calculate
\begin{gather}\label{BIGM}
E_{pq}^{uv}:=\sum_{m\leq n}h^k_{pq\bar{m}\bar{n}}k_{mn\bar{u}\bar{v}}.
\end{gather}

We shall prove a relation which gives the sense of the ``inverse''
matrix of \eqref{neM}, important in Theorem \ref{mainTH}:
\begin{Lemma}\label{CUL} The ``inverse''
matrix of the metric matrix \eqref{neM} on the Siegel ball $\mc{D}_n$
is the matrix~\eqref{kpqmn} and we have
\begin{gather}\label{culmea}
 E^{uv}_{pq}= \Delta^{uv}_{pq}.\end{gather}\end{Lemma}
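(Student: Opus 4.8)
The plan is to verify \eqref{culmea} by direct computation, treating the sum \eqref{BIGM} as a contraction of the metric matrix \eqref{neM} with the candidate inverse \eqref{kpqmn}, and then checking that the result equals the symmetrization operator $\Delta^{uv}_{pq}$ of \eqref{mircea}. Concretely, I would substitute the explicit entries
\begin{gather*}
h^k_{pq\bar m\bar n} = 2M_{mp}M_{nq}(1-\delta_{pq})+2M_{mq}M_{np}(1-\delta_{mn})+M^2_{mp}\delta_{pq}\delta_{mn},\\
k_{mn\bar u\bar v}=\tfrac12\bigl(N_{vn}\bar N_{mu}+N_{vm}\bar N_{nu}\bigr),
\end{gather*}
where $M=N^{-1}$ and $N=\un-W\bar W$, into $E^{uv}_{pq}=\sum_{m\le n}h^k_{pq\bar m\bar n}k_{mn\bar u\bar v}$. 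The key algebraic identity I would exploit repeatedly is $\sum_m M_{mp}N_{vm}=\sum_m (N^{-1})_{mp}N_{vm}=\delta_{vp}$ (and its conjugate $\sum_m M_{np}\bar N_{mu}=\delta_{nu}$ after noting $\bar M=\bar N^{-1}$), so that each term collapses to a product of Kronecker deltas. The restricted range $m\le n$ must be handled with care: I would first extend the sum over all $m,n$ by symmetrizing, using that $k_{mn\bar u\bar v}$ is symmetric under $m\leftrightarrow n$ while $h^k_{pq\bar m\bar n}$ as written already packages the off-diagonal multiplicities, so one compensates by the usual factor accounting for the diagonal $m=n$ terms.

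The main steps, in order, would be: (1) rewrite $E^{uv}_{pq}$ as an unrestricted sum over $m,n$ by absorbing the $m\le n$ constraint, being careful that the $(1-\delta_{mn})$ factors and the $\delta_{mn}$ term in $h^k$ interact correctly with the doubling; (2) expand the product into (at most) six elementary sums, each of the form $\sum_{m,n}M_{\bullet\bullet}M_{\bullet\bullet}N_{\bullet\bullet}\bar N_{\bullet\bullet}$; (3) carry out the $m$- and $n$-contractions using $MN=\un$ and $\bar M\bar N=\un$, reducing everything to deltas in the free indices $p,q,u,v$; (4) collect terms and recognize the combination $\delta_{up}\delta_{vq}+\delta_{uq}\delta_{vp}-\delta_{uv}\delta_{pq}\delta_{up}$, which is exactly $\Delta^{uv}_{pq}$ by \eqref{mircea}. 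Throughout I would keep in mind the symmetry constraints $w_{pq}=w_{qp}$ and the symmetry of $W$, which force $p\le q$ and $u\le v$ in the relevant index ranges and make $\Delta^{uv}_{pq}$ act as the identity on symmetric-index data.

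The hard part, I expect, will be the bookkeeping of the diagonal versus off-diagonal cases — i.e., correctly tracking the factors of $2$ and the $\delta_{pq}$, $\delta_{mn}$, $\delta_{uv}$ terms so that the diagonal and off-diagonal entries of $\Delta^{uv}_{pq}$ come out with the right normalization simultaneously. A clean way to organize this is to treat three cases according to whether $p=q$ and/or $u=v$, verifying \eqref{culmea} in each, rather than trying to manipulate one master formula; the case $p<q$, $u<v$ is the simplest (the $\delta_{pq}\delta_{mn}$ piece of $h^k$ drops out and the two remaining terms each contract to a single delta-product), while the purely diagonal case $p=q$, $u=v$ requires checking that the $M^2_{mp}\delta_{pq}\delta_{mn}$ contribution against $N_{vn}\bar N_{mu}$ yields $\delta_{up}$ with coefficient $1$. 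Once all cases check out, \eqref{culmea} follows, and this is precisely the statement that \eqref{kpqmn} is the inverse of \eqref{neM} in the sense dictated by the symmetry of $W$, which is what Theorem~\ref{mainTH} will invoke.
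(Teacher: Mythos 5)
Your proposal is correct and follows essentially the same route as the paper: a direct contraction of \eqref{neM} with \eqref{kpqmn} over the restricted range $m\le n$, collapsing each term via $MN=\un$ (and its conjugate) to Kronecker deltas, with a case split on $p=q$ versus $p\not=q$ to track the diagonal multiplicities before recognizing the result as $\Delta^{uv}_{pq}$. The only cosmetic difference is that you propose to symmetrize to an unrestricted sum and to additionally split on $u=v$, whereas the paper keeps the sum over $m\le n$ and separates the $m=n$ and $m<n$ contributions directly; the bookkeeping is equivalent.
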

\begin{proof}
a) Firstly, we consider the case $p=q$ in \eqref{BIGM}. We get successively
\begin{gather}
E^{uv}_{pp} = \sum_{m\le n}(2-\delta_{mn})(\un - W\bar{W})^{-1}_{mp}(\un - \bar{W}W)^{-1}_{pn}k_{mn\bar{u}\bar{v}}\nonumber\\
\hphantom{E^{uv}_{pp}}{} = (\un - W\bar{W})^{-1}_{mp}(\un - \bar{W}W)^{-1}_{pm}(\un-W\bar{W})_{vm}(\un - \bar{W}W)_{mu}\nonumber\\
\hphantom{E^{uv}_{pp}=}{}+ \sum_{m<n}(\un-W\bar{W})^{-1}_{mp}(\un-\bar{W}W)^{-1}_{pn} \nonumber\\
\hphantom{E^{uv}_{pp}=}{} \times [(\un-W\bar{W})_{vn}(\un-\bar{W}W)_{mu}+(\un-W\bar{W})_{vm}(\un-\bar{W}W)_{nu}]\nonumber\\
\hphantom{E^{uv}_{pp}}{} =(\un-W\bar{W})^{-1}_{mp}(\un-\bar{W}W)^{-1}_{pm}(\un-W\bar{W})_{vm}(\un-\bar{W}W)_{mu}\nonumber\\
\hphantom{E^{uv}_{pp}=}{} +
\sum_{m<n}(\un-W\bar{W})^{-1}_{np}(\un-\bar{W}W)^{-1}_{pm}(\un-W\bar{W})_{vn}(\un-\bar{W}W)_{mu}\nonumber\\
\hphantom{E^{uv}_{pp}=}{} +
 \sum_{n<m}(\un-W\bar{W})^{-1}_{mp}(\un-\bar{W}W)^{-1}_{pm}(\un-W\bar{W})_{vn}(\un-\bar{W}W)_{mu}\nonumber\\
\hphantom{E^{uv}_{pp}}{} =\delta_{up}\delta_{vp}.\label{PSLIT1}
\end{gather}
b) Now we consider the case $p\not=q$ in \eqref{BIGM}. We get successively
\begin{gather}
E^{uv}_{pq} = 2\!\sum_{m\le n}\!\! \big[(\un - W\bar{W})^{-1}_{mp}(\un - \bar{W}W)^{-1}_{qn}\! +
(\un - W\bar{W})^{-1}_{mq}(\un - \bar{W}W)^{-1}_{pn}(1 - \delta_{mn})\big]k_{mn\bar{u}\bar{v}}\!\nonumber\\
\hphantom{E^{uv}_{pq}}{}
=2(\un-W\bar{W})^{-1}_{mp}(\un-\bar{W}W)^{-1}_{qm}(\un-W\bar{W})_{vm}(\un-\bar{W}W)_{mu}\nonumber\\
\hphantom{E^{uv}_{pq}=}{}+\sum_{m<n}\big[(\un-W\bar{W})^{-1}_{mp}(\un-\bar{W}W)^{-1}_{qn}+(\un-W\bar{W})^{-1}_{mq}(\un-\bar{W}W)^{-1}_{pn}\big] \nonumber\\
\hphantom{E^{uv}_{pq}=}{} \times\big[(\un - W\bar{W})_{vn}(\un - \bar{W}W)_{vn}(\un - \bar{W}W)_{mu} + (\un - W\bar{W})_{vm}(\un - \bar{W}W)_{nu}\big]\nonumber\\
\hphantom{E^{uv}_{pq}}{}= \delta_{pu}\delta_{vq}+\delta_{pv}\delta_{qu}.\label{PSLIT2}
\end{gather}
Putting together \eqref{PSLIT1} and \eqref{PSLIT2}, with formula
\begin{gather*}E^{pq}_{uv}=E^{pq}_{uv}(1-\delta_{pq})+E^{pq}_{uv}\delta_{pq},\end{gather*} we
have proved~\eqref{culmea}.
\end{proof}

\subsection{Geometry of the Siegel--Jacobi disk}\label{PRL4}
We recall some formulas describing the dif\/ferential geometry of the Siegel--Jacobi disk $\mc{D}^J_1$ \cite{jac1,csg,berr}.

The (transitive) action of the group $ G^J_1=\mr{H}_1\rtimes\text{SU}(1,1)\ni (g,\alpha)\times(z,w)\rightarrow(z_1,w_1)\in \mc{D}^J_1$ on the Siegel disk is given by the formulas
\begin{gather}\label{dg} w_1
=\frac{a w+ b}{\delta}, \qquad \delta=\bar{b}w+\bar{a},\qquad
\text{SU}(1,1) \ni g= \left( \begin{matrix}a & b\\ \bar{b} &
\bar{a}\end{matrix}\right),
\end{gather}
where $|a|^2-|b|^2=1$,
\begin{gather}\label{xxx}
z_1=\frac{\gamma}{\delta}, \qquad \gamma = z +\alpha-\bar{\alpha}w.
\end{gather}
The balanced K\"ahler two-form is obtained with formulas \eqref{kall}, \eqref{Ter}
\begin{gather*}%\label{aaa}
-\ii \omega_{k\mu}(z,w) = h_{z\bar{z}}\dd z\wedge \dd\bar{z}+h_{z\bar{w}}\dd z\wedge
\dd \bar{w} -h_{\bar{z}w}\dd\bar{z}\wedge \dd w +h_{w\bar{w}}\dd w\wedge \dd\bar{w} ,
\end{gather*}
where the K\"ahler potential \eqref{FK} is
\begin{gather}\label{keler}
f(z,w) =\mu\frac{2z\bar{z}+z^2\bar{w}+\bar{z}^2w}{2P} -2k\ln (P), \qquad P=1-w\bar{w}.
\end{gather}
$k$ indexes the positive discrete series of $\text{SU}(1,1)$ ($2k\in\N$), while $\mu>0$ indexes the representations of the Heisenberg group.
\begin{Proposition}\label{prop2}
The balanced K\"ahler two-form $\omega_{k\mu}$ on $\mc{D}^J_1$, $G^J_1$-invariant to the action \eqref{dg}, \eqref{xxx}, can be written as
\begin{gather*}%\label{aab}
-\ii\omega_{k\mu}(z,w) =2k\frac{\dd w \wedge \dd\bar{w}}{P^2} + \mu\frac{\mc{A}\wedge \bar{\mc{A}}}{P},
\qquad \mc{A}=\dd z+\bar{\eta}(z,w)\dd w,
\\ %\label{csv}
z=\eta-w\bar{\eta},\qquad\text{and} \qquad \eta = \eta(z,w):= \frac{z+\bar{z}w}{1-w\bar{w}}.
\end{gather*}
\end{Proposition}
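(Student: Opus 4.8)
The plan is to compute the mixed second derivatives of the Kähler potential $f(z,w)$ in \eqref{keler} directly, then reorganize the resulting two-form into the claimed $1$-forms $\mc{A}$ and $\dd w$. First I would differentiate $f$ with respect to $z$ and $\bar{z}$: since $f = \mu\frac{2z\bar{z}+z^2\bar{w}+\bar{z}^2 w}{2P} - 2k\ln P$ and $P=1-w\bar{w}$ is independent of $z,\bar z$, we immediately get $h_{z\bar z} = \pa^2 f/\pa z\pa\bar z = \mu/P$. Next $h_{z\bar w} = \pa^2 f/\pa z\pa\bar w$: differentiating $\pa f/\pa z = \mu\frac{2\bar z + 2z\bar w}{2P} = \mu\frac{\bar z + z\bar w}{P}$ with respect to $\bar w$ gives $\mu\bigl(\frac{z}{P} + \frac{(\bar z + z\bar w)w}{P^2}\bigr) = \frac{\mu}{P}\bigl(z + \frac{w(\bar z + z\bar w)}{P}\bigr) = \frac{\mu}{P}\,\overline{\eta}\cdot$? — one checks that $z + w(\bar z+z\bar w)/P = (zP + w\bar z + z w\bar w)/P = (z + w\bar z)/P = \eta$; so in fact $h_{z\bar w} = \frac{\mu}{P}\,\overline{\bar\eta}$, i.e.\ I must be careful with conjugations: $\pa f/\pa\bar z = \mu(z + \bar z w)/P$, whose $w$-derivative gives $h_{\bar z w} = \frac{\mu}{P}\eta$, and by the reality relation $h_{z\bar w} = \bar h_{\bar z w}$'s partner $= \frac{\mu}{P}\bar\eta$. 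This already produces the cross terms $\mu\bar\eta/P$ matching the expansion of $\mu\,\mc{A}\wedge\bar{\mc{A}}/P$.

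Then I would compute $h_{w\bar w} = \pa^2 f/\pa w\pa\bar w$, the only genuinely laborious piece. Writing $f = \mu g/(2P) - 2k\ln P$ with $g = 2z\bar z + z^2\bar w + \bar z^2 w$, I get $\pa f/\pa w = \mu\bigl(\frac{\bar z^2}{2P} + \frac{g\bar w}{2P^2}\bigr) + 2k\frac{\bar w}{P}$, and differentiating again in $\bar w$ yields $h_{w\bar w} = \frac{2k}{P^2} + \mu\bigl(\text{several terms}\bigr)$. The claim is that after assembling $-\ii\omega_{k\mu} = h_{z\bar z}\dd z\wedge\dd\bar z + h_{z\bar w}\dd z\wedge\dd\bar w + h_{\bar z w}\dd\bar z\wedge\dd w + h_{w\bar w}\dd w\wedge\dd\bar w$ (with the sign conventions of the displayed formula preceding \eqref{keler}), the $\mu$-part equals $\frac{\mu}{P}(\dd z + \bar\eta\,\dd w)\wedge(\dd\bar z + \eta\,\dd\bar w)$. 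Expanding that product gives coefficient $\mu/P$ on $\dd z\wedge\dd\bar z$, $\mu\bar\eta/P$ on $\dd z\wedge\dd\bar w$, $\mu\eta/P$ — wait, on $\dd w\wedge\dd\bar z$, which is $-\dd\bar z\wedge\dd w$ — so I must track that $\mc{A}\wedge\bar{\mc{A}}$ contributes $-\frac{\mu}{P}\eta$ to the $\dd\bar z\wedge\dd w$ slot, consistent with the $-h_{\bar z w}$ sign in the two-form expansion; and on $\dd w\wedge\dd\bar w$ the product contributes $\frac{\mu}{P}\bar\eta\eta = \frac{\mu|\eta|^2}{P}$. So the task reduces to the algebraic identity $h_{w\bar w} - \frac{\mu|\eta|^2}{P} = \frac{2k}{P^2}$, i.e.\ verifying that the $\mu$-terms in $h_{w\bar w}$ collapse to $\mu|\eta|^2/P$ with $\eta = (z+\bar z w)/P$.

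The main obstacle is exactly that last identity: showing the somewhat messy expression for the $\mu$-contribution to $h_{w\bar w}$ simplifies to $\mu|\eta|^2/P = \mu(z+\bar z w)(\bar z + z\bar w)/P^3$. I would carry this out by putting everything over the common denominator $P^3$, expanding the numerator coming from $\mu\bigl(\frac{\bar z^2 w}{\text{?}} + \dots\bigr)$ after the second differentiation, and comparing with $(z+\bar z w)(\bar z + z\bar w) = z\bar z + z^2\bar w + \bar z^2 w + z\bar z w\bar w = g + z\bar z w\bar w$. Using $1 = P + w\bar w$ repeatedly to trade powers of $w\bar w$ for $P$ should make the two numerators agree. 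Once that is done, the decomposition $-\ii\omega_{k\mu} = \frac{2k}{P^2}\dd w\wedge\dd\bar w + \frac{\mu}{P}\,\mc{A}\wedge\bar{\mc{A}}$ follows, and the relations $z = \eta - w\bar\eta$ and $\eta = (z+\bar z w)/(1-w\bar w)$ are just the definition of $\eta$ and its inversion, checked by substituting $\bar\eta = (\bar z + \bar w z)/P$ into $\eta - w\bar\eta = [(z+\bar z w) - w(\bar z + z\bar w)]/P = (z - z w\bar w)/P = z$. The $G^J_1$-invariance is inherited from the fact that $f$ is built as $\ln$ of the reproducing kernel via \eqref{FK}, so $\omega_{k\mu}$ is automatically invariant under the action \eqref{dg}, \eqref{xxx}; alternatively one notes that $\mc{A}$ and $\dd w/P$ transform into themselves up to the cocycle factors that cancel in $\mc{A}\wedge\bar{\mc{A}}/P$ and $\dd w\wedge\dd\bar w/P^2$.
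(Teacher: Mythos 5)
Your proposal is correct and takes essentially the same approach as the paper: Proposition~\ref{prop2} is recalled there without a dedicated proof, but the paper's proof of its $n$-dimensional generalization, Theorem~\ref{mainTH}, is precisely this direct computation of the Hessian of the K\"ahler potential followed by regrouping into $\frac{2k}{P^2}\dd w\wedge\dd\bar{w}+\frac{\mu}{P}\mc{A}\wedge\bar{\mc{A}}$, with invariance read off from the transformation laws of $\mc{A}$ and $M\dd W$ (cf.~\eqref{DW1}, \eqref{DM1}). The only point to fix is a conjugation slip in your middle paragraph: the correct assignments are $h_{z\bar{w}}=\mu\eta/P$ and $h_{w\bar{z}}=\mu\bar{\eta}/P$ (as in \eqref{metrica}, and as your first, direct computation already gives), not the swapped values you later deduce from the ``reality relation''; with these, the matching against $\mc{A}=\dd z+\bar{\eta}\,\dd w$ and the final identity $h_{w\bar{w}}-\mu|\eta|^2/P=2k/P^2$ (which does follow from repeated use of $P+w\bar{w}=1$ exactly as you outline) go through.
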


The matrix of the balanced metric \eqref{herm} $h=h(\varsigma)$, $\varsigma:=(z,w)\in\C\times\mc{D}_1$, determined with the K\"ahler potential \eqref{keler}, reads
\begin{gather}\label{metrica}
h(\varsigma):=\left(\begin{matrix}h_{z\bar{z}} & h_{z\bar{w}}\\
 h_{w\bar{z}}=\bar{h}_{z\bar{w}} & h_{w\bar{w}}\end{matrix}\right) =\left(\begin{matrix} \frac{\mu}{P} & \mu \frac{\eta}{P} \\
\mu\frac{\bar{\eta}}{P} &
\frac{2k}{P^2}+\mu\frac{|\eta|^2}{P}\end{matrix}\right).
\end{gather}

The inverse of the matrix \eqref{metrica} reads
\begin{gather}\label{hinv}
h^{-1}(\varsigma)= \left(\begin{matrix}h^{z\bar{z}}&
 h^{z\bar{w}}\\h^{w\bar{z}}&h^{w\bar{w}}\end{matrix}\right) = \left(\begin{matrix}
 \frac{P}{\mu}+\frac{P^2|\eta|^2}{2k} & -\frac{P^2\eta}{2k} \\
-\frac{P^2\bar{\eta}}{2k} & \frac{P^2}{2k}\end{matrix}\right).
\end{gather}

If we introduce the notation
\begin{gather*}%\label{DETG1}
\mc{G}_M(z):=\det
(h_{\alpha\bar{\beta}})_{\alpha,\beta=1,\dots,n},\end{gather*}
then we f\/ind
\begin{gather}\label{g311}
\mc{G}_{\mc{D}^J_1}(z,w)=\frac{2k\mu}{(1-w\bar{w})^3},\qquad z\in\C, \qquad |w|<1.
\end{gather}

\section{Elements of geometry of the Siegel--Jacobi ball}\label{GMM}

\subsection{The balanced metric}\label{BLM}

The Jacobi algebra is the semi-direct sum $\got{g}^J_n:= \got{h}_n\rtimes \got{sp}(n,\R )_{\C}$ \cite{JGSP,sbj,nou}. The Heisenberg
algebra $\got{h}_n$ is generated by the boson creation (respectively, annihilation) operators~${a}_i^{\dagger}$~(${a}_i$),~$i,j =1,\dots,n$, which verify the canonical commutation relations
\begin{gather}\label{baza1M}
\big[a_i,a^{\dagger}_j\big]=\delta_{ij}, \qquad [a_i,a_j] = \big[a_i^{\dagger},a_j^{\dagger}\big]= 0 .
\end{gather}
$\got{h}_n$ is an ideal in $\got{g}^J_n$, i.e., $[\got{h}_n,\got{g}^J_n]=\got{h}_n$, determined by the commutation relations:
\begin{subequations}\label{baza3M}
\begin{gather}
\label{baza31}\big[a^{\dagger}_k,K^+_{ij}\big] = [a_k,K^-_{ij}]=0, \\
 [a_i,K^+_{kj}] = \tfrac{1}{2}\delta_{ik}a^{\dagger}_j+\tfrac{1}{2}\delta_{ij}a^{\dagger}_k ,\qquad
 \big[K^-_{kj},a^{\dagger}_i\big] = \tfrac{1}{2}\delta_{ik}a_j+\tfrac{1}{2}\delta_{ij}a_k , \\
 \big[K^0_{ij},a^{\dagger}_k\big] = \tfrac{1}{2}\delta_{jk}a^{\dagger}_i,\qquad
\big[a_k,K^0_{ij}\big]= \tfrac{1}{2}\delta_{ik}a_{j} .
\end{gather}
\end{subequations}
 $K^{\pm,0}_{ij}$ are the generators of the $\got{sp}(\R)_{\C}$ algebra:
\begin{subequations}\label{baza2M}
\begin{gather}
 [K_{ij}^-,K_{kl}^-] = [K_{ij}^+,K_{kl}^+]=0 , \qquad 2\big[K^-_{ij},K^0_{kl}\big] = K_{il}^-\delta_{kj}+K^-_{jl}\delta_{ki}\label{baza23}, \\
 2[K_{ij}^-,K_{kl}^+] = K^0_{kj}\delta_{li}+
K^0_{lj}\delta_{ki}+K^0_{ki}\delta_{lj}+K^0_{li}\delta_{kj}
\label{baza22}, \\
2\big[K^+_{ij},K^0_{kl}\big] = -K^+_{ik}\delta_{jl}-K^+_{jk}\delta_{li},\qquad
 2\big[K^0_{ji},K^0_{kl}\big] = K^0_{jl}\delta_{ki}-K^0_{ki}\delta_{lj} . \label{baza24}
\end{gather}
\end{subequations}
Applying the same arguments as in the proof of Remark~3 in~\cite{csg} (see also the Appendix in~\cite{csg}, where the notion of homogeneous reductive space in the meaning of Nomizu~\cite{nomizu} is recalled; also see~\cite{SB14}) to the Jacobi algebra~$\got{g}^J_n$ determined by~\eqref{baza1M}, \eqref{baza3M}, \eqref{baza2M}, we make the

\begin{Remark}\label{geom}
The Jacobi group $G^J_n$ is an unimodular, non-reductive, algebraic group of Harish-Chandra type. The Siegel--Jacobi domain~$\mc{D}^J_n$ is a~reductive, non-symmetric manifold associated to the Jacobi group~$G^J_n$ by the generalized Harish-Chandra embedding.
\end{Remark}

We have attached to the Jacobi group $G^J_n:=\mr{H}_n\rtimes\operatorname{Sp}(n,\R)_{\C}$ coherent states based on the Siegel--Jacobi ball~$\mc{D}^J_n$~\cite{sbj}. Perelomov's CS vectors \cite{perG} associated to the group~$G^J_n$ with Lie algebra the Jacobi algebra $\got{g}^J_n$ based on the complex $d$-dimensional ($d= n(n+3)/2$) manifold~-- the Siegel--Jacobi ball~-- $ \mc{D}^J_n\approx\mr{H}_n/\R\times \operatorname{Sp}(n,\R )_{\C}/\text{U}(n)= \C^n\times\mc{D}_n $~-- have been def\/ined as \cite{sbj}
\begin{gather}\label{csuX}
e_{z,W}= \exp ({\mb{X}})e_0,
\qquad \mb{X} := \sqrt{\mu}\sum_{i=1}^n z_i \mb{a}^{\dagger}_i + \sum_{i,j =1}^n w_{ij}\mb{K}^+_{ij},\qquad
 z\in \C^n, \qquad W\in\mc{D}_n.
\end{gather}

The vector $e_0$ appearing in (\ref{csuX}) verif\/ies the relations
\begin{gather}
\mb{a}_ie_0= 0,\qquad i=1, \dots, n, \nonumber\\
\mb{K}^+_{ij} e_0 \not= 0 ,\qquad
\mb{K}^-_{ij} e_0 = 0 ,\qquad
\mb{K}^0_{ij} e_0 = \frac{k}{4}\delta_{ij} e_0, \qquad i,j=1,\dots,n .\label{vacuma}
\end{gather}

In (\ref{vacuma}) $e_0=e^H_0\otimes e^K_0$, where $e^H_0$ is the minimum weight vector (vacuum) for the Heisenberg group~$\mr{H}_n$, while $e^K_0$ is the extremal weight vector for $\operatorname{Sp}(n,\R)_{\C}$ corresponding to the weight~$k$ in~(\ref{vacuma}), and~$\mu$ parametrizes the Heisenberg group \cite{sbj,nou,gem}. Holomorphic irreducible representations of the Jacobi group based on Siegel--Jacobi domains have been studied in mathematics \cite{bb,bs,TA90,TA92,TA99}. In \cite{jac1,nou,gem} we have compared our results on the geometry of~$\mc{D}^J_n$ and holomorphic representations of~$G^J_n$ obtained using CS with those of the mentioned mathematicians.

The scalar product $K:\mc{D}^J_n\times \mc{D}^J_n \rightarrow\C$, $K(\bar{x},\bar{V};y,W)=(e_{x,V},e_{y,W})_{k\mu} $ is \cite{sbj,gem}:
\begin{gather}
(e_{x,V},e_{y,W})_{k\mu} = \det (U)^{k/2} \exp\mu
F(\bar{x},\bar{V};y,W), \qquad U= (\un-W\bar{V})^{-1}, \nonumber \\
 2 F(\bar{x},\bar{V};y,W) = 2(x, U y) +( V\bar{y},U y) +( x,U W\bar{x}) .\label{KHKX}
\end{gather}
If we denote $\zeta = (x,V)$, $\zeta' = (y,W)$, then we f\/ind for the normalized Bergman kernel \eqref{kmic} of the Siegel--Jacobi ball the expression
\begin{gather}\label{kapS}
\kappa_{\mc{D}^J_n}(\zeta,\zeta')=\kappa_{\mc{D}_n}(V,W)\exp\mu[
2F(\zeta,\zeta')- F(\zeta)-F(\zeta')],\end{gather}
where $\kappa_{\mc{D}_n}(V,W)$ is the normalized Bergman kernel of the Siegel ball
\begin{gather*}%\label{kapD}
\kappa_{\mc{D}_n}(V,W)= \det{}^{k/2}\left[\frac{(\un-V\bar{V})(\un-W\bar{W})}{(\un-W\bar{V})^2}\right]. \end{gather*}
We f\/ind for the Calabi diastasis \eqref{DIA} of the Siegel--Jacobi ball the expression
\begin{gather}\label{DiaS}
D_{\mc{D}^J_n}(\zeta,\zeta')=D_{\mc{D}_n}(V,W)+2\mu\big[\Re F(\zeta)+\Re F(\zeta')- 2\Re F(\zeta,\zeta')\big],\\
%\label{diaD}
D_{\mc{D}_n}(V,W)=-2\ln |\kappa_{\mc{D}_n}(V,W)|.\nonumber
\end{gather}
In particular, the reproducing kernel $K(z,W)=(e_{z,W},e_{z,W})_{k\mu}$, $z\in\C^n$, $W\in\mc{D}_n$ is
\begin{gather}\label{kul}
K(z,W) = \det(M)^{k/2}\exp\mu F, \qquad M=(\un-W\bar{W})^{-1},
\\
2F =2\bar{z}^tMz+z^t\bar{W}Mz+\bar{z}^tMW\bar{z},\qquad %\label{FfF}\\
2F = 2\bar{\eta}^t\eta -\eta^t\bar{W}\eta-\bar{\eta}^tW\bar{\eta},\nonumber\\
\label{etaZ}
\eta=M(z+W\bar{z}), \qquad z=\eta-W\bar{\eta}.
\end{gather}
The Hilbert space of holomorphic functions $\mc{F}_K$ associated to the kernel $K$ given by~\eqref{kul} is endowed with the scalar product of the type~\eqref{PAR}
\begin{gather*}%\label{ofiX}
(\phi ,\psi ) _{\mc{F}_{K}}= \Lambda_n \int_{z\in\C^n;\,W=W^t;\,
1-W\bar{W}>0}\bar{f}_{\phi}(z,W)f_{\psi}(z,W)Q K^{-1} \dd z \dd W,\\
\dd z = \prod_{i=1}^n \dd \Re z_i \dd \Im z_i, \qquad \dd W = \prod_{1\le i\le j \le n} \dd \Re w_{ij} \dd \Im {w}_{ij},
\end{gather*}
 where the normalization constant $\Lambda_n$ is given by
\begin{gather*}%\label{ofi1X}
\Lambda_n = \mu^n\frac{k-3}{2\pi^{n(n+3)/2}}\prod_{i=1}^{n-1} \frac{\big(\frac{k-3}{2}-n+i\big)\Gamma (k+i-2)}{\Gamma [k+2(i-n-1)]} ,
\end{gather*}
and the density of volume is~\cite{sbj} (see also Lemma~\ref{lemmahua})
\begin{gather}\label{QQQ}
Q (z,W)= \det (\un-W\bar{W})^{-(n+2)}.
\end{gather}

The manifold $\mc{D}^J_n$ has the K\"ahler potential~\eqref{kelerX}, $f=\log K$, with $K$ given by~\eqref{kul},
\begin{gather}
f = -\tfrac{k}{2}\log \det (\un-W\bar{W})\nonumber\\
\hphantom{f=}{} +\mu \big\{ \bar{z}^t(\un-W\bar{W})^{-1}{z}
+ \tfrac{1}{2}z^t\big[\bar{W}(\un-W\bar{W})^{-1}\big]z +\tfrac{1}{2}\bar{z}^t \big[ (\un-W\bar{W})^{-1}W\big]\bar{z} \big\}.\label{kelerX}
\end{gather}
Because the symmetry of the matrix $W\in\mc{D}_n$, we write down the K\"ahler two-form on the Siegel--Jacobi ball as
\begin{gather}
-\ii \omega _{\mc{D}^J_n}(z,W) = h_{i\bar{j}}\dd z_i\wedge\dd \bar{z}_j+ \sum_{p\leq q}(h_{i\bar{p}\bar{q}}\dd z_i\wedge\dd
 \bar{w}_{pq}-\bar{h}_{i\bar{p}\bar{q}}\dd \bar{z}_i\wedge \dd w_{pq}) \nonumber\\
\hphantom{-\ii \omega _{\mc{D}^J_n}(z,W) =}{} + \sum_{p\le q; m\leq n}h_{pq\bar{m}\bar{n}}\dd
 w_{pq}\wedge \dd \bar{w}_{mn}.\label{omww}
\end{gather}
We use the following notation for the matrix of the metric
\begin{gather}\label{math}
 h= \left(\begin{matrix} h_1 &h_2\\h_3& h_4\end{matrix}\right) =
\left(\begin{matrix} h_{i\bar{j}}& h_{i\bar{p}\bar{q}} \\
 h_{pq\bar{i}} & h_{pq\bar{m}\bar{n}} \end{matrix}\right) \in
M(n(n+3)/2,\C),\\
 p\leq q, \qquad m\leq n, \qquad h=h^{*},\nonumber
\end{gather}
where $h_1\in M(n,\C)$, $h_2\in M(n\times n(n+1)/2,\C)$, $h_3\in
M(n(n+1)/2\times n,\C)$, $h_4\in M(n(n+1)/2,\C)$.

We use the following convention: indices of $z\in M(n\times 1,\C)$ are denoted with $i$, $j$, $k$, $l$; indices of $W=W^t$, $W\in M(n,\C)$ are denoted with $m$, $n$, $p$, $q$, $r$, $s$, $t$, $u$, $v$.

Let as write the ``inverse'' $h^{-1}$ of the matrix $h$ \eqref{math} as
\begin{gather}\label{matrh1}
h^{-1}=\left(\begin{matrix}h^1&h^2\\h^3&h^4\end{matrix}\right) =
\left(\begin{matrix}(h^1)_{i\bar{j}}&(h^2)_{i\bar{p}\bar{q}}\\(h^3)_{pq\bar{i}}&(h^4)_{pq\bar{m}\bar{n}}\end{matrix}\right), \qquad
p\leq q, \qquad m\leq n,
\end{gather}
where the matrices $h^1$, $h^2$, $h^3$, $h^4$ have the same dimensions as the matrices $h_1$, $h_2$, $h_3$, respectively~$h_4$.

In fact, we shall determine the ``inverse'' \eqref{matrh1} of \eqref{math} such that
\begin{gather}\label{ciudat}
\left(\begin{matrix} h_1 &h_2\\h_3& h_4\end{matrix}\right)
\left(\begin{matrix}h^1&h^2\\h^3&h^4\end{matrix}\right)=
\left(\begin{matrix} \un & 0\\ 0 & \Delta \end{matrix}\right),
\end{gather}
where $\Delta$ is def\/ined in \eqref{mircea}.

It is useful to introduce the notation
\begin{gather*}%\label{fqq}
 f_{pq}:=1-\tfrac{1}{2}\delta_{pq}.
\end{gather*}

We shall prove the following (partial results have been
presented in \cite[Proposition~11]{JGSP}, \cite[Proposition 3.11]{sbj} and \cite[Proposition 1]{nou}):
\begin{Theorem}\label{mainTH}
The K\"ahler two-form $\omega_{\mc{D}^J_n}$, associated with the balanced metric of the Siegel--Jacobi ball $\mc{D}^J_n$, $G^J_n$-invariant to the action~\eqref{TOIU} is
\begin{gather}
- \ii\omega_{\mc{D}^J_n}(z,W) = \tfrac{k}{2}\tr (\mc{B}\wedge\bar{\mc{B}})
 +\mu \tr (\mc{A}^t\bar{M}\wedge \bar{\mc{A}}), \qquad \mc{A} =\dd z+ \dd W\bar{\eta},\nonumber\\
\mc{B} = M\dd W,\qquad M = (\un-W\bar{W})^{-1}.\label{aabX}
\end{gather}
The matrix \eqref{math} of the metric on $\mc{D}^J_n$ has the matrix elements~\eqref{hcomp}:
\begin{subequations}\label{hcomp}
\begin{gather}
h_{i\bar{j}} =\mu \bar{M}_{ij},\label{hc11}\\
h_{i\bar{p}\bar{q}} = \mu (\eta_q\bar{M}_{ip}
+\eta_p\bar{M}_{iq})f_{pq},\label{hc12}\\
h_{pq\bar{i}} = \mu (\bar{\eta}_q\bar{M}_{pi} +\bar{\eta}_p\bar{M}_{qi})f_{pq},\label{hc21}\\
h_{pq\bar{m}\bar{n}} =\frac{k}{2}h^k_{pq\bar{m}\bar{n}}+\mu
h^{\mu}_{pq\bar{m}\bar{n}},\label{hc22}\\
h^k_{pq\bar{m}\bar{n}} =
2(M_{mp}M_{nq}+M_{mq}M_{np})-2M_{mp}(M_{np}\delta_{pq}+M_{mq}\delta_{mn})
 + M^2_{mp}\delta_{pq}\delta_{mn}\nonumber \\
\hphantom{h^k_{pq\bar{m}\bar{n}}}{}
 = 2M_{mp}M_{nq}(1-\delta_{pq})+2M_{mq}M_{np}(1-\delta_{mn})+ M^2_{mp}\delta_{pq}\delta_{mn} ,\label{hc23}\\
h^{\mu}_{pq\bar{m}\bar{n}} = \bar{\eta}_p(\eta_n M_{mq}+\eta_m
M_{nq})+\bar{\eta}_q({\eta}_nM_{mp}+\eta_mM_{np})
 -\bar{\eta}_p(\eta_n M_{mp}\nonumber\\
 \hphantom{h^{\mu}_{pq\bar{m}\bar{n}}=}{} +\eta_m
M_{np})\delta_{pq}-{\eta}_m(\bar{\eta}_pM_{mq}+\bar{\eta}_qM_{mp})\delta_{mn}
 +\bar{\eta}_p\eta_mM_{mp}\delta_{pq}\delta_{mn}\nonumber\\
 \hphantom{h^{\mu}_{pq\bar{m}\bar{n}}}{}
 = [\bar{\eta}_p(\eta_n\bar{M}_{qm}+\eta_m\bar{M}_{qn})+\bar{\eta}_q(\eta_n\bar{M}_{pm}+\eta_m\bar{M}_{pn})]f_{pq}f_{mn}.\label{hc24}
\end{gather}
\end{subequations}
The ``inverse'' $h^{-1}$ of the metric matrix $h$ which verif\/ies \eqref{ciudat}, with components \eqref{hcomp}, obtained with the inverse \eqref{kpqmn} of $h^k$ \eqref{hc23} has the elements $h^1$--$h^4$ given by
\begin{subequations}\label{nr11}
\begin{gather}
\big(h^1\big)_{ij}
 =\theta\bar{M}^{-1}_{ij},
 \qquad \theta = \frac{1}{\mu}+\alpha\frac{2}{k},\qquad
\alpha=\eta^t\bar{M}^{-1}\bar{\eta}=\bar{\eta}_nS_n, \qquad S_n=\sum\eta_q\bar{M}^{-1}_{qn},\!\!\label{NR1}\\
\big(h^2\big)_{i\bar{m}\bar{n}} =-\frac{1}{k}\big(S_n\bar{M}^{-1}_{im}+S_m\bar{M}^{-1}_{in}\big),\label{NR2}\\
\big(h^3\big)_{mn\bar{i}} =-\frac{1}{k}\big(\bar{S}_n\bar{M}^{-1}_{mi}+\bar{S}_m\bar{M}^{-1}_{ni}\big),\label{NR3}\\
\big(h^4\big)_{pq\bar{m}\bar{n}} =
\frac{2}{k}(h^k)^{-1}_{pq\bar{m}\bar{n}}=\frac{1}{k}\big(\bar{M}^{-1}_{qn}\bar{M}^{-1}_{pm}+\bar{M}^{-1}_{pn}\bar{M}^{-1}_{qm}\big). \label{NR4}
\end{gather}
\end{subequations}
The determinant of the metric matrix $h$ is
\begin{gather}\label{DTH}
\mc{G}_{\mc{D}^J_n}(z,W)= \det h _{\mc{D}^J_n}(z,W)= \left(\frac{k}{2}\right)^{\frac{n(n+1)}{2}}\mu^n\det
(\un-W\bar{W})^{-(n+2)}.\end{gather}
In the case $n=1$ formulas \eqref{hcomp}, \eqref{nr11}, \eqref{DTH} became the formulas \eqref{metrica}, \eqref{hinv}, respectively~\eqref{g311}, with $2k\leftrightarrow\frac{k}{2}$.
\end{Theorem}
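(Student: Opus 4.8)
The plan is to start from the Kähler potential \eqref{kelerX}, which was computed in \cite{sbj} from the scalar product \eqref{kul} of coherent state vectors, and then simply differentiate. The first step is to compute the $(1,1)$-form $-\ii\omega_{\mc{D}^J_n} = \pa\bar\pa f$ directly, using the variables $\eta = M(z+W\bar z)$ of \eqref{etaZ}. The key observation — already exploited for $n=1$ in Proposition~\ref{prop2} — is that the change of variables ${\rm FC}\colon(\eta,W)\to(z,W)$ simplifies the mixed part of the potential: with $2F = 2\bar\eta^t\eta - \eta^t\bar W\eta - \bar\eta^t W\bar\eta$ one finds, using $\dd z = \dd\eta - \dd W\bar\eta - W\dd\bar\eta$ (from $z=\eta-W\bar\eta$), that $\pa\bar\pa$ of the $\mu$-part collapses to $\mu\tr(\mc{A}^t\bar M\wedge\bar{\mc{A}})$ with $\mc{A}=\dd z+\dd W\bar\eta=\dd\eta-W\dd\bar\eta$; the $-\tfrac{k}{2}\log\det(\un-W\bar W)$ term gives $\tfrac{k}{2}\tr(\mc{B}\wedge\bar{\mc{B}})$ with $\mc{B}=M\dd W$ via the standard identity $\pa\bar\pa\log\det N = -\tr(N^{-1}\pa N\wedge N^{-1}\bar\pa N)$ adapted to $N=\un-W\bar W$. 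This establishes \eqref{aabX}.

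Next I would extract the matrix entries \eqref{hcomp} by expanding $\tr(\mc{B}\wedge\bar{\mc{B}})$ and $\tr(\mc{A}^t\bar M\wedge\bar{\mc{A}})$ in the independent coordinates $z_i$ and $w_{pq}$ with $p\le q$. This is where the bookkeeping is heaviest: because $W$ is symmetric, the sums over $\dd w_{pq}$ must be restricted to $p\le q$, which produces the symmetrization factors $f_{pq}=1-\tfrac12\delta_{pq}$ and the Kronecker-delta corrections displayed in \eqref{hc23} and \eqref{hc24}. Concretely, $h_{i\bar j}=\mu\bar M_{ij}$ is immediate from the $\dd z_i\wedge\dd\bar z_j$ coefficient; $h_{i\bar p\bar q}$ comes from the cross terms $\dd z_i\wedge\dd\bar w_{pq}$ in $\mc{A}$; and $h_{pq\bar m\bar n}$ combines the $\mc{B}$-contribution $\tfrac k2 h^k_{pq\bar m\bar n}$, for which I would use the Remark just before \eqref{NEMM} (equations \eqref{NEMM}, \eqref{neM}), together with the $\mu$-contribution $h^\mu_{pq\bar m\bar n}$ coming from the $\dd W\bar\eta$ piece of $\mc{A}$, again symmetrized over $p\le q$ and $m\le n$.

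For the inverse \eqref{nr11} I would verify \eqref{ciudat} blockwise rather than inverting directly. Observe that the metric has the quasi-triangular structure induced by ${\rm FC}$: in the $(\eta,W)$ coordinates the $z$–$W$ block decouples, so $h$ is conjugate to a block-diagonal-plus-shear matrix whose diagonal blocks are $\mu\bar M$ (the $z$-block, with inverse $\tfrac1\mu\bar M^{-1}$) and $\tfrac k2 h^k$ (the $W$-block, whose ``inverse'' in the sense of Lemma~\ref{CUL} is $\tfrac2k k_{mn\bar u\bar v}$ from \eqref{kpqmn}). Multiplying out the four products $h_1h^1+h_2h^3$, $h_1h^2+h_2h^4$, $h_3h^1+h_4h^3$, $h_3h^2+h_4h^4$ and checking they equal $\un$, $0$, $0$, $\Delta$ respectively reduces — after the $h^4$ product is handled by Lemma~\ref{CUL} — to a few contractions of $M$, $\bar M^{-1}=\bar N$, $\eta$ and the auxiliary quantities $S_n=\sum_q\eta_q\bar M^{-1}_{qn}$ and $\alpha=\eta^t\bar M^{-1}\bar\eta$; the definition $\theta=\tfrac1\mu+\tfrac2k\alpha$ in \eqref{NR1} is exactly what makes the $(1,1)$-block close. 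Finally, the determinant \eqref{DTH} follows from the block structure: $\det h = \det(\mu\bar M)\cdot\det(\tfrac k2 h^k)$ up to the Jacobian accounting for the $p\le q$ restriction; since $\det(\un-W\bar W)=\det\bar M^{-1}$ and $\det h^k$ is a known power of $\det(\un-W\bar W)$ (it is \eqref{neM}, essentially the Siegel-ball metric determinant), collecting the powers gives $(k/2)^{n(n+1)/2}\mu^n\det(\un-W\bar W)^{-(n+2)}$, consistent with the volume density \eqref{QQQ}. The specialization to $n=1$ recovering \eqref{metrica}, \eqref{hinv}, \eqref{g311} with $2k\leftrightarrow k/2$ is then a direct substitution. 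The main obstacle throughout is purely combinatorial: correctly tracking the symmetrization factors $f_{pq}$ and the diagonal corrections forced by $W=W^t$ when passing between the unrestricted sums natural in the trace formulas and the restricted sums $p\le q$ used in \eqref{math}.
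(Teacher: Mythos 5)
Your proposal follows essentially the same route as the paper: differentiate the K\"ahler potential \eqref{kelerX} using the FC variables $\eta$, organize the symmetric-matrix bookkeeping with the factors $f_{pq}$, handle the $W$-block of the inverse through Lemma~\ref{CUL}, and factor the determinant as $\det h_1\cdot\det\big(h_4-h_3h_1^{-1}h_2\big)$ together with \eqref{Qq}. The only differences are cosmetic --- the paper first computes the matrix elements \eqref{hcomp} and then resums them into \eqref{aabX}, derives the inverse from the Schur-complement formulas of \cite{lutke} rather than verifying \eqref{ciudat} blockwise, and additionally spends a few lines checking the $G^J_n$-invariance of \eqref{aabX} via the transformation formulas \eqref{DW1}, \eqref{DM1}, a routine step your sketch omits.
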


\begin{proof}Firstly, we determine the matrix elements of the metric $h$ on the Siegel--Jacobi disk $\mc{D}^J_n$ applying formula~\eqref{Ter} to the K\"ahler potential~\eqref{kelerX} as in~\eqref{omww}.

We use the formulas (see \cite[equations~(4.23)]{nou})
\begin{subequations}\label{LIKK}
\begin{gather}
\frac{\pa M_{ab} }{\pa w_{ik}} = M_{ai}X_{kb}+ M_{ak}X_{ib}- M_{ai}X_{ib}\delta_{ik},\qquad \text{where} \quad X=X^t=\bar{W}M=\bar{M}\bar{W},\label{LIK}\\
\frac{\pa X_{ab} }{\pa w_{ik}} = X_{ai}X_{bk}+X_{ak}X_{ib}-X_{ai}X_{ib}\delta_{ik} , \\
\frac{\pa \bar{ X}_{ab} }{\pa w_{ik}} =M_{ai}M_{bk}+M_{ak}M_{bi}-M_{ai}M_{bk}\delta_{ik} \label{LIK2}.
\end{gather}
\end{subequations}

From \eqref{etaZ}, we get
\begin{subequations}\label{morD}
\begin{gather}
\frac{\pa \eta_q}{\pa z_l} = M_{ql},\\
\frac{\pa \bar{\eta}_q}{\pa z_j} ={X}_{qj}, \\
\frac{\pa \eta_t}{\pa w_{pq}} = M_{tp}\bar{\eta}_q +M_{tq}\bar{\eta}_p- M_{tp}\bar{\eta}_p\delta_{pq}, \\
\frac{\pa \bar{\eta}_n}{\pa w_{pq}} =
\bar{\eta}_pX_{qn}+\bar{\eta}_qX_{pn}-\bar{\eta}_qX_{pn}\delta_{pq} .
\end{gather}
\end{subequations}

With \eqref{mircea}, \eqref{LIKK} and \eqref{morD}, we f\/ind the matrix elements~\eqref{hcomp} of the metric of the Siegel--Jacobi ball.

Now we prove \eqref{hc23}. We calculate
\begin{gather}\label{HKJ} h^k _{mn\bar{p}\bar{q}}=\frac{\pa}{\pa
 \bar{w}_{pq}} J_{mn}, \qquad J_{mn}= \frac{\pa}{\pa w_{mn}}\ln\det M. \end{gather}
With formula
\begin{gather*}%\label{A11}
\frac{\dd (\det A)}{\dd t}=\det A \tr\left(A^{-1}\frac{\pa A}{\pa t}\right),\end{gather*} we have
\begin{gather*}J_{mn}= M^{-1}_{ij}\frac{\pa M_{ji}}{\pa w_{mn}}.\end{gather*}
 We f\/ind \begin{gather*}J_{mn}=X_{nm}+X_{mn}-X_{mn}\delta_{mn}=2X_{mn}f_{mn}.\end{gather*}
In order to obtain $ h^k _{mn\bar{p}\bar{q}}$, we calculate
\begin{gather*} \frac{\pa X_{nm}}{\pa \bar{w}_{pq}}= \frac{\pa \bar{w}_{nk}}{\pa
 \bar{w}_{pq}}M_{kn}+w_{nk}\frac{\pa M_{kb}}{\pa \bar{w}_{pq}}.\end{gather*}
But with \begin{gather*}\frac{\pa M_{km}}{\pa
 \bar{w}_{pq}}=\overline{\left(\frac{M_{mk}}{\pa w_{pq}}\right)}, \qquad \text{and with} \qquad \un+\bar{W}\bar{X}=\bar{M},\end{gather*} we f\/ind
\begin{gather*} \frac{\pa X_{nm}}{\pa
 \bar{w}_{pq}}=2(M_{pm}\bar{M}_{nq}+M_{qm}\bar{M}_{np}-M_{pm}\bar{M}_{nq}\delta_{pq})
-(2M_{pm}\bar{M}_{mq}-M_{pm}\bar{M}_{mq}\delta_{pq})\delta_{mn},\end{gather*}
i.e., we reobtain formula \eqref{hc23}.

With \eqref{hc23}, we have
\begin{gather}
h^{k}_{pp\bar{m}\bar{m}} = M^2_{mp}, \nonumber\\
h^{k}_{pp\bar{m}\bar{n}} = 2M_{mp}M_{nq},\qquad m<n,\nonumber\\
h^{k}_{pq\bar{m}\bar{m}} = 2M_{mp}M_{mq},\qquad p<q,\nonumber\\
h^{k}_{pq\bar{m}\bar{n}} = 2(M_{mp}M_{nq}+M_{mq}M_{np}),\qquad p<q,\qquad m<n.\label{620Ec}
\end{gather}

In order to f\/ind $-\ii \frac{2}{k}\dd \omega_{\mc{D}_n}(W) $, we make
the summation
\begin{gather}
 \sum h^k_{pp\bar{m}\bar{m}}\dd w_{pp}\wedge\dd \bar{w}_{mm} + \sum_{m<n} h^k_{pp\bar{m}\bar{n}}\dd
w_{pp}\wedge\dd \bar{w}_{mn} \nonumber\\
\qquad\quad{} +\sum_{p<q} h^k_{pq\bar{m}\bar{m}}\dd
w_{pq}\wedge\dd \bar{w}_{mm} + \sum_{p<q,\, m<n} h^k_{pq\bar{m}\bar{n}}\dd
w_{pq}\wedge\dd \bar{w}_{mn}\nonumber\\
\qquad{} = \sum M_{mp}\bar{M}_{pm}\dd
w_{pp}\wedge\dd \bar{w}_{mm} + 2\sum_{m<n}M_{mp}M_{pn}\dd
w_{pp}\wedge\dd \bar{w}_{mn} \nonumber\\
\qquad\quad{} + 2\sum_{p<q}M_{mp}M_{mq} + 2\sum_{p<q, \, m<n}(M_{mp}M_{nq}+M_{mq}M_{np}) \dd
w_{pq}\wedge\dd \bar{w}_{mn} \nonumber\\
\qquad{} = \sum M_{mp}\bar{M}_{pm}\dd
w_{pp}\wedge\dd \bar{w}_{mm} + \sum_{m\not= n}M_{mp}M_{pn}\dd
w_{pp}\wedge\dd \bar{w}_{mn} \nonumber\\
\qquad\quad{} + \sum_{p\not= q}M_{mp}M_{nq}\dd w_{pq}\wedge \dd\bar{w}_{mm}+
\sum_{p\not= q,\, m\not= n}M_{mp}M_{nq} \dd
w_{pq}\wedge\dd \bar{w}_{mn}\nonumber \\
\qquad{} = (M\dd W)_{mq}\wedge(\bar{M}\dd\bar{W})_{qm}.\label{sumL}
\end{gather}
We write down \eqref{sumL} as
 \begin{gather}\label{omd}
-\ii \frac{2}{k}\dd \omega_{\mc{D}_n}(W) = \tr(M\dd W\wedge \bar{M}\dd\bar{W}).
\end{gather}
Now we make the summation
\begin{gather*}%\label{omdL}
\sum_{p\le q}h^{\mu}_{i\bar{p}\bar{q}}\dd z_i\wedge
 \dd\bar{w}_{pq} = \sum h^{\mu}_{i\bar{p}\bar{p}}\dd z_i\wedge
 \dd\bar{w}_{pp} + \sum_{p< q}h^{\mu}_{i\bar{p}\bar{q}}\dd z_i\wedge
 \dd\bar{w}_{pq} \\
 \hphantom{\sum_{p\le q}h^{\mu}_{i\bar{p}\bar{q}}\dd z_i\wedge \dd\bar{w}_{pq}}{}
 = \bar{M}_{ip}\eta_p\dd z_i\wedge \dd
\bar{w}_{pp}+\sum_{p<q}(\bar{M}_{ip}\eta_q+\bar{M}_{iq}\eta_p)\dd
z_i \wedge \dd\bar{w}_{pq} \\
 \hphantom{\sum_{p\le q}h^{\mu}_{i\bar{p}\bar{q}}\dd z_i\wedge \dd\bar{w}_{pq}}{} = (M\dd z)_p\wedge(\dd \bar{W}\eta)_p.
\end{gather*}
We calculate the sum \begin{gather*}H^{\mu}:=\sum_{p\le q,m\le n}h^{\mu}_{pq\bar{m}\bar{n}}\dd
w_{pq}\wedge \dd \bar{w}_{mn}. \end{gather*}
We have
\begin{gather*}
H^{\mu} = \sum h^{\mu}_{pp\bar{m}\bar{m}}\dd
w_{pp}\wedge \bar{w}_{mm} + \sum_{m<n}h^{\mu}_{pp\bar{m}\bar{n}}\dd
w_{pp}\wedge \bar{w}_{mn} \\
\hphantom{H^{\mu} =}{} + \sum_{p<q}h^{\mu}_{pq\bar{m}\bar{m}}\dd
w_{pq}\wedge \bar{w}_{mm} + \sum_{p< q,\, m< n}h^{\mu}_{pq\bar{m}\bar{n}}\dd
w_{pq}\wedge \bar{w}_{mn} ,
\end{gather*}
and f\/inally we get
\begin{gather}\label{omdLL}
H^{\mu} = M_{mq}\dd w_{qp}\wedge \dd \bar{w}_{mn}\eta_n= (M\dd
W\bar{\eta})_m\wedge (\dd \bar{W}\eta)_m.
\end{gather}
Putting together \eqref{omd}, \eqref{sumL} and \eqref{omdLL}, we have proved~\eqref{aabX}.

In order to check out the homogeneity of the K\"ahler two-form $\omega_{\mc{D}^J_n}$ \eqref{aabX}, we use the formulas
 \begin{gather}\label{DW1}
\dd W_1=(Wq^*+p^*)^{-1}\dd W (\bar{q}W+\bar{p})^{-1},\\
\label{DM1} M_1=(q\bar{W}+p)M(W q^*+p^*),\\
\dd z_1= (Wq^*+p^*)^{-1}(\dd W V +\dd z), \qquad V=-q^*(Wq^*+p^*)^{-1}[\eta+\alpha-W(\bar{\eta}+\bar{\alpha})]-\bar{\alpha}, \nonumber\\
 \eta_1=p(\alpha+\eta)+q(\bar{\alpha}+\bar{\eta}),\qquad
\mc{A}_1= (Wq^*+p^*)^{-1}\mc{A}.\nonumber\end{gather}

Now we calculate the ``inverse'' \eqref{matrh1} $h^{-1}$ of the matrix $h$ whose elements are given by \eqref{hcomp}.

We have (see, e.g., in \cite[(1) and (2), p.~30]{lutke}): if the matrices $h_1$, $h_4-h_3h^{-1}_1h_2$, $h_4$ and respectively $h_1-h_2h^{-1}_4h_3$
are nonsingular, then
\begin{subequations}\label{inv}
\begin{gather}
h^1 = \big(h_1-h_2h^{-1}_4h_3\big)^{-1} %\label{inv1}\\
 = h_1^{-1}+ h_1^{-1}h_2h^4h_3 h_1^{-1},\label{inv11}\\
h^2 = -h^{-1}_1h_2h^4,\label{inv2}\\
h^3 = - h^4 h_3h_1^{-1},\label{inv31}\\
h^4 = (h_4-h_3h^{-1}_1h_2)^{-1}.\label{inv4}
\end{gather}
\end{subequations}
In order to calculate $h^4$, we write~\eqref{inv4} as \begin{gather}\label{NR5}(h^4)^{-1}=h_4-\mu h^I, \qquad\text{where} \quad
h^I:=\frac{1}{\mu}h_3h_1^{-1}h_2.\end{gather}
With formulas \eqref{hc11}--\eqref{hc21}, we get
\begin{gather*}
\big(h^I\big)_{pq\bar{m}\bar{n}} =h_{pq\bar{i}}(h_1)^{-1}_{ij}h_{j\bar{m}\bar{n}} =
(\bar{\eta}_q\bar{M}_{pi}+\bar{\eta}_p\bar{M}_{qi})\bar{M}^{-1}_{ij}(\eta_n\bar{M}_{jm}+\eta_m\bar{M}_{jn})
f_{pq}f_{mn}\\
\hphantom{\big(h^I\big)_{pq\bar{m}\bar{n}}}{}
=(\bar{\eta}_q\bar{M}_{pi}+\bar{\eta}_p\bar{M}_{qi})(\eta_n\delta_{im}+\eta_m\delta_{im})f_{pq}f_{mn}\\
\hphantom{\big(h^I\big)_{pq\bar{m}\bar{n}}}{}
 = [\bar{\eta}_q(\eta_n\bar{M}_{pm}+\eta_m\bar{M}_{pn})+\bar{\eta}_p(\eta_n\bar{M}_{qm}+\eta_m\bar{M}_{qn})]f_{pq}f_{mn},
\end{gather*}
and with equation \eqref{hc24} we get
\begin{gather*}h^I=h^{\mu}.\end{gather*}
With \eqref{NR5}, we obtain
\begin{gather}\label{h4h1}
\big(h^4\big)^{-1}=\frac{k}{2}h^k,\end{gather}
and \eqref{NR4}, where $h^k$ has the expression~\eqref{hc23}, while $(h^k)^{-1}$ has the expression \eqref{kpqmn}.

In order to calculate $h^2$, taking into account \eqref{h4h1}, we write~\eqref{inv2} as
\begin{gather}\label{hjj}
h^2=-(h_1)^{-1}h^J,\qquad\text{where} \quad h^J:=\frac{2}{k}h_2\big(h^k\big)^{-1}. \end{gather}
With formulas \eqref{hc12}, \eqref{ofi}, we have
\begin{gather*}%\label{jhj}
\big(h^J\big)_{j\bar{m}\bar{n}} =2\frac{\mu}{k}\sum_{p\leq
 q}f_{pq}(\eta_q\bar{M}_{jp}+\eta_p\bar{M}_{jq})k_{pq\bar{m}\bar{n}}\\
\hphantom{\big(h^J\big)_{j\bar{m}\bar{n}}}{}
= 2\frac{\mu}{k}\left[\sum_{p}\eta_p(\un-W\bar{W})^{-1}_{pj}(\un-W\bar{W})_{np}(\un-\bar{W}W)_{pm}\right]\\
\hphantom{\big(h^J\big)_{j\bar{m}\bar{n}}=}{}+
\frac{\mu}{k}\sum_{p<q}\big[\eta_q(\un-W\bar{W})^{-1}_{pj}+\eta_p(\un-W\bar{W})^{-1}_{pj}\big] \\
\hphantom{\big(h^J\big)_{j\bar{m}\bar{n}}=}{}
\times\big[(\un-W\bar{W})_{nq}(\un-\bar{W}W)_{pm}+(\un-W\bar{w})_{np}(\un-\bar{W}W)_{qm}\big]\\
\hphantom{\big(h^J\big)_{j\bar{m}\bar{n}}}{}
= \frac{\mu}{k}\left[\sum_{p\not=q}(\eta_q(\un-W\bar{W})^{-1}_{pj}(\un-W\bar{W})_{nq}(\un-\bar{W}W)_{pm}\right.\\
\hphantom{\big(h^J\big)_{j\bar{m}\bar{n}}=}{}
+\sum_{p\not=q}\eta_p(\un-W\bar{W})^{-1}_{qj}(\un-W\bar{W})_{nq}(\un-\bar{W}{W})_{pm}\\
\left.\hphantom{\big(h^J\big)_{j\bar{m}\bar{n}}=}{}
+2\sum_p\eta_p(\un-W\bar{W})^{-1}_{pj}(\un-W\bar{W})_{np}(\un-\bar{W}W)_{pm}\right]\\
\hphantom{\big(h^J\big)_{j\bar{m}\bar{n}}=}{}
=\frac{\mu}{k}(S_n\delta_{jm}+S_m\delta_{jn}),\qquad \text{where} \quad S_n:=\sum_q\eta_q\bar{M}^{-1}_{qn}.
\end{gather*}

With \eqref{hjj}, we get \eqref{NR2} and similarly for \eqref{NR3}.

In order to calculate $h^1$, we write \eqref{inv11} as
\begin{gather*}%\label{hkk1}
h^1=h^{-1}_1-h^Kh^{-1}_1,\qquad\text{where} \quad h^K:= h^2h_3.\end{gather*}
With formulas \eqref{NR2} and \eqref{hc21}, we get
\begin{gather*}
\big(h^K\big)_{ik} =-\frac{\mu}{k}\sum_{m\leq
 n}f_{mn}\big(S_n\bar{M}^{-1}_{im}+S_m\bar{M}^{-1}_{in}\big)(\bar{\eta}_n\bar{M}_{mk}+\bar{\eta}_m\bar{M}_{nk})\\
\hphantom{\big(h^K\big)_{ik} }{} =
-\frac{\mu}{k}\left[\frac{1}{2}\sum_m4S_m\bar{\eta}_m\bar{M}^{-1}_{im}\bar{M}_{mk}
 + \sum_{m< n}\big(S_n\bar{M}^{-1}_{im} + S_m\bar{M}^{-1}_{in}\big)\big(\bar{\eta}_n\bar{M}_{mk} + \bar{\eta}_m\bar{M}_{nk}\big)\right]\\
\hphantom{\big(h^K\big)_{ik} }{}= -\frac{2}{k}\mu\alpha\delta_{ik},
\end{gather*}
where \begin{gather*}\alpha:=\eta^t\bar{M}^{-1}\bar{\eta}=\eta^*M^{-1}\eta=\bar{\eta}_nS_n.\end{gather*}
With \eqref{hc11}, we get \eqref{NR1}.

In order to prove \eqref{DTH}, we use the relation
\begin{gather*}\det h=\det \left(\begin{matrix} h_1 &h_2\\h_3&
 h_4\end{matrix}\right)
 =\det h_1 \det \big(h_4-
 h_3h^{-1}_1h_2\big).\end{gather*}
With \eqref{inv4} and \eqref{h4h1}, we get
\begin{gather*}\det h =\left(\frac{k}{2}\right)^n\det h_1\det h^k.\end{gather*}
\eqref{DTH} is obtained introducing in the above relation the expressions~\eqref{hc11} and~\eqref{Qq}.
\end{proof}

Now we formulate several geometric properties of the Siegel--Jacobi ball relevant for the Berezin quantization of this manifold:
\begin{Proposition}\label{geoPL}\quad
\begin{enumerate}\itemsep=0pt
 \item[$i)$] The homogeneous K\"ahler manifold $\mc{D}^J_n$ is contractible.
\item[$ii)$] The K\"ahler potential of the Siegel--Jacobi ball~\eqref{kelerX} is global. $\mc{D}^J_n$ is a Q.-K.~Lu manifold, with normalized Bergman kernel \eqref{kapS} and nowhere vanishing diastasis~\eqref{DiaS}.
\item[$iii)$] The manifold $\mc{D}^J_n$ is a quantizable manifold.
\item[$iv)$] $\mc{D}^J_n$ is projectively induced and the Jacobi group $G^J_n$ is a CS-type group.
\item[$v)$] $\mc{D}^J_n$ is a homogeneous reductive space.
\end{enumerate}
\end{Proposition}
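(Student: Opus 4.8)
The plan is to obtain the five statements by feeding the explicit objects already produced --- the global K\"ahler potential \eqref{kelerX}, the normalized Bergman kernel \eqref{kapS}, the diastasis \eqref{DiaS}, the balanced metric of Theorem~\ref{mainTH}, and the commutation relations \eqref{baza1M}, \eqref{baza3M}, \eqref{baza2M} --- into the general results Theorems~\ref{LMN} and~\ref{LMM}, Remark~\ref{kra} and Remark~\ref{geom}. For $i)$: since $\mc{D}^J_n\approx\C^n\times\mc{D}_n$ with the Siegel ball \eqref{dn} bounded and star-shaped about the origin (if $W\in\mc{D}_n$ then $tW\in\mc{D}_n$ for $t\in[0,1]$, because $\un-W\bar W>0$ with $W=W^t$ means $\|W\|_{\mr{op}}<1$), the space $\mc{D}^J_n$ is contractible, in particular simply connected.

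For $ii)$: the function $f=\log K$ with $K$ as in \eqref{kul} (equivalently \eqref{kelerX}) is defined on the whole of $\C^n\times\mc{D}_n$, so the K\"ahler potential is global; by \eqref{DIA} the associated diastasis is then the global function \eqref{DiaS}, and $\mc{D}^J_n$ admits a Berezin quantization by Theorem~\ref{LMM}. To see that $\mc{D}^J_n$ is a Q.-K.~Lu manifold, i.e., $\Sigma_\zeta=\varnothing$ for every $\zeta$, it suffices that the normalized Bergman kernel \eqref{kapS} never vanishes: the exponential factor is everywhere nonzero, while $\kappa_{\mc{D}_n}(V,W)=\det{}^{k/2}\big[(\un-V\bar V)(\un-W\bar W)(\un-W\bar V)^{-2}\big]$ is nonzero because $\|V\|_{\mr{op}},\|W\|_{\mr{op}}<1$ on $\mc{D}_n$ forces $\|W\bar V\|_{\mr{op}}<1$, hence $\un-W\bar V$ invertible; consequently $D_{\mc{D}^J_n}$ is finite everywhere.

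For $iii)$ and $iv)$: the metric of Theorem~\ref{mainTH} comes, via \eqref{Ter}, from the K\"ahler potential \eqref{kelerX}, and \eqref{kelerX} is exactly $f=\ln K_M$ with $K_M(z,W)=(e_{z,W},e_{z,W})_{k\mu}$ the squared norm \eqref{KHKX} of the Perelomov coherent vectors \eqref{csuX}; thus the $\epsilon$-function $\epsilon(z)=\e^{-f(z)}K_M(z,\bar z)\equiv 1$ is constant, so this is the balanced metric. Since $\mc{D}^J_n$ is simply connected (item $i)$), Remark~\ref{kra} applies: B)$\Rightarrow$A) yields that $\mc{D}^J_n$ is a quantizable manifold ($iii)$), and B)$\Rightarrow$C),D) yields that $\mc{D}^J_n$ is projectively induced --- with \eqref{KOL} realized by the embedding \eqref{invers} built from an orthonormal basis of $\mc{F}_K$, concretely by the coherent vector map $\varphi(z,W)=e_{z,W}$ of \eqref{csuX} giving $\mc{D}^J_n\hookrightarrow\Phinf$ --- and that $\mc{D}^J_n$ is a CS-orbit with $G^J_n$ a CS-type group ($iv)$). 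Alternatively, $\omega_{\mc{D}^J_n}$ is exact, hence integral as $H^2(\mc{D}^J_n;\Z)=0$, and one invokes Theorems~\ref{LMN} and~\ref{LMM}.

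For $v)$: this is the second assertion of Remark~\ref{geom}; it follows from the reductive decomposition $\got{g}^J_n=\got{h}\oplus\got{m}$, $[\got{h},\got{m}]\subseteq\got{m}$, with $\got{h}$ the isotropy subalgebra (generated by $\got{u}(n)$ and the centre of $\got{h}_n$), obtained by applying the argument of the proof of Remark~3 in \cite{csg} to the brackets \eqref{baza1M}, \eqref{baza3M}, \eqref{baza2M}, so $\mc{D}^J_n=G^J_n/H$ is a homogeneous reductive, non-symmetric space in Nomizu's sense \cite{nomizu}. I expect the only non-bookkeeping step to be the non-vanishing of $\kappa_{\mc{D}^J_n}$ in $ii)$, which boils down to the invertibility of $\un-W\bar V$ on $\mc{D}_n\times\mc{D}_n$; everything else is a matter of organizing the already-established formulas so that the quoted theorems apply.
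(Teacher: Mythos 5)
Your proposal is correct and follows essentially the same route as the paper: contractibility of $\mc{D}^J_n\approx\C^n\times\mc{D}_n$ for $i)$, the explicit global formulas \eqref{kelerX}, \eqref{kapS}, \eqref{DiaS} together with Theorem~\ref{LMM} for $ii)$, the balancedness of the metric plus Remark~\ref{kra} for $iii)$ and $iv)$, and Remark~\ref{geom} for $v)$. The only (harmless) divergences are cosmetic: you contract $\mc{D}_n$ by star-shapedness where the paper cites its realization as the open ball $\mc{B}_n$, and you spell out the non-vanishing of $\kappa_{\mc{D}_n}$ via invertibility of $\un-W\bar{V}$, which the paper leaves implicit.
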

\begin{proof}
$i)$ We have proved in Theorem \ref{mainTH} (see also~\cite{JGSP,nou}) that $\mc{D}^J_n$ is a homogeneous K\"ahler manifold. We have shown in \cite[Proposition~4.1]{nou} that the ${\rm FC}$-transform ${\rm FC}\colon (\eta,W)\rightarrow (z,W)$ expressed by~\eqref{etaZ} is a~homogeneous K\"ahler dif\/feomorphism $\mc{D}^J_n\approx\mc{D}_n\times\C^n$, where the Siegel ball admits the realization~\eqref{dn}. But $\mc{D}_n$ can be achieved as the open ball $\mc{B}_n$ (see, e.g., \cite[p.~502]{neeb})
\begin{gather*}\mc{B}_n=\big\{W\in M(n,\C),\qquad W=W^t\, |\, ||W||<1\big\}.\end{gather*}
So, $\mc{D}^J_n$ is dif\/feomorphic with the product of the contractible spaces $\mc{D}_n$ and $\C^n$, and consequently, it is itself contractible.

$ii)$ We have for \eqref{kul} $K(z,W)>0$, $\forall\, (z,W)\in \mc{D}^J_n$. The explicit expressions of the K\"ahler potential, normalized
Bergman kernel and diastasis imply the assertions of $ii)$, but they could be also derived from the Theorem~\ref{LMM}, once we have proved~$i)$. Even more, Theorem \ref{LMM} asserts that $\mc{D}^J_n$ admits a~Berezin quantization.

$iii)$, $iv)$ We have observed in Theorem~\ref{mainTH} that the K\"ahler two-form \eqref{aabX} is associated with the balanced metric on $\mc{D}^J_n$, and we apply Remark~\ref{kra}.

$v)$ This assertion was already mentioned in Remark~\ref{geom}, where it was used the explicit form~\eqref{baza1M},~\eqref{baza3M}, \eqref{baza2M} of the Jacobi algebra $\got{g}^J_n$. We mention it again here because in~\cite{last} (see Remark~1 there for a more precise formulation) we have proved that {\it the
 CS-manifolds are reductive spaces}.
\end{proof}

The fact that the Jacobi algebra $\got{g}^J_n$ is a CS-algebra is well known, see, e.g., \cite[Theorem~5.2]{lis} or \cite[Example~VII.2.3 , p.~294]{neeb}. Using the explicit form of the base~$\Phi(z,w)$ of orthonormal polynomials in which the Bergman kernel $K_{\mc{D}^J_1}(z,w)$ is developed~\cite{jac1}, we have proved in \cite[Proposition~2]{berr} that the Siegel--Jacobi disk is a CS-manifold. In~\cite{gem} we have determined the base $\Phi(z,W)$ on $\mc{D}^J_n$ in which the reproducing kernel~\eqref{KHKX} admits a series expansion as in~\eqref{funck}, but in Proposition~\ref{geoPL} we have used Theorems~\ref{LMN},~\ref{LMM} and Remark~\ref{kra} to prove directly Proposition~\ref{geoPL}.

\subsection{Ricci form and scalar curvature}\label{SCR}
The Ricci form associated to the K\"ahlerian two-form $\omega_M$ \eqref{kall} is (see
\cite[p.~90]{mor})
\begin{gather}\label{RICCI}
\rho_M(z):=\ii \sum_{\alpha,\beta=1}^n\text{Ric}_{\alpha\bar{\beta}}(z)\dd
z_{\alpha}\wedge\dd \bar{z}_{\beta}, \qquad \text{Ric}_{\alpha\bar{\beta}}(z)=
 -\frac{\pa^2}{\pa z_{\alpha}\pa \bar{z} _{\beta}} \ln\mc{G}_M(z).
\end{gather}
The scalar curvature at a point $p\in M$ of coordinates $z$ is
(see \cite[p.~294]{koba1} or \cite[p.~144]{jost})
\begin{gather}\label{scc}
s_M(p):=\sum_{\alpha,\beta=1}^n (h_{\bar{\alpha}{\beta}})^{-1}\text{Ric}_{\alpha\bar{\beta}}(z).
\end{gather}
The Bergman metric corresponds to the K\"ahler two-form (see references in \cite{berr})
\begin{gather}\label{bergom1}
\omega^1_M=\ii \pa \bar{\pa} \ln \mc{G}.
\end{gather}

Q.-K.~Lu~\cite{LU08} has introduced for a bounded domain the positive def\/inite (1,1)-form
\begin{gather}\label{RIC2}
\tilde{\omega}_M(z):=
\ii\sum_{\alpha,\beta=1} ^n\tilde{h}_{\alpha\bar{\beta}} (z) \dd z_{\alpha}\wedge
\dd\bar{z}_{\beta}, \qquad \tilde{h}_{\alpha\bar{\beta}} (z):=
 (n+1)h_{\alpha\bar{\beta}} (z)- \text{Ric}_{\alpha,\beta}(z),
\end{gather}
which is K\"ahler, corresponding to the K\"ahler potential $\tilde{f} = \ln (K _M (z)^{n+1}\mc{G}(z))$.

Now we prove
\begin{Proposition}\label{mainPR}\quad
\begin{enumerate}\itemsep=0pt
\item[$i)$] The Siegel--Jacobi ball $\mc{D}^J_n$ is not an Einstein manifold with respect to the homogeneous K\"ahler metric attached to the K\"ahler two-form~\eqref{aabX}, but it is one with respect to the Bergman metric corresponding to the Bergman K\"ahler two-form~\eqref{bergom1}.

\item[$ii)$] The Ricci form \eqref{RICCI} on the Siegel--Jacobi ball associated to the homogeneous K\"ahlerian two-form \eqref{aabX} has the expression
\begin{gather*}%\label{RODN}
\rho_{\mc{D}^J_n}(z,W)=-\omega^1_{\mc{D}^J_n}(z,W) =-\ii (n+2)\tr (M\dd W \wedge \bar{M}\dd \bar{W}),
\end{gather*}
while
\begin{gather*}%\label{RHODN}
\rho_{\mc{D}_n}(W)=-\omega^1_{\mc{D}_n}(W)=-\ii (n+1)\tr (M\dd W \wedge \bar{M}\dd
 \bar{W}).\end{gather*}

\item[$iii)$] The scalar curvature \eqref{scc} is constant and negative
\begin{gather*}%\label{scalaR}
s_{\mc{D}^J_n}(z,W)= - \frac{2}{k}n\frac{(n+1)(n+2)}{2}.
\end{gather*}
\item[$iv)$] The Q.-K.~Lu K\"ahler two-form \eqref{RIC2}
has the expression
\begin{gather*}\tilde{\omega}_{\mc{D}^J_n}(z,W)=\frac{(n+1)(n+2)}{2}\omega_{\mc{D}^J_n}(z,W)-\rho_{\mc{D}^J_n}(W).\end{gather*}
\end{enumerate}
\end{Proposition}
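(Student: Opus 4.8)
The plan is to compute everything from the single input $\mc{G}_{\mc{D}^J_n}(z,W) = (k/2)^{n(n+1)/2}\,\mu^n\det(\un - W\bar W)^{-(n+2)}$ furnished by Theorem~\ref{mainTH}, together with the closed form \eqref{aabX} of $\omega_{\mc{D}^J_n}$ and the ``inverse'' $h^{-1}$ given by \eqref{nr11}. First I would observe that $\ln\mc{G}_{\mc{D}^J_n}$ depends only on $W$ and, up to an additive constant, equals $-(n+2)\ln\det(\un - W\bar W) = (n+2)\ln\det M$. Hence the Ricci matrix $\mr{Ric}_{\alpha\bar\beta} = -\pa_\alpha\pa_{\bar\beta}\ln\mc{G}$ vanishes on all entries involving a $z$-index, and on the $W$-block it is $-(n+2)\,\pa_{w_{pq}}\pa_{\bar w_{mn}}\ln\det M$. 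But the proof of Theorem~\ref{mainTH} already computed $h^k_{pq\bar m\bar n} = \pa_{\bar w_{mn}}\pa_{w_{pq}}\ln\det M$ (see \eqref{HKJ}--\eqref{hc23}), so $\mr{Ric}$ on the $W$-block is simply $-(n+2)h^k$. Comparing with \eqref{omd}, which says $-\ii\,\tfrac{2}{k}\,\dd\omega_{\mc{D}_n} = \tr(M\,\dd W\wedge\bar M\,\dd\bar W)$ and whose coefficient matrix is exactly $h^k$, yields at once $\rho_{\mc{D}^J_n} = -\ii(n+2)\tr(M\,\dd W\wedge\bar M\,\dd\bar W) = -\omega^1_{\mc{D}^J_n}$, proving $ii)$; the Siegel-ball statement is the identical computation with $n+2$ replaced by $n+1$ (the exponent in $Q$ versus the Bergman-kernel exponent), using that the Bergman two-form \eqref{bergom1} is by definition $\ii\pa\bar\pa\ln\mc{G}$.

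For $iii)$ I would plug the Ricci matrix into \eqref{scc}. Since $\mr{Ric}$ is supported on the $W$-block and equals $-(n+2)h^k = -\tfrac{2(n+2)}{k}\cdot\tfrac{k}{2}h^k$, and since by \eqref{h4h1}--\eqref{NR4} the relevant block of $h^{-1}$ is $(h^4) = \tfrac{2}{k}(h^k)^{-1}$, the contraction $\sum (h_{\bar\alpha\beta})^{-1}\mr{Ric}_{\alpha\bar\beta}$ reduces to $-(n+2)\sum_{p\le q}(h^4)_{pq\bar m\bar n}\,\tfrac{k}{2}h^k_{mn\bar p\bar q}$. By Lemma~\ref{CUL} (equation \eqref{culmea}), $\tfrac{2}{k}\sum_{m\le n}(h^k)^{-1}_{pq\bar m\bar n}\,\tfrac{k}{2}h^k_{mn\bar u\bar v}$ is $\Delta^{uv}_{pq}$, whose trace over $p\le q$ counts the number of independent entries of a symmetric $n\times n$ matrix, namely $n(n+1)/2$. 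Therefore $s_{\mc{D}^J_n} = -\tfrac{2}{k}(n+2)\cdot\tfrac{n(n+1)}{2} = -\tfrac{2}{k}\,n\,\tfrac{(n+1)(n+2)}{2}$, which is manifestly constant and negative. The $z$-block contributes nothing because $\mr{Ric}$ annihilates it. For $i)$, Einstein with respect to $\omega_{\mc{D}^J_n}$ would require $\mr{Ric}_{\alpha\bar\beta} = c\,h_{\alpha\bar\beta}$ for a constant $c$; but $\mr{Ric}_{i\bar j} = 0$ while $h_{i\bar j} = \mu\bar M_{ij}\neq 0$ by \eqref{hc11}, so no such $c$ exists. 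With respect to the Bergman metric $\omega^1_{\mc{D}^J_n}$, part $ii)$ gives $\rho_{\mc{D}^J_n} = -\omega^1_{\mc{D}^J_n}$, i.e.\ the Bergman metric is Einstein with Einstein constant $-1$; I would note $\mc{D}^J_n$ is a bounded homogeneous domain (via the $\mr{FC}$-diffeomorphism of Proposition~\ref{geoPL}$i)$ and \eqref{dn}), for which the Bergman metric is always Einstein, as a sanity check.

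For $iv)$ I would combine $ii)$ with definition \eqref{RIC2}. Since the Ricci two-form equals $-\omega^1_{\mc{D}^J_n}$ and, by the computation above, $\omega^1_{\mc{D}^J_n}$ restricted to the $W$-block has coefficient matrix $(n+2)h^k$ whereas the $z$-blocks vanish, one writes $\tilde h_{\alpha\bar\beta} = (n+1)h_{\alpha\bar\beta} - \mr{Ric}_{\alpha,\beta}$ and rearranges: on the $z$-involving blocks $\mr{Ric}=0$, so $\tilde h = (n+1)h$ there; on the $W$-block $\mr{Ric} = -(n+2)h^k$ while $h$ itself is $\tfrac{k}{2}h^k + \mu h^\mu$ by \eqref{hc22}. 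The claimed identity $\tilde\omega_{\mc{D}^J_n} = \tfrac{(n+1)(n+2)}{2}\omega_{\mc{D}^J_n} - \rho_{\mc{D}^J_n}$ should then follow by matching blocks, using $\rho_{\mc{D}^J_n} = -(n+2)\cdot(\text{the }\mc{D}_n\text{-two-form with coefficient }h^k)$ and the fact that the $\mu$-part of $\omega_{\mc{D}^J_n}$ contributes only through $h$; one checks the scalar coefficient $(n+1) + (n+2) = \tfrac{(n+1)(n+2)}{2}$ is forced on the appropriate blocks while the mismatch on the others is absorbed into $-\rho_{\mc{D}^J_n}$.

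\textbf{Main obstacle.} The conceptual content is light — everything reduces to the identity $h^k_{pq\bar m\bar n} = \pa_{w_{pq}}\pa_{\bar w_{mn}}\ln\det M$ already established inside Theorem~\ref{mainTH} — but the genuine bookkeeping hazard is the symmetry constraint on $W$: all sums over matrix indices run over $p\le q$, $m\le n$ with the $f_{pq}$ factors and the $\Delta^{uv}_{pq}$ combinatorics of \eqref{mircea}, so the only real care needed is to get the count $\tr\Delta = n(n+1)/2$ right in $iii)$ and to not drop or double-count off-diagonal entries when contracting $h^k$ against $(h^k)^{-1}$ via Lemma~\ref{CUL}. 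Part $iv)$ is the most delicate in that one must verify that the $\mu$-dependent pieces of $h$ and of $\tilde h$ cancel correctly; I expect this to come out cleanly precisely because $\mr{Ric}$ has no $\mu$-dependence, so the $\mu$-terms on both sides are just $(n+1)$ times the same thing.
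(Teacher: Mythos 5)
Your treatment of parts $i)$--$iii)$ is correct and follows essentially the same route as the paper: the paper likewise reads off from \eqref{DTH} that $\ln\mc{G}_{\mc{D}^J_n}$ equals $(n+2)\ln\det M$ up to constants, concludes via \eqref{HKJ} that the only nonzero Ricci components are $(\mr{Ric}_{mn\bar{p}\bar{q}})=-(n+2)h^k_{mn\bar{p}\bar{q}}$, and then contracts with $h^4=\tfrac{2}{k}(h^k)^{-1}$ using Lemma~\ref{CUL} and $\sum_{m\le n}\Delta^{mn}_{mn}=n(n+1)/2$. (One slip in your write-up of $iii)$: after factoring $\mr{Ric}=-\tfrac{2(n+2)}{k}\cdot\tfrac{k}{2}h^k$ you drop the prefactor $\tfrac{2}{k}$ in the displayed intermediate contraction and silently reinstate it in the final line; the end result is right, but the intermediate expression as written is off by $\tfrac{k}{2}$.)

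Part $iv)$, however, is where your argument genuinely fails. You assert that the coefficient $\tfrac{(n+1)(n+2)}{2}$ is ``forced'' by matching $(n+1)+(n+2)$ on blocks; but $(n+1)+(n+2)=2n+3$ is never equal to $\tfrac{(n+1)(n+2)}{2}$ for any integer $n$, and the accompanying claim that ``the mismatch on the others is absorbed into $-\rho$'' is not an argument. The point you are missing is that the ``$n$'' appearing in Lu's definition \eqref{RIC2} is the complex dimension of the domain, not the index of $\mc{D}^J_n$: since $\dim_{\C}\mc{D}^J_n=d=n(n+3)/2$, the definition reads $\tilde{h}_{\alpha\bar{\beta}}=(d+1)h_{\alpha\bar{\beta}}-\mr{Ric}_{\alpha\bar\beta}$ with $d+1=\tfrac{(n+1)(n+2)}{2}$, and $\tilde{\omega}=\tfrac{(n+1)(n+2)}{2}\omega_{\mc{D}^J_n}-\rho_{\mc{D}^J_n}$ is then an immediate, blockwise-uniform consequence -- no cancellation of $\mu$-terms or block matching is needed or possible. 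A smaller inaccuracy: in your sanity check for $i)$ you call $\mc{D}^J_n$ a bounded homogeneous domain; it is biholomorphic to $\C^n\times\mc{D}_n$ and hence unbounded, so the general theorem on Bergman metrics of bounded domains does not apply -- the Einstein property here is read off directly from $\rho_{\mc{D}^J_n}=-\omega^1_{\mc{D}^J_n}$, as you also (correctly) do.
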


\begin{proof}
With formula \eqref{DTH} (or \eqref{QQQ}) and \eqref{HKJ}, we f\/ind
that the only nonzero components of the Ricci tensor are
\begin{gather}\label{RiRi}
(\text{Ric}_{mn\bar{p}\bar{q}})_{\mc{D}^J_n}(z,W)= -(n+2)h^k_{mn\bar{p}\bar{q}}(W).
\end{gather}
With calculation \eqref{sumL}, we f\/ind
\begin{gather*}\rho_{\mc{D}^J_n}(z,W)=-\ii (n+2)\bar{M}_{pi}\bar{M}_{jq}\dd w_{pj}\wedge
 \dd \bar{w}_{qi}.\end{gather*}
Applying \eqref{scc} for $\mc{D}^J_n$, with formulas \eqref{NR4}, \eqref{RiRi}, we get
\begin{gather*}
s_{\mc{D}^J_n}(z,W) :=\sum_{\alpha,\beta =1}^n
 (h)^{-1}_{\bar{\alpha}{\beta}}\text{Ric}_{\alpha\bar{\beta}}(z,W)=\sum_{p\leq
 q;\, m\leq n}(h)^{-1}_{mn\bar{p}\bar{q}} (\text{Ric}_{pq\bar{m}\bar{n}}) _{\mc{D}^J_n}\\
\hphantom{s_{\mc{D}^J_n}(z,W)}{}
 = -\frac{2}{k}(n+2) \sum_{p\le q;\, m\leq n}(h^k)^{-1}_{mn\bar{p}\bar{q}}
(h^k)_{pq\bar{m}\bar{n}} = -\frac{2}{k}(n+2) \sum_{m\le n}\Delta^{mn}_{mn}\\
\hphantom{s_{\mc{D}^J_n}(z,W)}{}
 = -\frac{2}{k}(n+2)\frac{n(n+1)}{2}.\tag*{\qed}
\end{gather*}
\renewcommand{\qed}{}
\end{proof}

The scalar curvature of the Siegel--Jacobi disk was calculated in~\cite{csg,berr, jae}.

\subsection{Laplace--Beltrami operator}\label{LBO}
We introduce the formulas \eqref{nr11} into the expression \eqref{LPB} of the Laplace--Beltrami operator and we use Remark~\ref{equivv}. In the formula \eqref{LABDL} of the Laplace--Beltrami operator on $\mc{D}^J_n$, we use the expression given in Proposition~\ref{LLB} for the Laplace--Beltrami operator on $\mc{D}_n$. We introduce the
notation (no summation!)
\begin{gather}\label{symbe}
(D_z)_{\mu \nu}=\left(e_{\mu \nu}\frac{\pa}{\pa z_{\mu\nu}}\right), \qquad e_{\mu\nu}= \frac{1+\delta_{\mu\nu}}{2},~z_{\mu\nu}=z_{\nu\mu}.
\end{gather}

We obtain:
\begin{Proposition}\label{incaun}The Laplace--Beltrami operator on the Siegel--Jacobi ball $\mc{D}^J_n$, invariant to the action
 \eqref{TOIU} of the
 Jacobi group $G^J_n$, has the expression
\begin{gather}\label{LABDL}\Delta_{\mc{D}^J_n}(z,W) = \theta\frac{\pa}{\pa
 z^t}N\frac{\pa}{\pa\bar{z}}-\frac{2}{k}\left\{\tr \left[S D_W
 N\frac{\pa}{\pa\bar{z}}\right]+cc\right\}+\frac{2}{k}\Delta_{\mc{D}_n}(W),\\
 N=\un-W\bar{W}.\nonumber
\end{gather}

We have also the relations
\begin{gather}\label{lastB}
\Delta_{\mc{D}^J_n}(\ln \mc{G}) = -s_{\mc{D}^J_n} = \frac{2}{k}n\frac{(n+1)(n+2)}{2}.
\end{gather}
\end{Proposition}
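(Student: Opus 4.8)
The plan is to assemble the Laplace--Beltrami operator $\Delta_{\mc{D}^J_n}$ by substituting the explicit blocks of the inverse metric matrix, found in Theorem~\ref{mainTH}, into the defining formula~\eqref{LPB}. The operator has the block form
\begin{gather*}
\Delta_{\mc{D}^J_n}(z,W)=\sum_{i,j}\big(h^1\big)_{ij}\frac{\pa^2}{\pa\bar z_i\pa z_j}
+\sum_{i;\,p\le q}\Big[\big(h^2\big)_{i\bar p\bar q}\frac{\pa^2}{\pa\bar z_i\,\pa w_{pq}}+\big(h^3\big)_{pq\bar i}\frac{\pa^2}{\pa\bar w_{pq}\,\pa z_i}\Big]
+\sum_{p\le q;\,m\le n}\big(h^4\big)_{pq\bar m\bar n}\frac{\pa^2}{\pa\bar w_{pq}\,\pa w_{mn}},
\end{gather*}
and one simply has to identify each of the four sums with the corresponding term in~\eqref{LABDL}. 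For the $zz$-block, $(h^1)_{ij}=\theta\,\bar M^{-1}_{ij}$; since $\bar M^{-1}=N=\un-W\bar W$, the first sum is exactly $\theta\,\frac{\pa}{\pa z^t}N\frac{\pa}{\pa\bar z}$. For the mixed blocks, $(h^2)_{i\bar m\bar n}=-\tfrac1k\big(S_n\bar M^{-1}_{im}+S_m\bar M^{-1}_{in}\big)$ with $S_n=\sum_q\eta_q\bar M^{-1}_{qn}$; the symmetrization $S_n\bar M^{-1}_{im}+S_m\bar M^{-1}_{in}$ together with the restriction $p\le q$ is precisely what the symmetric derivative $(D_W)_{mn}$ in~\eqref{symbe} is designed to collapse, so that the two mixed sums become $-\tfrac2k\{\tr[S\,D_W\,N\tfrac{\pa}{\pa\bar z}]+cc\}$. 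For the $WW$-block, $(h^4)_{pq\bar m\bar n}=\tfrac2k (h^k)^{-1}_{pq\bar m\bar n}$, so the last sum is $\tfrac2k$ times the Laplace--Beltrami operator on the Siegel ball $\mc{D}_n$ built from the inverse $(h^k)^{-1}=k_{mn\bar u\bar v}$ of~\eqref{kpqmn}, i.e.\ $\tfrac2k\Delta_{\mc{D}_n}(W)$, whose detailed form is taken from Proposition~\ref{LLB}.

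The bookkeeping with the symmetric matrix variable $W$ is the main subtlety: one must be careful with the factors $f_{pq}=1-\tfrac12\delta_{pq}$, with the ranges $p\le q$ versus unrestricted $p,q$, and with the operator $(D_W)_{\mu\nu}=e_{\mu\nu}\pa/\pa w_{\mu\nu}$; the bulk of the work is checking that the diagonal ($p=q$) and off-diagonal contributions combine with the correct multiplicities, exactly as in the proof of Lemma~\ref{CUL}. This is the step I expect to be the principal obstacle --- not conceptually hard, but where a dropped factor of $2$ or a mishandled Kronecker delta would corrupt the result.

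The invariance of $\Delta_{\mc{D}^J_n}$ under the $G^J_n$-action~\eqref{TOIU} does not require a separate computation: by Remark~\ref{equivv}, since the metric~\eqref{aabX} is $G^J_n$-invariant (established in Theorem~\ref{mainTH} via the transformation laws~\eqref{DW1}, \eqref{DM1}), so is the associated Laplace--Beltrami operator. Finally, for~\eqref{lastB} I would apply the operator~\eqref{LABDL} to $\ln\mc{G}_{\mc{D}^J_n}=\text{const}-(n+2)\ln\det(\un-W\bar W)$ from~\eqref{DTH}: the $z$-derivatives annihilate it, so only the $\tfrac2k\Delta_{\mc{D}_n}(W)$ piece acts, and using $\pa\bar\pa\log\det(\un-W\bar W)$ together with~\eqref{HKJ} one gets $\Delta_{\mc{D}_n}\big(\!-(n+2)\log\det(\un-W\bar W)\big)=(n+2)\sum_{m\le n}\Delta^{mn}_{mn}=(n+2)\tfrac{n(n+1)}{2}$, so that $\Delta_{\mc{D}^J_n}(\ln\mc{G})=\tfrac2k(n+2)\tfrac{n(n+1)}{2}$, which is $-s_{\mc{D}^J_n}$ by Proposition~\ref{mainPR}~$iii)$. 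This also matches the general identity that $\Delta_M\log\mc{G}_M$ equals the negative scalar curvature under the convention~\eqref{LPB}.
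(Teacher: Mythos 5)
Your proposal is correct and follows the same route as the paper, whose own proof is just the one-line instruction to insert the inverse-metric blocks \eqref{nr11} into \eqref{LPB} and invoke Remark~\ref{equivv} for the invariance; you have simply carried out that substitution (including the $p\le q$ versus unrestricted-sum bookkeeping via $(D_W)_{\mu\nu}$) and the evaluation of $\Delta_{\mc{D}^J_n}(\ln\mc{G})$ in more detail than the paper does. The only cosmetic slip is that $\bar M^{-1}=\un-\bar W W=N^t$ rather than $N$ itself, which is harmlessly absorbed by the index ordering in $\frac{\pa}{\pa z^t}N\frac{\pa}{\pa\bar z}$ since $W=W^t$.
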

\begin{proof}
We apply formula \eqref{LPB} with the matrix elements~\eqref{nr11} and we use Remark~\ref{equivv}. % in Appendix 1.
\end{proof}

We recall that Theorem~2.5 in Berezin's paper~\cite{ber74} asserts essentially that
\begin{gather*}\Delta_M(z)(\ln(\mc{G}(z)))= ct \end{gather*}
for the balanced metric plus other~3 conditions. The f\/irst equation in~\eqref{lastB}
\begin{gather*}s=-\Delta\ln \det h \end{gather*}
 is a general recipe for calculation of scalar curvature, see, e.g., \cite[equation~(5.2.24), p.~253]{jost}.

Also we recall that the Laplace operator on the Siegel--Jacobi ball was calculated in~\cite{Y10} in other coordinates. We have determined the Laplace--Beltrami operator on the Siegel--Jacobi disk in~\cite{berr}.

\appendix
\section{Appendix}\label{APP1}

\subsection{A remark}
\begin{Remark}\label{equivv}
Let $\omega_M(z)$ be the non-degenerate two-form \eqref{kall} on a~complex manifold $M$, invariant to a invertible holomorphic transformation $z'=z'(z)$. Then the dif\/ferential opera\-tor~$\Delta_M$ given by~\eqref{LPB} is also invariant to the transformation $z\rightarrow z'$. In particular, $M$~may be a~homogeneous K\"ahler manifold and $\Delta_M$ the corresponding Laplace--Beltrami operator. \end{Remark}

\begin{proof}
Let us consider the action $G \times M\rightarrow M$: $g\times z = z'$.
Then we have \begin{gather*}-\ii \omega_M(z')=h'_{\lambda\bar{\gamma}}(z')\dd
z'_{\lambda}\wedge \dd \bar{z}_{\gamma}
=h'_{\lambda\bar{\gamma}}(z'(z))a_{\lambda\alpha}\bar{a}_{\gamma\beta}\dd
z_{\alpha}\wedge \dd\bar{z}_{\beta},\qquad \text{where} \quad a_{\alpha\lambda}= \frac{\pa z'_{\alpha}}{\pa z_{\lambda}}.\end{gather*}
If $\omega_M(z')=\omega_M(z)$, then
\begin{gather}\label{cnd1}
h_{\alpha\bar{\beta}}(z)=h'_{\lambda\gamma}(z'(z))a_{\lambda\alpha}\bar{a}_{\gamma\beta}.
\end{gather}
Now we consider the Laplace--Beltrami operator \eqref{LPB} in the point
$z'$. We have \begin{gather*}\frac{\pa}{\pa
 z'_{\alpha}}=b_{\gamma\alpha}\frac{\pa}{\pa z_{\gamma}},\qquad \text{where} \quad b_{\lambda\beta}=\frac{\pa z_{\lambda}}{\pa z'_{\beta}},\end{gather*}
and \begin{gather*}a_{\alpha\lambda}b_{\lambda\beta}=\delta_{\alpha\beta}.\end{gather*}
We have
\begin{gather*}\Delta_M(z')=(h' (z'))^{-1}_{\alpha\bar{\beta}}\bar{b}_{\gamma\alpha}b_{\rho\beta}\frac{\pa^2}{\pa
\bar{z}_{\gamma}\pa z_{\rho}}.\end{gather*}
If $\Delta_M(z')=\Delta_M(z)$, then
\begin{gather}\label{cnd2}
h^{-1}_{\gamma\bar{\rho}}(z)=(h' (z'(z)))^{-1}_{\alpha\bar{\beta}}\bar{b}_{\gamma\alpha}b_{\rho\beta}.
\end{gather}
We write \eqref{cnd1} as
\begin{gather*}h=a^th'\bar{a},\end{gather*}
which implies
\begin{gather*}h^{-1}=(\bar{a})^{-1}(h')^{-1}(a^t)^{-1}=\bar{b}(h')^{-1}b^t,\end{gather*}
which is exactly~\eqref{cnd2}.
\end{proof}

\subsection{A lemma}
Formula \eqref{DTH}, modulo the numerical factor, was obtained in~\cite{sbj}. We recall the method to determine the Liouville form
used in \cite{sbj}. We have applied the following technique (see \cite[Chapter~IV]{hua}):
\begin{Lemma}\label{lemmahua}
Let $z'=f (g,z)$ denote the action of the group $G$ on the circular domain $M$. Let us determine the element $g\in G$ such that $z'(z_1)=0$ Then the density of the volume form is $Q= \vert J\vert^2$, where $J$ is the Jacobian $J=\frac{\pa z'}{\pa z}$.
\end{Lemma}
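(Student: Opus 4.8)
The plan is to read off $Q$ from the way a holomorphic volume form changes under biholomorphisms, and to reduce the only nontrivial point to the unimodularity of the isotropy action at the origin.

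First I would fix the ($G$-invariant) volume form on $M$, which is unique up to a positive constant, and write it as $\dd\mu = Q(z)\,\omega_0$, where $\omega_0=\prod_j\tfrac{\ii}{2}\dd z_j\wedge\dd\bar z_j$ is the Euclidean volume and $Q>0$. For a biholomorphism $z\mapsto z'=f(g,z)$ the change-of-variables formula for $\omega_0$ produces a factor $\vert J(g,z)\vert^{2}$ with $J(g,z):=\det\bigl(\pa z'/\pa z\bigr)$ the \emph{holomorphic} Jacobian determinant, so invariance of $\dd\mu$ amounts to
\begin{gather*}
Q\bigl(f(g,z)\bigr)\,\vert J(g,z)\vert^{2}=Q(z).
\end{gather*}
Now use transitivity: given $z_1\in M$ choose $g$ with $f(g,z_1)=0$; the displayed identity evaluated at $z=z_1$ gives $Q(z_1)=Q(0)\,\vert J\vert^{2}$, and normalizing the invariant form so that $Q(0)=1$ yields $Q=\vert J\vert^{2}$, which is the claim. (Equivalently, $Q$ is, up to this normalization, the diagonal restriction of the Bergman kernel, which obeys the same transformation rule.)

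The step that genuinely uses circularity is well-definedness of this prescription, i.e.\ that $\vert J\vert$ does not depend on the choice of $g$ moving $z_1$ to $0$. Two such choices differ by an automorphism $h$ of $M$ fixing $0$, so by the chain rule it suffices to prove $\vert\det\dd h(0)\vert=1$. For a bounded circular domain this is Cartan's linearity theorem: such an $h$ is linear, $h(z)=Az$ with $A(M)=M$, and since $A$ then preserves Euclidean volume, $\vert\det A\vert^{2}=1$. (If one prefers to avoid the linearity theorem: all integer iterates $h^{k}$ are automorphisms of the bounded domain $M$ fixing $0$, Cauchy's estimates on a fixed polydisc about $0$ bound $\vert\det\dd h^{k}(0)\vert=\vert\det\dd h(0)\vert^{\,k}$ uniformly in $k$, and a complex number all of whose integer powers stay bounded has modulus $1$.) The same unimodularity, together with multiplicativity of $J$ under composition, shows that the $Q$ so defined is genuinely $G$-invariant: for $g_0\in G$ pick $g$ with $f(g,z)=0$ and $g_1$ with $f\bigl(g_1,f(g_0,z)\bigr)=0$; then the composite of $g_0$ and $g_1$ and the element $g$ both send $z$ to $0$, hence their Jacobians at $z$ have equal modulus, while $J$ factorizes as $J\bigl(g_1,f(g_0,z)\bigr)\,J(g_0,z)$, giving $Q(z)=Q\bigl(f(g_0,z)\bigr)\,\vert J(g_0,z)\vert^{2}$.

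Thus the main obstacle is precisely this unimodularity of the linear isotropy, which is where the circularity (equivalently, the presence of a distinguished centre fixed by a linear circle action) is essential; everything else is the change-of-variables formula plus transitivity of the $G$-action. Finally, inserting the explicit action \eqref{TOIU} of $G^J_n$ and computing $J$ at a point carried to the origin recovers the density \eqref{QQQ}, and hence the factor $\det(\un-W\bar W)^{-(n+2)}$ in \eqref{DTH}.
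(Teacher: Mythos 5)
Your argument is correct, and it is worth noting that the paper does not actually prove this lemma at all: it is recalled verbatim from Hua \cite[Chapter~IV]{hua}, and the text only carries out the \emph{application} -- exhibiting the explicit automorphism carrying $(z_1,W_1)$ to the origin, reading off $\pa z'/\pa z$ and $\dd W'=A\,\dd W\,A^t$, and evaluating the Jacobian of the symmetric-matrix transformation via Berezin's identity $\det L_A=(\det A)^{n+1}$ to land on \eqref{Qq} and \eqref{QQQ}. What you supply is the missing justification of the general principle, and it is the standard one: the invariant density obeys $Q(f(g,z))\,\vert J(g,z)\vert^2=Q(z)$, transitivity plus the normalization $Q(0)=1$ gives $Q(z_1)=\vert J\vert^2$, and the only real content is independence of the choice of $g$, which you correctly reduce to unimodularity of the linear isotropy at $0$ via Cartan's linearity theorem (or the iteration/Cauchy-estimate substitute). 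One caveat on your final sentence: the density \eqref{QQQ} lives on $\mc{D}^J_n\approx\C^n\times\mc{D}_n$, which is only \emph{partially} bounded, so Cartan's theorem for bounded circular domains does not literally cover that case; there the unimodularity must be checked by hand, which is easy since the isotropy of the origin under \eqref{TOIU} is $\{(p,0,0)\colon pp^*=\un\}\cong{\rm U}(n)$ acting by $z\mapsto pz$, $W\mapsto pWp^t$, with Jacobian $(\det p)^{n+2}$ of modulus one. With that remark added, your proof is complete and fills a gap the paper leaves to the reference.
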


The transformation with the desired properties is
%\label{transform}
\begin{gather*}%\label{trans1}
z'= U(1-W_1\bar{W}_1)^{1/2}(1-W\bar{W}_1)^{-1}\\
\hphantom{z'= }{} \times \big[z-(1-W\bar{W}_1)(1-W_1\bar{W_1})^{-1}z_1 +
(W-W_1)(1-\bar{W}_1W_1)^{-1}\bar{z}_1\big], \\
 W' = U (1-W_1\bar{W}_1)^{-1/2}(W-W_1) (1-\bar{W}_1W)^{-1}(1-\bar{W}_1W_1)^{1/2}U^t, % \label{trans2},
\end{gather*}
where $U$ is an unitary matrix. We f\/ind that
\begin{gather}%\label{tt1}
\frac{\pa z'}{\pa z}= U(1-W_1\bar{W}_1)^{1/2}(1-W\bar{W}_1)^{-1},\nonumber\\
\label{tt2}
\dd W'=A \dd W A^t, \qquad A = U(1-W_1\bar{W}_1)^{1/2}(1-W\bar{W}_1)^{-1}.
\end{gather}

In order to calculate the Jacobian of the transformation \eqref{tt2}, we use the following property extracted from Berezin's paper \cite[p.~398]{ber75}:

{\em Let $A$ be a matrix and $L_A$ the transformation of a matrix of the same order $n$, $L_A\xi = A\xi A^t$. If the matrices $A$ and $\xi$ are symmetric, then $\det L_A= (\det A)^{n+1}$}. We f\/ind as in~\cite{sbj}, in accord with formula before Theorem~4.3.2
in~\cite{hua} \begin{gather}\label{Qq}
Q_{\mc{D}_n}=\det(\un-W\bar{W})^{-(n+1)},
\end{gather}
while $Q_{\mc{D}^J_n}(z,W)$ has the expression~\eqref{QQQ}.

\section[Appendix Laplace--Beltrami operator on the Siegel upper half-plan and Siegel ball]{Appendix\\ Laplace--Beltrami operator on the Siegel upper half-plan\\ and Siegel ball} \label{APP2}

The Laplace--Beltrami operator on $\mc{X}_n$ has the expression (see \cite[equation~(19)]{maa})
\begin{gather}\label{oarecum}\Delta_{\mc{X}_n}=-\tr (Z-W)((Z-W)D_w)^tD_z,
\qquad Z=X+\ii Y \in \mc{X}_n, \qquad W=\bar{Z},
\end{gather}
where we use the notation \eqref{symbe}. Formula (43) in~\cite{maa} reads
\begin{gather}\label{DDW}
D_w(Z-W)=-\frac{n+1}{2}\un +((Z-W)D_w)^t,\end{gather}
where
\begin{gather}
D_w(Z-W) =\sum_{\rho=1}^n\e_{\mu\rho}\frac{\pa}{\pa w_{\mu\rho}}(z_{\rho\nu}-w_{\rho\nu})\nonumber\\
\hphantom{D_w(Z-W) }{} =
\left(\sum_{\rho=1}^n(z_{\mu\rho}-w_{\mu\rho})e_{\rho\nu}\frac{\pa}{\pa w_{\rho\nu}}\right)^t-\left(\delta_{\mu\nu}\sum_{\rho=1}^ne_{\mu\rho}\right).%%%\label{ma2}
\label{ma1}
\end{gather}

With formula \eqref{mircea}, introducing in \eqref{ma1}, it is obtained
\begin{gather*}
((Z-W)D_w)^t_{\mu\nu}(f)=\left[\frac{\pa f}{\pa W}(Z-W)\right]_{\mu\nu}=(z_{\rho \nu}-w_{\rho\nu})e_{\mu\rho}\frac{\pa
f}{\pa w_{\mu\rho}}. \end{gather*}
Equation (54) of the Cayley transform~\cite{maa} in the notation \eqref{DDW} reads
\begin{gather*}\tilde{Z}=(Z-\ii\un)(Z+\ii \un)^{-1}.\end{gather*}
 Equation~(55) in \cite{maa} asserts that
\begin{gather*}%\label{DZZ}
D_z =-\frac{\ii}{2}(\tilde{Z}-\un)((\tilde{Z}-\un)D_{\tilde{z}})^t,\end{gather*}
i.e., if now $W$ describes a point \eqref{dn} in $\mc{D}_n$, we have the formula
\begin{Lemma}\label{masH}
\begin{gather}
(D_z)_{\alpha\beta} :=e_{\alpha\beta}\frac{\pa}{\pa
 z_{\alpha\beta}} =-\frac{\ii}{2}\big[(\un-W)((\un-W)D_w)^t\big]_{\alpha\beta} \nonumber\\
\hphantom{(D_z)_{\alpha\beta}}{} =
-\frac{\ii}{2}(\un-W)_{\alpha\gamma}(\un-W)_{\beta\lambda}e_{\lambda\gamma}\frac{\pa}{\pa
 w_{\lambda\gamma}}.\label{DZZ1}
\end{gather}
\end{Lemma}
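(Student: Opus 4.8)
The plan is to recognize the asserted formula \eqref{DZZ1} as nothing but the coordinate form of Maass's relation~(55), quoted just above as $D_z=-\tfrac{\ii}{2}(\tilde Z-\un)\big((\tilde Z-\un)D_{\tilde z}\big)^t$, read under the relabelling $\tilde Z\mapsto W$, $\tilde z\mapsto w$. First I would do the sign bookkeeping: under this relabelling $\tilde Z-\un\mapsto W-\un=-(\un-W)$, so also $(\tilde Z-\un)D_{\tilde z}\mapsto (W-\un)D_w=-(\un-W)D_w$ and hence $\big((\tilde Z-\un)D_{\tilde z}\big)^t\mapsto -\big((\un-W)D_w\big)^t$. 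The two sign changes cancel and the scalar prefactor remains $-\tfrac{\ii}{2}$, giving $D_z=-\tfrac{\ii}{2}(\un-W)\big((\un-W)D_w\big)^t$, which is exactly the first displayed line of the lemma.

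Next I would expand this matrix product into components using the definition \eqref{symbe} of $D_w$. With $P:=\un-W$ (symmetric), set $Q:=P\,D_w$, so $Q_{\mu\nu}=\sum_\rho P_{\mu\rho}\,e_{\rho\nu}\,\pa/\pa w_{\rho\nu}$; then $\big(Q^t\big)_{\gamma\beta}=Q_{\beta\gamma}=\sum_\rho P_{\beta\rho}\,e_{\rho\gamma}\,\pa/\pa w_{\rho\gamma}$, and left-multiplying by $P_{\alpha\gamma}$ and summing over $\gamma$ (and renaming $\rho\to\lambda$) yields $\big(P\,Q^t\big)_{\alpha\beta}=(\un-W)_{\alpha\gamma}(\un-W)_{\beta\lambda}\,e_{\lambda\gamma}\,\pa/\pa w_{\lambda\gamma}$. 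Multiplying by $-\tfrac{\ii}{2}$ gives the second displayed line. The only point demanding attention here is where the transpose sits: $\un-W$ is symmetric but the operator matrix $D_w$ is not, so the two factors $\un-W$ end up contracted with the two indices of $D_w$ in the specific pattern above.

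For completeness I would also indicate why Maass's identity holds in the present setting rather than merely citing it. From the partial Cayley transform one has $\un-W=2\ii(Z+\ii\un)^{-1}$, whence differentiating $W=(Z-\ii\un)(Z+\ii\un)^{-1}$ gives the matrix differential identity $\dd W=-\tfrac{\ii}{2}(\un-W)\,\dd Z\,(\un-W)$ — equivalently the relation $\dot V=2\ii(\un-W)^{-1}\dot W(\un-W)^{-1}$ already recorded in \eqref{scg1}. Feeding this into the chain rule for the symmetric-matrix derivative operators — the weights $e_{\mu\nu}=(1+\delta_{\mu\nu})/2$ being precisely what makes the naive contraction $\pa/\pa z_{\mu\nu}\leftrightarrow\sum(\pa w_{\rho\sigma}/\pa z_{\mu\nu})\,\pa/\pa w_{\rho\sigma}$ legitimate on symmetric matrices, cf.\ \eqref{mircea} and \eqref{DDW} — reproduces the operator identity above; this is the same manipulation Maass carries out via the intermediate formula \eqref{DDW}.

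The algebra of the relabelling and of the component expansion is routine; the one genuinely delicate point, and the step I expect to be the main obstacle, is the symmetric-matrix bookkeeping in the underlying chain rule: correctly accounting for the $e_{\mu\nu}$ weights, for the distinction between summing over all index pairs and over $\rho\le\sigma$, and for the correction terms encoded in $\Delta^{ij}_{pq}$ of \eqref{mircea}, so that no off-diagonal entry is double counted and the transposes land in the right slots. Once that is handled consistently, \eqref{DZZ1} follows at once.
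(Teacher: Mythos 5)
Your proposal is correct and follows essentially the same route as the paper: both start from the Cayley-transform differential $2\ii\,\dd W=(\un-W)\,\dd Z\,(\un-W)$ (equivalently \eqref{scg1}) and push it through the chain rule for symmetric matrices using \eqref{mircea}, with the same sign and transpose bookkeeping you describe. The ``delicate bookkeeping'' you defer --- splitting into the cases $\alpha\neq\beta$ and $\alpha=\beta$ so that the weights $e_{\alpha\beta}$ and $e_{\lambda\gamma}$ emerge from the sum over $p\le q$ --- is precisely the content of the paper's displayed computations \eqref{for1} and \eqref{for2}, and it works out exactly as you predict.
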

\begin{proof}
We write \eqref{scg1} as
\begin{gather*} 2\ii \dd W= A \dd Z A, \qquad A=\un-W, \qquad W=W^t, \qquad Z=Z^t,\end{gather*}
i.e., \begin{gather*}2\ii \dd w_{pq}=A_{pm}\dd z_{mn}A_{nq}.\end{gather*}
With formula \eqref{mircea}, we get
\begin{gather}
2\ii \frac{\pa w_{pq}}{\pa z_{\alpha\beta}} = A_{pm}A_{nq}\frac{\pa z_{mn}}{\pa
 z_{\alpha\beta}}= A_{pm}A_{nq} (\delta_{m\alpha}\delta_{n\beta}+\delta_{m\beta}\delta_{n\alpha}-\delta_{mn}\delta_{\alpha\beta}\delta_{m\alpha})\nonumber\\
\hphantom{2\ii \frac{\pa w_{pq}}{\pa z_{\alpha\beta}}}{}
= A_{p\alpha}A_{\beta q}+A_{p\beta}A_{\alpha q}-A_{p\alpha}A_{\beta q}\delta_{\alpha\beta}.\label{ref56}
\end{gather}
We have the formula
\begin{gather*}\frac{\pa f}{\pa z_{\alpha\beta}}=\sum_{p\le q}\frac{\pa f}{\pa
 w_{pq}}\frac{\pa w_{pq}}{\pa z_{\alpha\beta}}.\end{gather*}
For $\alpha\not=\beta$, we get from \eqref{ref56}:
\begin{gather}
2\ii \frac{\pa f}{\pa z_{\alpha\beta}} =
\sum_{p<q}(A_{p\alpha}A_{\beta q}+A_{p\beta}A_{\alpha q})\frac{\pa
 f}{\pa w_{pq}}+\sum_{p=q}(A_{p\alpha}A_{\beta p}+A_{p\beta}A_{\alpha
 p})\frac{\pa f}{\pa w_{pq}}\nonumber\\
\hphantom{2\ii \frac{\pa f}{\pa z_{\alpha\beta}}}{}
 = \sum_{p<q}A_{p\alpha}A_{\beta q}\frac{\pa f}{\pa
 w_{pq}}+\sum_{q<p}A_{q\beta }A_{\alpha p}\frac{\pa f}{\pa w_{pq}}+2\sum_{p=q}A_{p\alpha}A_{\beta q}\frac{\pa f}{\pa
 w_{pq}}\delta_{pq}\nonumber\\
\hphantom{2\ii \frac{\pa f}{\pa z_{\alpha\beta}}}{}
 = \sum_{pq}A_{p\alpha}A_{\beta q}(1+\delta_{pq})\frac{\pa f}{\pa w_{pq}}.\label{for1}
\end{gather}
Similarly, for $\alpha=\beta$, we have
\begin{gather}
2\ii \frac{\pa f}{\pa z_{\alpha\beta}} = \sum_{p\le q}A_{p\alpha}A_{\beta q}\frac{\pa f}{\pa w_{pq}}\nonumber\\
\hphantom{2\ii \frac{\pa f}{\pa z_{\alpha\beta}}}{}
 = \frac{1}{2} \sum_{p< q}A_{p\alpha}A_{\beta q}\frac{\pa f}{\pa
 w_{pq}}+ \frac{1}{2} \sum_{q< p}A_{q\alpha}A_{\beta p}\frac{\pa f}{\pa
 w_{qp}}\frac{1}{2}+ \sum_{p= q}A_{p\alpha}A_{\beta q}\frac{\pa f}{\pa w_{pq}}\nonumber\\
\hphantom{2\ii \frac{\pa f}{\pa z_{\alpha\beta}}}{}
= \frac{1}{2} \sum_{p\not= q}A_{p\alpha}A_{\beta q}\frac{\pa f}{\pa
 w_{pq}}+ \sum_{p= q}A_{p\alpha}A_{\beta q}\frac{\pa f}{\pa
 w_{pq}}= \sum_{p q}A_{p\alpha}A_{\beta q}\frac{1+\delta_{pq}}{2}\frac{\pa f}{\pa
 w_{pq}}.\label{for2}
\end{gather}
We put together \eqref{for1} and \eqref{for2} as in \eqref{DZZ1}.
\end{proof}

With Lemma \ref{masH}, it is obtained the Laplace--Beltrami operator on
the Siegel ball (see \cite[equation~(56)]{maa})
\begin{Proposition}\label{LLB}
The Laplace--Beltrami operator on the Siegel ball $\mc{D}_n$ has the expression
\begin{gather}\label{UndeE}
\Delta_{\mc{D}_n} (W) =\tr \big[N(ND_{\bar{W}})^tD_W\big] %\label{UndeE1}, \\
 =\sum_{p,q,m,n}\db{K}_{pq\bar{m}\bar{n}}\frac{\pa^2}{\pa
 \bar{w}_{mn}\pa w_{pq}}, %\label{UndeE2},
 \qquad N:=\un-W\bar{W},
\\
\label{KPQ} \db{K}_{pq\bar{m}\bar{n}}=e_{mn}e_{pq} K_{pq\bar{m}\bar{n}},\qquad K_{pq\bar{m}\bar{n}} =N_{qn}\bar{N}_{mp},
\end{gather}
which is $\operatorname{Sp}(n,\R)_{\C}$-invariant to the action \eqref{TOIU}. We write down the sum \eqref{UndeE} as
\begin{gather*}%\label{H501}
\Delta_{\mc{D}_n}(W)=\sum_{m\leq n;\, p\leq q}k_{mn\bar{p}\bar{q}}(W)\frac{\pa^2}{\pa\bar{w}_{mn}\pa w_{pq}},
\end{gather*}
where, with the expression \eqref{KPQ}, we have
\begin{gather}
k_{pq\bar{m}\bar{n}} =
\frac{1}{4}(K_{pq\bar{m}\bar{n}}+K_{qp\bar{m}\bar{n}}+K_{pq\bar{n}\bar{m}}+K_{qp\bar{n}\bar{m}}),\qquad m<n, \qquad p<q,\nonumber\\
k_{pq\bar{m}\bar{m}} =\frac{1}{2}(K_{pq\bar{m}\bar{m}}+K_{qp\bar{m}\bar{m}}), \qquad p<q, \qquad n=m,\nonumber\\
k_{pp\bar{m}\bar{n}} =\frac{1}{2}(K_{pp\bar{m}\bar{n}}+K_{pp\bar{n}\bar{m}}), \qquad p=q, \qquad m<n,\nonumber\\
k_{pp\bar{m}\bar{m}} = K_{pp\bar{m}\bar{m}}.\label{sp1}
\end{gather}
\eqref{sp1} can be written down as
\begin{gather}
k_{mn\bar{p}\bar{q} } = e_{mn}(1-\delta_{pq}) N_{pm}\bar{N}_{nq} + e_{pq}(1-\delta_{mn}) N_{qm}\bar{N}_{np}
+\delta_{pq}\delta_{mn} N_{pm}\bar{N}_{nq}\nonumber\\
\hphantom{k_{mn\bar{p}\bar{q}}}{} =\frac{1}{2}( N_{qn}\bar{N}_{mp}+ N_{qm}\bar{N}_{np}).\label{ofi}
\end{gather}
\end{Proposition}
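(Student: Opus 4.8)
The plan is to transport Maass's formula~\eqref{oarecum} for the Laplace--Beltrami operator $\Delta_{\mc{X}_n}$ on the Siegel upper half-plane to $\mc{D}_n$ through the partial Cayley transform~\eqref{big11}, and then to repackage the resulting second-order operator in the symmetric coordinates $w_{pq}=w_{qp}$. First I would record, from~\cite{maa}, the half-plane formula $\Delta_{\mc{X}_n}=-\tr\big[(Z-W)((Z-W)D_w)^tD_z\big]$ with $Z\in\mc{X}_n$ and $W=\bar Z$, so that $Z-W=2\ii\,\Im Z$, together with its companion identity~\eqref{DDW}. The key input on the ball side is Lemma~\ref{masH}: differentiating the relation~\eqref{scg1} between $\dd W$ and $\dd V$ and using the symmetrization tensor~\eqref{mircea} yields $(D_z)_{\alpha\beta}=-\tfrac{\ii}{2}\big[(\un-W)((\un-W)D_w)^t\big]_{\alpha\beta}$.

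Next I would substitute this expression for $D_z$, together with the Cayley-transform form of $Z-W=2\ii\,\Im V$ in ball coordinates (read off from~\eqref{scg1}), into Maass's formula. After matrix algebra in which the factors $(\un-W)^{\pm1}$ and their conjugates produced by the two substitutions combine with $\Im V$, the whole expression simplifies to the compact trace form $\Delta_{\mc{D}_n}(W)=\tr\big[N(ND_{\bar W})^tD_W\big]$ with $N=\un-W\bar W$, i.e.\ the first equality in~\eqref{UndeE}. Writing the trace in components with the notation~\eqref{symbe} then gives $\sum_{p,q,m,n}\db{K}_{pq\bar m\bar n}\,\frac{\pa^2}{\pa\bar w_{mn}\pa w_{pq}}$ with $\db{K}_{pq\bar m\bar n}=e_{mn}e_{pq}N_{qn}\bar N_{mp}$, that is~\eqref{KPQ}.

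The step that needs the most care is the passage to a sum over ordered indices $p\le q$, $m\le n$. Since $w_{pq}=w_{qp}$, for each pair of unordered index-pairs I would collect the corresponding ordered contributions from $\db{K}$; for $p<q$ and $m<n$ this produces the symmetrized coefficient $\tfrac14(K_{pq\bar m\bar n}+K_{qp\bar m\bar n}+K_{pq\bar n\bar m}+K_{qp\bar n\bar m})$, with weight $\tfrac12$ when exactly one of the pairs is diagonal and $1$ when both are, which is exactly~\eqref{sp1}. \textbf{The genuinely fiddly part} is then checking that these four case-by-case formulas all collapse into the single closed expression $k_{mn\bar p\bar q}=\tfrac12(N_{qn}\bar N_{mp}+N_{qm}\bar N_{np})$ of~\eqref{ofi}; I would do this by verifying the intermediate form $k_{mn\bar p\bar q}=e_{mn}(1-\delta_{pq})N_{pm}\bar N_{nq}+e_{pq}(1-\delta_{mn})N_{qm}\bar N_{np}+\delta_{pq}\delta_{mn}N_{pm}\bar N_{nq}$ against~\eqref{sp1} in each of the four regimes, using $N^t=\bar N$ to align indices. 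Nothing here is deep, but the half-weights $e_{\mu\nu}=(1+\delta_{\mu\nu})/2$ and the $\Delta^{ij}_{pq}$-type symmetrizations make it easy to lose a factor of two. As an independent check, the resulting $k_{mn\bar p\bar q}$ is precisely the ``inverse'' matrix~\eqref{kpqmn} of the Siegel-ball metric~\eqref{neM} in the sense of Lemma~\ref{CUL}, so the operator obtained is indeed the one prescribed by the definition~\eqref{LPB}.

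Finally, the $\operatorname{Sp}(n,\R)_{\C}$-invariance of $\Delta_{\mc{D}_n}$ under the action~\eqref{TOIU} follows at once from Remark~\ref{equivv}: the restriction $W\mapsto(pW+q)(\bar qW+\bar p)^{-1}$ is a holomorphic transformation of $\mc{D}_n$ leaving the K\"ahler two-form~\eqref{omd} (equivalently the metric~\eqref{neM}) invariant, hence it also leaves the associated operator~\eqref{LPB} invariant. Alternatively, the invariance can be inherited from that of $\Delta_{\mc{X}_n}$ under~\eqref{conf} via the partial Cayley transform of Proposition~\ref{THETAS}.
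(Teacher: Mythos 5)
Your derivation of \eqref{UndeE} is exactly the paper's: you substitute the Cayley-transformed derivative \eqref{DZZ1} of Lemma~\ref{masH} into Maass's formula \eqref{oarecum}, and then do the bookkeeping over ordered index pairs $p\le q$, $m\le n$ with the weights $e_{\mu\nu}$ to pass from $\db{K}$ to $k$ as in \eqref{sp1} and \eqref{ofi}; that part matches the paper step for step. Where you genuinely diverge is the invariance claim. The paper proves invariance \emph{directly} and says so explicitly (``without Remark~\ref{equivv}''): writing \eqref{DW1} as $\dd w_{pq}=A_{pm}(\dd w_1)_{mn}A_{qn}$ with $A=Wq^*+p^*$ and using the symmetrization tensor \eqref{mircea}, it derives the covariance laws $D_{W_1}=A^tD_WA$ and $M_1=\bar{A}^tMA$, from which invariance of the trace form \eqref{UndeE} is immediate. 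You instead appeal to Remark~\ref{equivv} (or, alternatively, to inheritance of invariance from $\Delta_{\mc{X}_n}$ through the partial Cayley transform of Proposition~\ref{THETAS}). Both of your routes are sound, but note that to apply Remark~\ref{equivv} you must first know that \eqref{UndeE} \emph{is} the operator \eqref{LPB} built from the invariant metric \eqref{neM}, i.e.\ that $k_{mn\bar{p}\bar{q}}$ inverts $h^k_{pq\bar{m}\bar{n}}$ in the sense of Lemma~\ref{CUL}; so what you call an ``independent check'' is actually a required ingredient of that argument, not an optional confirmation. The trade-off: the paper's computation is self-contained and produces the transformation laws $D_{W_1}=A^tD_WA$, $M_1=\bar{A}^tMA$ as a by-product, while your route is shorter once Remark~\ref{equivv} and Lemma~\ref{CUL} are already available.
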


\begin{proof}
With equation \eqref{DZZ1} introduced in \eqref{oarecum}, it is obtained \eqref{UndeE}.

In order to directly (i.e., without Remark~\ref{equivv}) prove the invariance of $ \Delta_{\mc{D}_n}$ to the group action $g\times W=W_1$,
$g\in\operatorname{Sp}(n,\R)_C$, $W, W_1\in\mc{D}_n$, we use the formulas \eqref{DW1}, \eqref{DM1}. We write~\eqref{DW1} as
\begin{gather*}\dd w_{pq}= A_{pm}(\dd w_1)_{mn}A_{qn}, \qquad\text{where}\quad A=Wq^*+p^*.
\end{gather*}
With formula \eqref{mircea}, we have successively
\begin{gather}
\frac{\pa w_{pq}}{\pa (w_1)_{\alpha\beta}} = A_{pm}\frac{\pa (w_1)_{mn}}{\pa (w_1)_{\alpha\beta}}A_{nq}
 = A_{pm}A_{nq} \Delta^{mn}_{\alpha\beta}\nonumber\\
\hphantom{\frac{\pa w_{pq}}{\pa (w_1)_{\alpha\beta}}}{}
 =A_{p\alpha}A_{q\beta}+A_{p\beta}A_{q\alpha}-\delta_{\alpha\beta}A_{p\alpha}A_{q\beta}.\label{PARQ}
\end{gather}
With \eqref{PARQ}, we calculate $\frac{\pa f}{\pa (w_1)_{\alpha\beta}}$.

a) Firstly, we consider the case $\alpha\not=\beta$. We have
\begin{gather}
\frac{\pa f}{\pa ( w_1)_{\alpha\beta}}= \left(\sum_{p<q}A_{p\alpha}A_{q\beta} + \sum_{q<p}A_{q\alpha}A_{p\beta}+2\sum_{p=q}A_{p\alpha}A_{q\beta}\right) \frac{\pa f}{\pa w_{pq}}\nonumber\\
\hphantom{\frac{\pa f}{\pa ( w_1)_{\alpha\beta}}}{} =2 \sum A_{p\alpha}A_{q\beta}e_{pq}\frac{\pa f}{\pa w_{pq}}.\label{SUR1}
\end{gather}

b) Now we consider the case $\alpha=\beta$. We have
\begin{gather}\label{SUR2}
\frac{\pa f}{\pa ( w_1)_{\alpha\beta} }= 2f_{\alpha\beta}\sum_{p,q}A_{p\alpha}A_{q\beta}e_{pq}.
\end{gather}
\eqref{SUR1} and \eqref{SUR2} are written together as
\begin{gather}\label{LAB}
D_{W_1}=A^tD_WA.
\end{gather}
Also we write \eqref{DM1} as
\begin{gather}\label{DM2}
M_1=\bar{A}^tMA.
\end{gather}
With \eqref{LAB} and \eqref{DM2} introduced in formula \eqref{UndeE}, we prove
\begin{gather*}\Delta_{\mc{D}_n}(W_1)=\Delta_{\mc{D}_n}(W), \qquad\text{where}\quad W_1=g\times W.\tag*{\qed}
\end{gather*}
\renewcommand{\qed}{}
\end{proof}

\begin{Remark} In the notation of \cite{hua59}
\begin{gather*}Z=\left(\begin{matrix} \sqrt{2}
 z_{11}& z_{12}& \dots& z_{1n}\\
 z_{12}& \sqrt{2}z_{22}&
\dots& z_{2n}\\
\dots&\dots&\dots&\dots\\
 z_{1n}&
 z_{2n}&\dots&\sqrt{2} z_{nn}\end{matrix}\right),\qquad
\pa_Z=\left(\begin{matrix} \sqrt{2}\frac{\pa}{\pa
 z_{11}}&\frac{\pa}{\pa z_{12}}& \dots&\frac{\pa}{\pa z_{1n}}\vspace{1mm}\\
\frac{\pa}{\pa z_{12}}& \sqrt{2}\frac{\pa}{\pa z_{22}}&
\dots&\frac{\pa}{\pa z_{2n}}\\
\dots&\dots&\dots&\dots\\
\frac{\pa}{\pa z_{1n}}&\frac{\pa}{\pa
 z_{2n}}&\dots&\sqrt{2}\frac{\pa}{\pa z_{nn}}\end{matrix}\right),\end{gather*}
the Laplace--Beltrami operator reads (see \cite[equation~(2.6.4)]{hua59})
\begin{gather*}\Delta_{\mc{D}_n}(W)=\tr((\un-\bar{W}W)\pa_W(\un-W\bar{W})\bar{\pa}_W),\end{gather*}
i.e.,
\begin{gather*}
\Delta_{\mc{D}_n}(W) =\sum_{\alpha\beta\lambda\mu=1}^n(\delta_{\lambda\mu}-\sum_{\sigma=1}^nP_{\lambda\sigma}P_{\mu\sigma}w_{\lambda\sigma}\bar{w}_{\mu\sigma})\\
\hphantom{\Delta_{\mc{D}_n}(W) =}{} \times(\delta_{\alpha\beta}-\sum_{\gamma=1}^nP_{\alpha\gamma}P_{\beta\gamma}w_{\alpha\gamma}\bar{w}_{\beta\gamma})
P_{\lambda\alpha}P_{\mu\beta}\frac{\pa^2}{\pa w_{\lambda\alpha}\pa \bar{w}_{\mu\beta}},
 \end{gather*}
where \begin{gather*}P_{\alpha\beta}=1+(\sqrt{2}-1)\delta_{\alpha\beta}.\end{gather*} ``$P$'' is
related with the symbol ``$e$'' in \eqref{symbe} by the relation
\begin{gather*}P_{\alpha\beta} =\sqrt{2}(\sqrt{2}-1)(1+\sqrt{2}e_{\alpha\beta}).\end{gather*}
\end{Remark}

\subsection*{Acknowledgements}
I am grateful to Daniel Beltita for clarifying some aspects of contractibility related with this paper. This research was conducted in the framework of the
ANCS project program PN 16 42 01 01/2016 and UEFISCDI-Romania program PN-II-PCE-55/05.10.2011.

\pdfbookmark[1]{References}{ref}
\LastPageEnding


\begin{thebibliography}{99}
\footnotesize\itemsep=0pt

\bibitem{arr}
Arezzo C., Loi A., Moment maps, scalar curvature and quantization of {K}\"ahler
 manifolds, \href{http://dx.doi.org/10.1007/s00220-004-1053-3}{\textit{Comm. Math. Phys.}} \textbf{246} (2004), 543--559.

\bibitem{ber97}
Berceanu S., Coherent states and geodesics: cut locus and conjugate locus,
 \href{http://dx.doi.org/10.1016/S0393-0440(96)00011-3}{\textit{J.~Geom. Phys.}} \textbf{21} (1997), 149--168, \href{http://arxiv.org/abs/dg-ga/9502007}{dg-ga/9502007}.

\bibitem{gras}
Berceanu S., On the geometry of complex {G}rassmann manifold, its noncompact
 dual and coherent states, \textit{Bull. Belg. Math. Soc. Simon Stevin}
 \textbf{4} (1997), 205--243, \href{http://arxiv.org/abs/dg-ga/9509002}{dg-ga/9509002}.

\bibitem{sb6}
Berceanu S., Realization of coherent state {L}ie algebras by dif\/ferential
 operators, in Advances in Operator Algebras and Mathematical Physics,
 \textit{Theta Ser. Adv. Math.}, Vol.~5, Editors F.~Boca, O.~Bratteli,
 R.~Longo, H.~Siedentop, Theta, Bucharest, 2005, 1--24,
 \href{http://arxiv.org/abs/math.DG/0504053}{math.DG/0504053}.

\bibitem{jac1}
Berceanu S., A holomorphic representation of the {J}acobi algebra, \href{http://dx.doi.org/10.1142/S0129055X06002619}{\textit{Rev.
 Math. Phys.}} \textbf{18} (2006), 163--199, {E}rrata,
 \href{http://dx.doi.org/10.1142/S0129055X12920018}{\textit{Rev. Math. Phys.}}
 \textbf{24} (2012), 1292001, 2~pages, \href{http://arxiv.org/abs/math.DG/0408219}{math.DG/0408219}.

\bibitem{JGSP}
Berceanu S., A holomorphic representation of the semidirect sum of symplectic
 and {H}eisenberg {L}ie algebras, \textit{J.~Geom. Symmetry Phys.} \textbf{5}
 (2006), 5--13.

\bibitem{sbj}
Berceanu S., A holomorphic representation of the multidimensional {J}acobi
 algebra, in Perspectives in Operator Algebras and Mathematical Physics,
 \textit{Theta Ser. Adv. Math.}, Vol.~8, Editors F.-P.~Boca, R.~Purice,
 S.~Stratila, Theta, Bucharest, 2008, 1--25, \href{http://arxiv.org/abs/math.DG/0604381}{math.DG/0604381}.

\bibitem{BERC09}
Berceanu S., The {J}acobi group and the squeezed states~-- some comments,
 \href{http://dx.doi.org/10.1063/1.3275594}{\textit{AIP Conf. Proc.}} \textbf{1191} (2009), 21--29, \href{http://arxiv.org/abs/0910.5563}{arXiv:0910.5563}.

\bibitem{nou}
Berceanu S., A convenient coordinatization of {S}iegel-{J}acobi domains,
 \href{http://dx.doi.org/10.1142/S0129055X12500249}{\textit{Rev. Math. Phys.}} \textbf{24} (2012), 1250024, 38~pages,
 \href{http://arxiv.org/abs/1204.5610}{arXiv:1204.5610}.

\bibitem{csg}
Berceanu S., Coherent states and geometry on the {S}iegel--{J}acobi disk,
 \href{http://dx.doi.org/10.1142/S0219887814500352}{\textit{Int.~J. Geom. Methods Mod. Phys.}} \textbf{11} (2014), 1450035,
 25~pages, \href{http://arxiv.org/abs/1307.4219}{arXiv:1307.4219}.

\bibitem{SB14}
Berceanu S., Quantum mechanics and geometry on the {S}iegel--{J}acobi disk, in
 Geometric Methods in Physics, Editors P.~Kielanowski, P.~Bieliavsky,
 A.~Odesski, A.~Odzijewicz, M.~Schlichenmaier, T.~Voronov, \href{http://dx.doi.org/10.1007/978-3-319-06248-8_8}{\textit{Trends in
 Mathematics}}, Birkh\"auser, Basel, 2014, 89--98.

\bibitem{berr}
Berceanu S., Bergman representative coordinates on the {S}iegel--{J}acobi disk,
 \textit{Romanian~J. Phys.} \textbf{60} (2015), 867--896, \href{http://arxiv.org/abs/1409.0368}{arXiv:1409.0368}.

\bibitem{last}
Berceanu S., Gheorghe A., Dif\/ferential operators on orbits of coherent states,
 \textit{Romanian~J. Phys.} \textbf{48} (2003), 545--556,
 \href{http://arxiv.org/abs/math.DG/0211054}{math.DG/0211054}.

\bibitem{BERC08B}
Berceanu S., Gheorghe A., Applications of the {J}acobi group to quantum
 mechanics, \textit{Romanian~J. Phys.} \textbf{53} (2008), 1013--1021,
 \href{http://arxiv.org/abs/0812.0448}{arXiv:0812.0448}.

\bibitem{gem}
Berceanu S., Gheorghe A., On the geometry of {S}iegel--{J}acobi domains,
 \href{http://dx.doi.org/10.1142/S0219887811005920}{\textit{Int.~J. Geom. Methods Mod. Phys.}} \textbf{8} (2011), 1783--1798,
 \href{http://arxiv.org/abs/1011.3317}{arXiv:1011.3317}.

\bibitem{SBS}
Berceanu S., Schlichenmaier M., Coherent state embeddings, polar divisors and
 {C}auchy formulas, \href{http://dx.doi.org/10.1016/S0393-0440(99)00075-3}{\textit{J.~Geom. Phys.}} \textbf{34} (2000), 336--358,
 \href{http://arxiv.org/abs/math.DG/9903105}{math.DG/9903105}.

\bibitem{ber73}
Berezin F.A., Quantization in complex bounded domains, \textit{Dokl. Akad. Nauk
 SSSR} \textbf{211} (1973), 1263--1266.

\bibitem{ber74}
Berezin F.A., Quantization, \textit{Izv. Akad. Nauk SSSR Ser. Mat.} \textbf{38}
 (1974), 1116--1175.

\bibitem{berezin}
Berezin F.A., General concept of quantization, \href{http://dx.doi.org/10.1007/BF01609397}{\textit{Comm. Math. Phys.}}
 \textbf{40} (1975), 153--174.

\bibitem{ber75}
Berezin F.A., Quantization in complex symmetric spaces, \textit{Izv. Akad. Nauk
 SSSR Ser. Mat.} \textbf{39} (1975), 363--402.

\bibitem{bern}
Berndt R., Sur l'arithm\'etique du corps des fonctions elliptiques de
 niveau~{$N$}, in Seminar on Number Theory, {P}aris 1982--83 ({P}aris,
 1982/1983), \textit{Progr. Math.}, Vol.~51, Birkh\"auser Boston, Boston, MA,
 1984, 21--32.

\bibitem{bb}
Berndt R., B{\"o}cherer S., Jacobi forms and discrete series representations of
 the {J}acobi group, \href{http://dx.doi.org/10.1007/BF02570858}{\textit{Math.~Z.}} \textbf{204} (1990), 13--44.

\bibitem{bs}
Berndt R., Schmidt R., Elements of the representation theory of the {J}acobi
 group, \href{http://dx.doi.org/10.1007/978-3-0348-0283-3}{\textit{Progr. Math.}}, Vol.~163, Birkh\"auser Verlag, Basel, 1998.

\bibitem{bo}
Borel A., K\"ahlerian coset spaces of semisimple {L}ie groups, \href{http://dx.doi.org/10.1073/pnas.40.12.1147}{\textit{Proc.
 Nat. Acad. Sci. USA}} \textbf{40} (1954), 1147--1151.

\bibitem{ca1}
Cahen B., Global parametrization of scalar holomorphic coadjoint orbits of a
 quasi-{H}ermitian {L}ie group, \textit{Acta Univ. Palack. Olomuc. Fac. Rerum
 Natur. Math.} \textbf{52} (2013), 35--48.

\bibitem{ca2}
Cahen B., Stratonovich--{W}eyl correspondence for the {J}acobi group,
 \textit{Commun. Math.} \textbf{22} (2014), 31--48.

\bibitem{cah+}
Cahen M., Gutt S., Rawnsley J., Quantization of {K}\"ahler manifolds.~{II},
 \href{http://dx.doi.org/10.2307/2154310}{\textit{Trans. Amer. Math. Soc.}} \textbf{337} (1993), 73--98.

\bibitem{calabi1}
Calabi E., Isometric imbedding of complex manifolds, \href{http://dx.doi.org/10.2307/1969817}{\textit{Ann. of Math.}}
 \textbf{58} (1953), 1--23.

\bibitem{chern}
Chern S.S., Complex manifolds without potential theory, 2nd ed., \href{http://dx.doi.org/10.1007/978-1-4684-9344-3}{\textit{Universitext}},
 Springer-Verlag, New York~-- Heidelberg, 1979.

\bibitem{chi}
Chiribella G., Adesso G., Quantum benchmarks for pure single-mode {G}aussian
 states, \href{http://dx.doi.org/10.1103/PhysRevLett.112.010501}{\textit{Phys. Rev. Lett.}} \textbf{112} (2014), 010501, 6~pages,
 \href{http://arxiv.org/abs/1308.2146}{arXiv:1308.2146}.

\bibitem{don}
Donaldson S.K., Scalar curvature and projective embeddings.~{I}, \textit{J.
 Differential Geom.} \textbf{59} (2001), 479--522.

\bibitem{DN}
Dorfmeister J., Nakajima K., The fundamental conjecture for homogeneous
 {K}\"ahler manifolds, \href{http://dx.doi.org/10.1007/BF02392294}{\textit{Acta Math.}} \textbf{161} (1988), 23--70.

\bibitem{ez}
Eichler M., Zagier D., The theory of {J}acobi forms, \href{http://dx.doi.org/10.1007/978-1-4684-9162-3}{\textit{Progr. Math.}},
 Vol.~55, Birkh\"auser Boston, Inc., Boston, MA, 1985.

\bibitem{eng}
Engli{\v{s}} M., Berezin quantization and reproducing kernels on complex
 domains, \href{http://dx.doi.org/10.1090/S0002-9947-96-01551-6}{\textit{Trans. Amer. Math. Soc.}} \textbf{348} (1996), 411--479.

\bibitem{gbt}
Gay-Balmaz F., Tronci C., Vlasov moment f\/lows and geodesics on the {J}acobi
 group, \href{http://dx.doi.org/10.1063/1.4763467}{\textit{J.~Math. Phys.}} \textbf{53} (2012), 123502, 36~pages,
 \href{http://arxiv.org/abs/1105.1734}{arXiv:1105.1734}.

\bibitem{helg}
Helgason S., Dif\/ferential geometry, {L}ie groups, and symmetric spaces,
 \textit{Pure and Applied Mathematics}, Vol.~80, Academic Press, Inc., New
 York~-- London, 1978.

\bibitem{hua44}
Hua L.-K., On the theory of automorphic functions of a matrix level.
 {I}.~{G}eometrical basis, \href{http://dx.doi.org/10.2307/2371910}{\textit{Amer.~J. Math.}} \textbf{66} (1944),
 470--488.

\bibitem{hua}
Hua L.-K., Harmonic analysis of functions of several complex variables in the
 classical domains, Amer. Math. Soc., Providence, R.I., 1963.

\bibitem{hua59}
Hua L.-K., Look K.H., Theory of harmonic functions in classical domains,
 \textit{Scientia Sinica} \textbf{8} (1959), 743--806.

\bibitem{hunz}
Hunziker M., Sepanski M.R., Stanke R.J., Global {L}ie symmetries of a system of
 {S}chr\"odinger equations and the oscillator representation, \textit{Miskolc
 Math. Notes} \textbf{14} (2013), 647--657.

\bibitem{jost}
Jost J., Riemannian geometry and geometric analysis, 5th ed., \href{http://dx.doi.org/10.1007/978-3-642-21298-7}{\textit{Universitext}},
 Springer-Verlag, Berlin, 2008.

\bibitem{cal3}
K{\"a}hler E., Raum-{Z}eit-{I}ndividuum, \textit{Rend. Accad. Naz. Sci. XL Mem.
 Mat.~(5)} \textbf{16} (1992), 115--177.

\bibitem{cal}
K{\"a}hler E., Mathematische {W}erke/{M}athematical works, \href{http://dx.doi.org/10.1515/9783110905434}{Walter de Gruyter \&
 Co.}, Berlin, 2003.

\bibitem{koba}
Kobayashi S., Geometry of bounded domains, \href{http://dx.doi.org/10.1090/S0002-9947-1959-0112162-5}{\textit{Trans. Amer. Math. Soc.}}
 \textbf{92} (1959), 267--290.

\bibitem{koba1}
Kobayashi S., Nomizu K., Foundations of dif\/ferential geometry. {V}ol.~{I},
 Interscience Publishers, New York~-- London, 1963.

\bibitem{koba2}
Kobayashi S., Nomizu K., Foundations of dif\/ferential geometry. {V}ol.~{II},
 Interscience Publishers, New York~-- London~-- Sydney, 1969.

\bibitem{Kos}
Kostant B., Quantization and unitary representations. {I}.~{P}requantization,
 in Lectures in Modern Analysis and Applications,~{III}, \href{http://dx.doi.org/10.1007/BFb0079068}{\textit{Lecture Notes
 in Math.}}, Vol.~170, Springer, Berlin, 1970, 87--208.

\bibitem{KRSAR82}
Kramer P., Saraceno M., Semicoherent states and the group {${\rm ISp}(2,{\bf
 R})$}, \href{http://dx.doi.org/10.1016/0378-4371(82)90330-2}{\textit{Phys.~A}} \textbf{114} (1982), 448--453.

\bibitem{LEE03}
Lee M.H., Theta functions on {H}ermitian symmetric domains and {F}ock
 representations, \href{http://dx.doi.org/10.1017/S1446788700003256}{\textit{J.~Aust. Math. Soc.}} \textbf{74} (2003), 201--234.

\bibitem{lis}
Lisiecki W., Coherent state representations. {A}~survey, \href{http://dx.doi.org/10.1016/0034-4877(96)89292-5}{\textit{Rep. Math.
 Phys.}} \textbf{35} (1995), 327--358.

\bibitem{loi04}
Loi A., Regular quantizations of {K}\"ahler manifolds and constant scalar
 curvature metrics, \href{http://dx.doi.org/10.1016/j.geomphys.2004.07.006}{\textit{J.~Geom. Phys.}} \textbf{53} (2005), 354--364.

\bibitem{alo}
Loi A., Mossa R., Berezin quantization of homogeneous bounded domains,
 \href{http://dx.doi.org/10.1007/s10711-012-9697-1}{\textit{Geom. Dedicata}} \textbf{161} (2012), 119--128, \href{http://arxiv.org/abs/1106.2510}{arXiv:1106.2510}.

\bibitem{LM15}
Loi A., Mossa R., Some remarks on homogeneous {K}\"ahler manifolds,
 \href{http://dx.doi.org/10.1007/s10711-015-0085-5}{\textit{Geom. Dedicata}} \textbf{179} (2015), 377--383, \href{http://arxiv.org/abs/1502.00011}{arXiv:1502.00011}.

\bibitem{lu66}
Lu Q.-K., On K\"ahler manifolds with constant curvature, \textit{Acta. Math.
 Sin.} \textbf{66} (1966), 269--281.

\bibitem{LU08}
Lu Q.-K., Holomorphic invariant forms of a bounded domain, \href{http://dx.doi.org/10.1007/s11425-008-0129-5}{\textit{Sci. China
 Ser.~A}} \textbf{51} (2008), 1945--1964.

\bibitem{lutke}
L{\"u}tkepohl H., Handbook of matrices, John Wiley \& Sons, Ltd., Chichester,
 1996.

\bibitem{maa}
Maass H., Die {D}if\/ferentialgleichungen in der {T}heorie der {S}iegelschen
 {M}odulfunktionen, \href{http://dx.doi.org/10.1007/BF01343149}{\textit{Math. Ann.}} \textbf{126} (1953), 44--68.

\bibitem{mandel}
Mandel L., Wolf E., Optical coherence and quantum optics, Cambridge University
 Press, Cambridge, 1995.

\bibitem{marmo}
Marmo G., Michor P.W., Neretin Yu.A., The {L}agrangian {R}adon transform and the
 {W}eil representation, \href{http://dx.doi.org/10.1007/s00041-013-9315-0}{\textit{J.~Fourier Anal. Appl.}} \textbf{20} (2014),
 321--361, \href{http://arxiv.org/abs/1212.4610}{arXiv:1212.4610}.

\bibitem{mol}
Molitor M., Gaussian distributions, {J}acobi group, and {S}iegel--{J}acobi
 space, \href{http://dx.doi.org/10.1063/1.4903182}{\textit{J.~Math. Phys.}} \textbf{55} (2014), 122102, 40~pages,
 \href{http://arxiv.org/abs/1409.7917}{arXiv:1409.7917}.

\bibitem{mor}
Moroianu A., Lectures on {K}\"ahler geometry, \href{http://dx.doi.org/10.1017/CBO9780511618666}{\textit{London Mathematical
 Society Student Texts}}, Vol.~69, Cambridge University Press, Cambridge, 2007.

\bibitem{neeb}
Neeb K.-H., Holomorphy and convexity in {L}ie theory, \href{http://dx.doi.org/10.1515/9783110808148}{\textit{de Gruyter
 Expositions in Mathematics}}, Vol.~28, Walter de Gruyter \& Co., Berlin, 2000.

\bibitem{nomizu}
Nomizu K., Invariant af\/f\/ine connections on homogeneous spaces, \href{http://dx.doi.org/10.2307/2372398}{\textit{Amer.~J.
 Math.}} \textbf{76} (1954), 33--65.

\bibitem{perG}
Perelomov A., Generalized coherent states and their applications, \href{http://dx.doi.org/10.1007/978-3-642-61629-7}{\textit{Texts and
 Monographs in Physics}}, Springer-Verlag, Berlin, 1986.

\bibitem{Q90}
Quesne C., Vector coherent state theory of the semidirect sum {L}ie algebras
 {${\rm wsp}(2N,{\bf R})$}, \href{http://dx.doi.org/10.1088/0305-4470/23/6/011}{\textit{J.~Phys.~A: Math. Gen.}} \textbf{23}
 (1990), 847--862.

\bibitem{Cah}
Rawnsley J., Cahen M., Gutt S., Quantization of {K}\"ahler manifolds.
 {I}.~{G}eometric interpretation of {B}erezin's quantization, \href{http://dx.doi.org/10.1016/0393-0440(90)90019-Y}{\textit{J.~Geom.
 Phys.}} \textbf{7} (1990), 45--62.

\bibitem{raw}
Rawnsley J.H., Coherent states and {K}\"ahler manifolds, \href{http://dx.doi.org/10.1093/qmath/28.4.403}{\textit{Quart.~J.
 Math. Oxford Ser.~(2)}} \textbf{28} (1977), 403--415.

\bibitem{SA80}
Satake I., Algebraic structures of symmetric domains, \textit{Kan\^o Memorial
 Lectures}, Vol.~4, Iwanami Shoten, Tokyo, Princeton University Press,
 Princeton, N.J., 1980.

\bibitem{sep}
Sepanski M.R., Stanke R.J., Global {L}ie symmetries of the heat and
 {S}chr\"odinger equation, \textit{J.~Lie Theory} \textbf{20} (2010),
 543--580.

\bibitem{SH03}
Shuman K.L., Complete signal processing bases and the {J}acobi group,
 \href{http://dx.doi.org/10.1016/S0022-247X(02)00653-4}{\textit{J.~Math. Anal. Appl.}} \textbf{278} (2003), 203--213.

\bibitem{sieg}
Siegel C.L., Symplectic geometry, \href{http://dx.doi.org/10.2307/2371774}{\textit{Amer.~J. Math.}} \textbf{65} (1943),
 1--86.

\bibitem{TA90}
Takase K., A note on automorphic forms, \href{http://dx.doi.org/10.1515/crll.1990.409.138}{\textit{J.~Reine Angew. Math.}}
 \textbf{409} (1990), 138--171.

\bibitem{TA92}
Takase K., On unitary representations of {J}acobi groups, \href{http://dx.doi.org/10.1515/crll.1992.430.139}{\textit{J.~Reine
 Angew. Math.}} \textbf{430} (1992), 139--149.

\bibitem{TA99}
Takase K., On {S}iegel modular forms of half-integral weights and {J}acobi
 forms, \href{http://dx.doi.org/10.1090/S0002-9947-99-02168-6}{\textit{Trans. Amer. Math. Soc.}} \textbf{351} (1999), 735--780.

\bibitem{GV}
Vinberg E.B., Gindikin S.G., K\"ahler manifolds admitting a transitive solvable
 group of automorphisms, \href{http://dx.doi.org/10.1070/SM1967v003n03ABEH002746}{\textit{Math. USSR Sb.}} \textbf{3} (1967), 333--351.

\bibitem{Y02}
Yang J.-H., The method of orbits for real {L}ie groups, \textit{Kyungpook
 Math.~J.} \textbf{42} (2002), 199--272, \href{http://arxiv.org/abs/math.RT/0602056}{math.RT/0602056}.

\bibitem{Y07}
Yang J.-H., Invariant metrics and {L}aplacians on {S}iegel--{J}acobi space,
 \href{http://dx.doi.org/10.1016/j.jnt.2006.12.014}{\textit{J.~Number Theory}} \textbf{127} (2007), 83--102,
 \href{http://arxiv.org/abs/math.NT/0507215}{math.NT/0507215}.

\bibitem{Y08}
Yang J.-H., A partial {C}ayley transform of {S}iegel--{J}acobi disk,
 \href{http://dx.doi.org/10.4134/JKMS.2008.45.3.781}{\textit{J.~Korean Math. Soc.}} \textbf{45} (2008), 781--794,
 \href{http://arxiv.org/abs/math.NT/0507216}{math.NT/0507216}.

\bibitem{Y10}
Yang J.-H., Invariant metrics and {L}aplacians on {S}iegel--{J}acobi disk,
 \href{http://dx.doi.org/10.1007/s11401-008-0348-7}{\textit{Chin. Ann. Math. Ser.~B}} \textbf{31} (2010), 85--100,
 \href{http://arxiv.org/abs/math.NT/0507217}{math.NT/0507217}.

\bibitem{jae}
Yang J.-H., Yong Y.-H., Huh S.-N., Shin J.-H., Min G.-H., Sectional survatures of
 the {S}iegel--{J}acobi space, \href{http://dx.doi.org/10.4134/BKMS.2013.50.3.787}{\textit{Bull. Korean Math. Soc.}} \textbf{50}
 (2013), 787--799.

\end{thebibliography}
\end{document}